\newcommand{\qedhere}{{}}
\newcommand{\vb}{\mathbf{b}}
\newcommand{\vg}{\mathbf{g}}
\newcommand{\vh}{\mathbf{h}}
\newcommand{\vx}{\mathbf{x}}
\newcommand{\vy}{\mathbf{y}}
\newcommand{\vs}{\mathbf{s}}
\newcommand{\vt}{\mathbf{t}}
\newcommand{\vu}{\mathbf{u}}
\newcommand{\vv}{\mathbf{v}}
\newcommand{\vw}{\mathbf{w}}
\newcommand{\vz}{\mathbf{z}}
\newcommand{\K}{\mathcal{K}}
\newcommand{\Ksc}{({\K}^*)^\circ}
\newcommand{\R}{\mathbb{R}}
\newcommand{\Rn}{\mathbb{R}^n}
\newcommand{\T}{\top}
\newcommand{\st}{\text{s.t.}}
\newtheorem{example}[theorem]{Example}
\newcommand{\TheTitle}{Interior-point algorithms with full Newton steps for nonsymmetric convex conic optimization}
\newcommand{\TheAuthors}{ D{\'a}vid PAPP and Anita VARGA}
\title{{\TheTitle}\thanks{\textbf{Funding:} This material is based upon work supported by the National Science Foundation under Grant No. DMS-1847865. This material is based upon work supported by the Air Force Office of Scientific Research under award number FA9550-23-1-0370.}}
\author{
    D{\'a}vid PAPP\thanks{North Carolina State University, Department of Mathematics. (ORCID: 0000-0003-4498-6417) Email: \email{dpapp@ncsu.edu}.}
\and
    Anita VARGA\thanks{North Carolina State University, Department of Mathematics. (ORCID: 0000-0001-7138-2921) Email: \email{avarga@ncsu.edu}.}
}
\begin{document}
\maketitle
\begin{abstract}
We design and analyze primal-dual, feasible interior-point algorithms (IPAs) employing full Newton steps to solve convex optimization problems in standard conic form. Unlike most nonsymmetric cone programming methods, the algorithms presented in this paper require only a logarithmically homogeneous self-concordant barrier (LHSCB) of the primal cone, but compute feasible and $\varepsilon$-optimal solutions to both the primal and dual problems in $\mathcal{O}(\sqrt\nu\log(1/\varepsilon))$ iterations, where $\nu$ is the barrier parameter of the LHSCB; this matches the best known theoretical iteration complexity of IPAs for both symmetric and nonsymmetric cone programming. The definition of the neighborhood of the central path and feasible starts ensure that the computed solutions are compatible with the dual certificates framework of (Davis and Papp, 2022). Several initialization strategies are discussed, including two-phase methods that can be applied if a strictly feasible primal solution is available, and one based on a homogeneous self-dual embedding that allows the rigorous detection of large feasible or optimal solutions. In a detailed study of a classic, notoriously difficult, polynomial optimization problem, we demonstrate that the methods are efficient and numerically reliable. Although the standard approach using semidefinite programming fails for this problem with the solvers we tried, the new IPAs compute highly accurate near-optimal solutions that can be certified to be near-optimal in exact arithmetic.
\end{abstract}

\begin{keywords}
Nonsymmetric conic optimization, Interior point algorithms, Feasible method, Polynomial complexity
\end{keywords}

\begin{MSCcodes}
90C51, 90C25, 90C23

\end{MSCcodes}

\section{Introduction}
\label{sec:intro}

We propose and analyze new interior-point algorithms (IPAs) for convex conic optimization problems, that is, problems in the form
\begin{equation} \label{eq:PD_problem}
\left.
\begin{aligned}
    \inf\;\, & \mathbf{c}^\top \mathbf{x} \\
    \st\  & A\mathbf{x} = \mathbf{b} \\
    & \mathbf{x} \in \mathcal{K}
\end{aligned}
\quad \right\} \quad \text{(P)}
\qquad \qquad \qquad \left.
\begin{aligned}
    \sup \;\, & \mathbf{b}^\top \mathbf{y} \\
    \st\;\; & A^\top \mathbf{y} + \mathbf{s} = \mathbf{c} \\
    & \mathbf{s} \in \mathcal{K}^*, \  \mathbf{y} \in \mathbb{R}^m \\
\end{aligned}
\quad \right\}  \quad\text{(D)}
\end{equation}
where $\mathbf{c} \in \mathbb{R}^n$ and $\mathbf{b} \in \mathbb{R}^m$ are given vectors, $A \in \mathbb{R}^{m \times n}$ is a given matrix with full row rank, and $\mathcal{K} \subseteq \mathbb{R}^n$ is a proper convex cone whose dual cone is denoted by $\mathcal{K}^*$. The only assumption we make about $\K$ is that its interior, denoted by $\K^\circ$, is the domain of a known logarithmically homogeneous self-concordant barrier function (LHSCB), see Appendix \ref{sec:lhscb}, whose gradient and Hessian are computable efficiently. The cone need not have any other desirable properties; in particular, we do not assume that $\K$ is symmetric, homogeneous, or hyperbolic. Nor do we make any assumptions about its dual.

The vast majority of the abundant literature on IPAs is focused on the special case when $\K=\K^*$ is a symmetric (self-dual and homogeneous) cone, also known in the earlier literature as a self-scaled cone \cite{Guler1996}; many algorithms for these problems can be seen as broad generalizations of algorithms originally developed for linear programming. \newpage For example, in \cite{NesterovTodd1997, Nesterov1999}, Nesterov and Todd proposed efficient IPAs for self-scaled cones, based on the existence of Nesterov-Todd scaling points; Schmieta and Alizadeh used the machinery of Jordan algebras to extend IPAs for linear optimization problems to second-order cone and semidefinite optimization problems \cite{SchmietaAlizadeh2003}.
Symmetric conic optimization has also enjoyed excellent software support for decades \cite{sedumi,mosek10,sdpt3,csdp, dsdp,sdpa}, with novel applications (e.g., semidefinite programs with low-rank solutions) inspiring innovative approaches even today \cite{loraine}.

In recent years, there has been a growing interest in IPAs for more general, \emph{nonsymmetric}, conic optimization \cite{Nesterov2012}. This is partly due to models in contemporary applications that cannot be formulated as symmetric conic programs, such as the applications of the exponential cone and the relative entropy cone \cite{ChandrasekaranShah2016} in quantum information theory \cite{HeSaundersonFawzi2024}, entropy maximization problems, and risk-averse portfolio optimization \cite{AhmadiJavid2012}. It has also been established that some problems that are known to be representable as second-order cone or semidefinite optimization problems can be more efficiently handled when formulated and solved as nonsymmetric conic optimization problems; $\ell_p$ cones \cite{SkajaaYe2015} and sums-of-squares cones \cite{PappYildiz2019} are but two examples; \cite{CoeyKapelevichVielma2022} is a compendium of additional ones.

The primary difficulty in further extending symmetric cone programming approaches, especially primal-dual IPAs, to nonsymmetric conic programs is that the previously identical primal and dual cones are now replaced by two different cones, one of which might be difficult to handle. For example, $\K^\circ$ might be the domain of an easily computable LHSCB, but the same might not hold for $\Ksc$, or the known barrier for the dual cone may not be the (Fenchel) conjugate of the primal barrier, which precludes the use of certain methods. For instance, the IPAs proposed by Nesterov et al.~in \cite{Nesterov1999} require both the primal and dual LHSCBs, and the size of the linear systems to be solved is doubled compared to the symmetric case. Similarly, the method in \cite{Nesterov2012} is a corrector-predictor type IPA for nonsymmetric conic optimization that uses information on the dual barrier function in its predictor steps.

Skajaa and Ye \cite{SkajaaYe2015} proposed a primal-dual predictor-corrector IPA that solves a homogeneous self-dual embedding model for the simultaneous solution of (P) and (D), and it is one of the few methods that make no assumptions on the dual cone. Their original manuscript contained inaccuracies in certain proofs, which were later addressed by Papp and Y{\i}ld{\i}z \cite{PappYildiz2017}. This algorithm is the basis of the IPAs implemented in the Matlab package alfonso \cite{PappYildiz2022} and the Julia solver Hypatia \cite{CoeyKapelevichVielma2023}.

A specialized IPA for sums-of-squares cones, which can be generalized word-for-word to spectrahedral cones, was proposed in \cite{DavisPapp2022}; the authors also show that for some spectrahedral cones this method is strictly more efficient than solving the conventional semidefinite programming formulations, although the iteration complexity of the method is somewhat higher than typical, $\mathcal{O}(\nu)$ instead of $\mathcal{O}(\sqrt\nu)$, where $\nu$ is the barrier parameter of the LHSCB used in the algorithm.

Yet another recent approach to handling difficult dual cones is the ``domain-driven'' approach by Karimi and Tun\c{c}el \cite{KarimiTuncel2024}.

Serrano introduced short-step potential reduction methods in his thesis \cite{Serrano2015}, and these algorithms were implemented in the ECOS solver \cite{DomahidiChuBoyd2013}; however, this is specialized to the second-order cone and the exponential cone. Mosek \cite{mosek10} is a highly capable commercial solver that can solve problems over the exponential cone and power cones (in addition to symmetric cones).

Tun{\c{c}}el et al.~\cite{Tuncel2001,MyklebustTuncel2014} extended the concept of Nesterov-Todd directions from symmetric cones to the non-symmetric case. These ideas were combined with the analysis of Skajaa and Ye by Dahl and Andersen to propose an IPA for exponential cone optimization in \cite{DahlAndersen2022}. The authors did not provide the theoretical convergence and complexity results, however, they reported that the IPA performed well in practice compared to ECOS. Based on the main ideas of the IPA in \cite{DahlAndersen2022}, Badenbroek and Dahl \cite{BadenbroekDahl2022} proposed an algorithm for nonsymmetric conic optimization. They used a homogeneous self-dual embedding model and proved that the IPA has polynomial complexity, although, once again, the iteration complexity is $\mathcal{O}(\nu)$ rather than $\mathcal{O}(\sqrt\nu)$.

\textbf{Our contributions.}
We introduce and analyze feasible, full-step, primal-dual IPAs that generate iterates in a narrow neighborhood of the central path. Unlike most of the methods cited above, they are applicable to general cones with a known LHSCB, and they generate $\varepsilon$-optimal primal-dual pairs without any assumptions on the dual cone. They have $\mathcal{O}(\sqrt\nu\log(1/\varepsilon))$ iteration complexity, where $\nu$ is the barrier parameter of the LHSCB of $\K$; this matches the iteration complexity of commonly used IPAs for symmetric cones. (This is a strict improvement, for instance, over the algorithms in \cite{DavisPapp2022} and \cite{BadenbroekDahl2022}.)

To address cases where no feasible starting solution is known, we show that the proposed methods can be applied in conjunction with the homogeneous self-dual embedding technique found in, e.g., \cite{deKlerkRoosTerlaky1997, deKlerk1998, Luo2000}.

We also outline two different two-phase approaches that avoid homogenization for applications in which a primal feasible point is known, but we do not have a well-centered primal-dual pair to initialize our methods with. This is also desirable, since in most IPA literature relying on self-dual embeddings, it is an unadressed (and seemingly fundamentally complicated) issue that the homogenizing variables may tend to very small positive values (not \emph{a priori} bounded away from zero), meaning that there is a theoretical gap between computing an ``$\varepsilon$-feasible and $\varepsilon$-optimal'' solution to the embedding problem and computing an ``$\varepsilon$-feasible and $\varepsilon$-optimal'' solution to the original problem \eqref{eq:PD_problem}. (The former may be far from being the latter.)

\textbf{Short-step methods and dual feasibility certificates.} Our methods are based on neighborhoods constructed from Dikin ellipsoids (often considered ``short-step'' methods); see Eq.~\eqref{eq:nbd}. Although for symmetric cone programming, primal-dual predictor-corrector methods operating in wider neighborhoods appear to be the most practical, and those are the ones used in most general-purpose solvers,
direct generalizations of these approaches seem to be exceedingly difficult due to the aforementioned reasons.

Feasible methods operating in our small neighborhood have additional advantages beyond the good theoretical complexity and the relative simplicity of analysis. One of our motivating applications is the aforementioned optimization over low-dimensional linear images of ``nice'' (e.g., symmetric) high-dimensional cones such as sums-of-squares (SOS) cones, which are low-dimensional images of the positive semidefinite cone. (In other words, these are the dual cones of spectrahedral cones.) The concept of \emph{dual certificates} introduced in \cite{DavisPapp2022} allows the rigorous certification of membership in such cones in lower time and substantially lower memory than what is required to construct an explicit SOS certificate. As we shall detail in Section \ref{sec:SOS-application}, the methods proposed in this paper automatically compute dual feasibility certificates in the sense of \cite{DavisPapp2022}, and they do so at a lower time complexity than the algorithm proposed in that paper. The same conclusion would not hold if we were working with a different neighborhood definition.

Our simple feasible IPA (Algorithm \ref{alg:simple_short_step}) is presented and analyzed in detail in Section \ref{sec:full_step}, along with its adaptive update variants, Algorithm \ref{alg:largest_update} and \mbox{Algorithm \ref{alg:real_largest_update}}. Some of the background material on barrier functions necessary for the analysis is summarized in Appendix~\ref{sec:lhscb}. We then discuss various initialization strategies in \mbox{Section \ref{sec:init}}. Finally, in \mbox{Section \ref{sec:SOS-application}}, we present a detailed numerical example demonstrating the numerical robustness and efficiency of the method.

\section{The full-step IPAs and their analysis}
\label{sec:full_step}

All variants of our algorithm are path-following IPAs that generate iterates in the same neighborhood. We start by introducing the necessary notation and then define the neighborhood of the central path.

Let $\mathcal{F}$ denote the set of primal-dual feasible solutions:
\[
\mathcal{F} = \left\{ (\mathbf{x}, \mathbf{y}, \mathbf{s})\in\mathcal{K}\times\mathbb{R}^m\times\mathcal{K}^* \mid A\mathbf{x} = \mathbf{b}, \ A^{\top}\mathbf{y} + \mathbf{s} = \mathbf{c} \right\},
\]
and let $\mathcal{F}^{\circ}$ be the set of strictly feasible solutions:
\[
\mathcal{F}^{\circ} = \left\{ (\mathbf{x}, \mathbf{y}, \mathbf{s})\in\mathcal{K}^\circ\times\mathbb{R}^m\times(\mathcal{K}^*)^\circ \mid A\mathbf{x} = \mathbf{b}, \ A^{\top}\mathbf{y} + \mathbf{s} = \mathbf{c} \right\}.
\]
Suppose that the interior point (aka.~Slater) condition is satisfied: $\mathcal{F}^{\circ} \neq \emptyset$. Then strong duality holds, and the optimality conditions can be formulated as
\begin{equation}
\left.
\begin{aligned}
(\vx,\vy,\vs)&\in\mathcal{F}\\
\vx^\T\vs&=0
\end{aligned} \right\}. \tag{OC}\label{eq:OC}
\end{equation}

We assume that an LHSCB $f: \mathcal{K}^{\circ} \rightarrow \mathbb{R}$ with barrier parameter $\nu$ is given for the primal cone $\mathcal{K}$. Let $g(\mathbf{x})$ and $H(\mathbf{x})$ denote the gradient and Hessian of $f$ at $\mathbf{x} \in \mathcal{K}^{\circ}$, respectively. With this notation in place, we define the central path problem, parametrized by the positive scalar $\tau$, called the \emph{central path parameter}, as follows:
\begin{equation}
\left.
\begin{aligned}
(\vx,\vy,\vs)&\in\mathcal{F}^\circ\\
    \mathbf{s} + \tau g(\mathbf{x}) \ & = \mathbf{0} \\
\end{aligned} \right\}. \tag{CP}\label{eq:CP}
\end{equation}
For every $\tau>0$, the solution of \eqref{eq:CP} is unique; these solutions form the \emph{primal-dual central path}. As $\tau$ tends to $0$, the central path approaches a solution of \eqref{eq:OC}.

In the analysis,  we consider the following \emph{neighborhood} of the central path:
\begin{equation}
\label{eq:nbd}
\mathcal{N}(\eta,\tau) = \left\{ (\mathbf{x}, \mathbf{y}, \mathbf{s}) \in \mathcal{F}^\circ : \| \mathbf{s} + \tau g(\mathbf{x}) \|_{\mathbf{x}}^* \leq \eta \tau \right\},
\end{equation}
where $\eta$ is the radius of the neighborhood, and $\tau$ is the central path parameter; the choice of $\eta$ will be discussed later. This definition can be considered the generalization of the classic 2-norm neighborhood for linear optimization. Note the small but important departure from how a similar neighborhood is most commonly defined in the literature: it is customary to use the \emph{normalized duality gap} $\mu := \frac{\mathbf{x}^{\top} \mathbf{s}}{\nu}$ in place of the parameter $\tau$ (for example \cite{Nesterov1994,Serrano2015,SkajaaYe2015}); thus, our variant generates iterates in a slightly wider neighborhood of the central path. The modified definition applied here was also examined in \cite{NesterovTodd1998} in the context of self-scaled cones.

Given an $(\mathbf{x},\mathbf{y},\mathbf{s})\in\mathcal{F}^\circ$, the \emph{Newton system} associated with the central path \eqref{eq:CP} is
\begin{equation}
\left.
\begin{aligned}
    A \Delta \mathbf{x} & = \mathbf{0} \\
    A^{\top} \Delta \mathbf{y} + \Delta \mathbf{s} & = \mathbf{0} \\
    \tau H(\mathbf{x}) \Delta \mathbf{x} + \Delta \mathbf{s} & = -(\mathbf{s} + \tau g(\mathbf{x}))
\end{aligned} \right\} .
\tag{NS}\label{eq:NS}
\end{equation}
Its solution ($\Delta \mathbf{x}$, $\Delta \mathbf{y}$, $\Delta \mathbf{s}$)
is called the \emph{Newton direction} at $(\vx,\vy,\vs)$.

We start with our basic method, which takes full Newton steps and reduces the central path parameter $\tau$ by a fixed fraction every iteration. The pseudo-code of the algorithm is shown in Algorithm \ref{alg:simple_short_step}. It requires an initial primal-dual iterate near the central path; Theorem \ref{theorem:compl1} prescribes parameter values for which the convergence of the algorithm (with a suitable initial iterate) is guaranteed, and in Section \ref{sec:init} we discuss different strategies (two-phase variants, embeddings, etc.) for when a well-centered initial iterate is not readily available.

\begin{algorithm}[h]
\caption{A full-step IPA for nonsymmetric cone programming}
\label{alg:simple_short_step}
\begin{algorithmic}
\vspace{0.5em}
\STATE \textbf{inputs:}
$A \in \mathbb{R}^{m \times n}$ with full row rank; $\mathbf{b} \in \mathbb{R}^m$;  $\mathbf{c} \in \mathbb{R}^n$;
\\ \hspace{1em}an oracle for computing the Hessian $H$ of the $\nu$-LHSCB $f: \ \mathcal{K}^{\circ} \rightarrow \mathbb{R}$;
\\ \hspace{1em}an initial $\tau_0>0$ and
$(\mathbf{x}_0, \mathbf{y}_0, \mathbf{s}_0) \in \mathcal{F}^\circ$ such that $\| \mathbf{s}_0 + \tau_0 g(\mathbf{x}_0) \|_{\mathbf{x}_0}^* \leq \eta \tau_0$;
\\ \hspace{1em}$\varepsilon > 0$ optimality tolerance.

\vspace{0.5em}
\STATE \textbf{parameters:}
neighborhood parameter $\eta \in (0, 1)$;
update parameter $\vartheta$.
\vspace{0.5em}

\STATE \textbf{output:} a triplet $(\vx,\vy,\vs)\in\mathcal{F}^\circ$ satisfying $\vx^\T\vs\leq\varepsilon$.

\vspace{1em}
\STATE Initialize $\mathbf{x} := \mathbf{x}_0$, $\mathbf{y} := \mathbf{y}_0$, $\mathbf{s} := \mathbf{s}_0$, $\tau := \tau_0$.
\vspace{0.5em}
\WHILE{$\mathbf{x}^\top \mathbf{s} > \varepsilon$}
    \STATE Solve \eqref{eq:NS} for $(\Delta \mathbf{x}, \Delta \mathbf{y}, \Delta \mathbf{s})$.
    \STATE Set $\mathbf{x} := \mathbf{x} + \Delta \mathbf{x}$, $\mathbf{y} := \mathbf{y} + \Delta \mathbf{y}$, $\mathbf{s} := \mathbf{s} + \Delta \mathbf{s}$.
    \STATE Set $\tau := (1 - \vartheta) \tau$.
\ENDWHILE
\vspace{0.5em}
\STATE \textbf{return} $(\mathbf{x}, \mathbf{y}, \mathbf{s})$.
\vspace{0.5em}
\end{algorithmic}
\end{algorithm}

The Newton system \eqref{eq:NS} is identical to the system used for determining the corrector directions in \cite{SkajaaYe2015} and \cite{PappYildiz2017}, and parts of their analyses are also applicable here. Similar bounds were also used in \cite{Serrano2015}. To keep the paper self-contained, we restate these results briefly as part of Lemma \ref{lemma:sd_properties} below. The main difference in our analysis (throughout the paper) is that we do not use the (normalized) duality gap $\mu$ as the central path parameter, as is customary in most short-step IPAs. Instead, a separate parameter $\tau$ is used, which in Algorithm \ref{alg:simple_short_step} is prescribed to converge geometrically to zero. Therefore, we have to establish separately that the duality gap also converges to $0$ and at what rate; we do so in Lemma \ref{lemma:DG}.

The following lemma summarizes some elementary results from the literature regarding the Newton directions.

\begin{lemma}[\cite{PappYildiz2017, Serrano2015,SkajaaYe2015}]\label{lemma:sd_properties}
Let $(\mathbf{x}, \mathbf{y}, \mathbf{s}) \in \mathcal{N}(\eta, \tau)$ and let $(\Delta \mathbf{x}, \Delta \mathbf{y}, \Delta \mathbf{s})$ be the solution of \eqref{eq:NS}. Then
\begin{enumerate}[label=(\roman*)]
    \item  $\Delta \mathbf{x}^{\top} \Delta \mathbf{s} = 0$, \label{sd_prop:first}
    \item $
\| \Delta \mathbf{x} \|_{\mathbf{x}} \leq \eta$ and $\| \Delta \mathbf{s} \|_{\mathbf{x}}^* \leq \eta \tau
$,  \label{sd_prop:second}
    \item $\| \mathbf{s} + \tau g(\mathbf{x}) + \Delta \mathbf{s} \|_{\mathbf{x}}^* \leq \eta \tau$. \label{sd_prop:third}
\end{enumerate}
\end{lemma}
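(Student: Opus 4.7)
The plan is to prove the three items in order, with (i) feeding into (ii) via a Pythagorean-style identity, and (ii) feeding directly into (iii).

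For \ref{sd_prop:first}, I would exploit the first two rows of \eqref{eq:NS}: the second row gives $\Delta\vs = -A^\top\Delta\vy$, and the first row gives $A\Delta\vx = \vzero$, so $\Delta\vx^\top\Delta\vs = -(A\Delta\vx)^\top\Delta\vy = 0$. This is the standard primal-dual orthogonality for affine-scaling-type Newton systems and needs nothing beyond the linear part of \eqref{eq:NS}.

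For \ref{sd_prop:second}, the idea is to take the local dual norm of both sides of the third equation of \eqref{eq:NS}. Since the right-hand side has $\|\cdot\|_\vx^*$ bounded by $\eta\tau$ by the assumption $(\vx,\vy,\vs)\in\mathcal{N}(\eta,\tau)$, and using the identity $\|H(\vx)\vu\|_\vx^{*\,2} = \vu^\top H(\vx)\vu = \|\vu\|_\vx^2$, I would expand
\[
\bigl\|\Delta\vs + \tau H(\vx)\Delta\vx\bigr\|_\vx^{*\,2} = \|\Delta\vs\|_\vx^{*\,2} + 2\tau\,\Delta\vs^\top\Delta\vx + \tau^2\|\Delta\vx\|_\vx^2.
\]
The cross term vanishes by \ref{sd_prop:first}, so the left side equals $\|\Delta\vs\|_\vx^{*\,2} + \tau^2\|\Delta\vx\|_\vx^2$ and is at most $\eta^2\tau^2$. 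Both bounds in \ref{sd_prop:second} drop out immediately from this single Pythagorean inequality, which is the cleanest way I see to do it.

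For \ref{sd_prop:third}, I would simply rearrange the third equation of \eqref{eq:NS} to get $\vs + \tau g(\vx) + \Delta\vs = -\tau H(\vx)\Delta\vx$, then take local dual norms and use the identity $\|H(\vx)\Delta\vx\|_\vx^* = \|\Delta\vx\|_\vx$ together with the bound $\|\Delta\vx\|_\vx\le\eta$ from \ref{sd_prop:second}. This gives the required $\eta\tau$ bound on the nose.

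No step looks genuinely difficult here; the only subtle bit is remembering to eliminate the cross term via \ref{sd_prop:first} in the Pythagorean expansion, which is what makes the bound in \ref{sd_prop:second} come out as $\eta\tau$ rather than a factor-two-larger quantity one would get from a naive triangle inequality. The argument is essentially the one used in the predictor–corrector analyses of \cite{SkajaaYe2015} and \cite{PappYildiz2017}, adapted to the parameter $\tau$ rather than the normalized duality gap $\mu$, but since the Newton system has the same structure the derivation transfers verbatim.
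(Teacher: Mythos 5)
Your proposal is correct and follows essentially the same route as the paper's proof: orthogonality from the first two Newton equations, the Pythagorean expansion of the dual norm of the third equation (with the cross term killed by part (i)) to get both bounds in (ii), and a direct rearrangement of the third equation plus $\|H(\mathbf{x})\Delta\mathbf{x}\|_{\mathbf{x}}^* = \|\Delta\mathbf{x}\|_{\mathbf{x}}$ for (iii). No differences worth noting.
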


\begin{proof}
    \begin{enumerate}[label=(\roman*)]
        \item Using the first two equations of \eqref{eq:NS}, we get
    \[ 0 = \Delta \mathbf{x}^{\top} (A^{\top} \Delta \mathbf{y} + \Delta \mathbf{s}) = \Delta \mathbf{y}^{\top} A \Delta \mathbf{x} + \Delta \mathbf{x}^{\top} \Delta \mathbf{s} = \Delta \mathbf{x}^{\top} \Delta \mathbf{s}. \]
        \item Using the neighborhood definition, the orthogonality of $\Delta \mathbf{x}$ and $\Delta \mathbf{s}$, and the third equality of the Newton system, we can derive the upper bounds the following way:
\[ \begin{aligned}
\eta^2 \tau^2 &\geq \left( \| \mathbf{s} + \tau g(\mathbf{x}) \|_{\mathbf{x}}^* \right)^2 = \left( \| \tau H(\mathbf{x}) \Delta \mathbf{x} + \Delta \mathbf{s} \|_{\mathbf{x}}^* \right)^2 \\
&= \tau^2 \Delta \mathbf{x}^\top H(\mathbf{x}) \Delta \mathbf{x} + \Delta \mathbf{s}^\top H(\mathbf{x})^{-1} \Delta \mathbf{s}
= \tau^2 \| \Delta \mathbf{x} \|_{\mathbf{x}}^2 + (\| \Delta \mathbf{s} \|_{\mathbf{x}}^*)^2.
\end{aligned}
\]

As both terms on the right-hand side are nonnegative, it follows that $\| \Delta \mathbf{x} \|_{\mathbf{x}} \leq \eta$ and $\| \Delta \mathbf{s} \|_{\mathbf{x}}^* \leq \eta \tau$.
    \item Using the third equation of \eqref{eq:NS} and the upper bound on $\|\Delta \mathbf{x}\|_{\mathbf{x}}$ from \ref{sd_prop:second}, we get
    \[
\| \mathbf{s} + \tau g(\mathbf{x}) + \Delta \mathbf{s} \|_{\mathbf{x}}^* = \| \tau H(\mathbf{x}) \Delta \mathbf{x} \|_{\mathbf{x}}^*
= \tau \| \Delta \mathbf{x} \|_{\mathbf{x}} \leq \eta \tau.
  \]

    \end{enumerate}
\end{proof}

In our next Lemma, we show that with a suitable choice of parameter values, the iterate after a full Newton step remains in the required neighborhood $\mathcal{N}(\eta,\tau^+)$ for our chosen $\tau^+ = (1-\vartheta)\tau$.

\begin{lemma}
\label{lemma:zplus-in-Nthetaplus}
Let $(\mathbf{x}, \mathbf{y}, \mathbf{s}) \in \mathcal{N}(\eta, \tau)$, let $(\Delta \mathbf{x}, \Delta \mathbf{y}, \Delta \mathbf{s})$ be the solution of \eqref{eq:NS}, and let $(\mathbf{x}^+, \mathbf{y}^+, \mathbf{s}^+) = (\mathbf{x}, \mathbf{y}, \mathbf{s}) + (\Delta \mathbf{x}, \Delta \mathbf{y}, \Delta \mathbf{s})$. If $0 < \eta \leq \frac{1}{4}$ and $
\vartheta = \frac{\eta / 2}{\sqrt{\nu} + 1}$,
then the new point satisfies $(\mathbf{x}^+, \mathbf{y}^+, \mathbf{s}^+) \in \mathcal{N}(\eta, \tau^+)$ with $\tau^+ = (1 - \vartheta) \tau$.
\end{lemma}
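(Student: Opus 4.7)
The plan is to verify the three defining properties of $\mathcal{N}(\eta,\tau^+)$ for the point $(\mathbf{x}^+,\mathbf{y}^+,\mathbf{s}^+)$: (a) the linear feasibility $A\mathbf{x}^+ = \mathbf{b}$ and $A^\top\mathbf{y}^+ + \mathbf{s}^+ = \mathbf{c}$; (b) the interior memberships $\mathbf{x}^+\in\K^\circ$ and $\mathbf{s}^+\in\Ksc$; and (c) the centering inequality $\|\mathbf{s}^+ + \tau^+ g(\mathbf{x}^+)\|_{\mathbf{x}^+}^* \leq \eta\tau^+$. Part (a) is immediate from the first two equations of \eqref{eq:NS}. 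For (b), the bound $\|\Delta\mathbf{x}\|_\mathbf{x}\leq\eta\leq 1/4<1$ from Lemma~\ref{lemma:sd_properties}\ref{sd_prop:second} places $\mathbf{x}^+$ in the primal Dikin ball around $\mathbf{x}$ and hence in $\K^\circ$, while Lemma~\ref{lemma:sd_properties}\ref{sd_prop:third} combined with the dual analogue for LHSCBs (that the ball $\{\mathbf{s}:\|\mathbf{s}+\tau g(\mathbf{x})\|_\mathbf{x}^*<\tau\}$ lies in $\Ksc$; see Appendix~\ref{sec:lhscb}) delivers $\mathbf{s}^+\in\Ksc$.

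The substance of the proof is part (c). My first step would be to simplify the quantity to be bounded by substituting the third Newton equation. Since $\mathbf{s}^+ = \mathbf{s}+\Delta\mathbf{s} = -\tau g(\mathbf{x}) - \tau H(\mathbf{x})\Delta\mathbf{x}$, the terms involving $\mathbf{s}$ and $g(\mathbf{x})$ cancel and one obtains the clean identity
\begin{equation*}
    \mathbf{s}^+ + \tau g(\mathbf{x}^+) \;=\; \tau\bigl( g(\mathbf{x}^+) - g(\mathbf{x}) - H(\mathbf{x})\Delta\mathbf{x} \bigr),
\end{equation*}
so that
\begin{equation*}
    \mathbf{s}^+ + \tau^+ g(\mathbf{x}^+) \;=\; \tau\bigl( g(\mathbf{x}^+) - g(\mathbf{x}) - H(\mathbf{x})\Delta\mathbf{x} \bigr) \;-\; \vartheta\tau\, g(\mathbf{x}^+).
\end{equation*}
A triangle inequality in $\|\cdot\|_{\mathbf{x}^+}^*$ then splits the estimate into two essentially independent pieces.

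The first piece is the standard Newton residual: self-concordance theory (cf.\ Appendix~\ref{sec:lhscb}) gives $\|g(\mathbf{x}^+)-g(\mathbf{x})-H(\mathbf{x})\Delta\mathbf{x}\|_\mathbf{x}^* \leq r^2/(1-r)$ for $r := \|\Delta\mathbf{x}\|_\mathbf{x} < 1$, and the Hessian equivalence $\|\cdot\|_{\mathbf{x}^+}^* \leq (1-r)^{-1}\|\cdot\|_\mathbf{x}^*$ converts this into a bound of shape $\tau r^2/(1-r)^2$, with $r\leq\eta$ by Lemma~\ref{lemma:sd_properties}\ref{sd_prop:second}. The second piece uses the classical LHSCB identity $\|g(\mathbf{x}^+)\|_{\mathbf{x}^+}^* = \sqrt{\nu}$. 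Combining the two reduces the target inequality $\|\mathbf{s}^+ + \tau^+ g(\mathbf{x}^+)\|_{\mathbf{x}^+}^* \leq \eta(1-\vartheta)\tau$ to the scalar condition
\begin{equation*}
    \frac{\eta^2}{(1-\eta)^2} + \vartheta\sqrt{\nu} \;\leq\; \eta(1-\vartheta).
\end{equation*}

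The main obstacle — and really the only calculation that deserves care — is verifying this scalar inequality for the prescribed $\eta\leq 1/4$ and $\vartheta = \frac{\eta/2}{\sqrt\nu+1}$. The choice of $\vartheta$ is calibrated so that $\vartheta\sqrt\nu \leq \eta/2$ uniformly in $\nu$, while $\eta\leq 1/4$ forces $\eta^2/(1-\eta)^2 \leq 1/9$, which leaves enough slack on the right to absorb both $\eta/2$ and the residual $\eta\vartheta$. The $+1$ in the denominator of $\vartheta$ is precisely what keeps the step valid for every $\nu\geq 1$, and one can already anticipate that the resulting geometric decrease $\tau^+ = (1-\vartheta)\tau$ will yield the advertised $\mathcal{O}(\sqrt\nu\log(1/\varepsilon))$ iteration complexity.
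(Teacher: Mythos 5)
Your proof is correct and follows essentially the same route as the paper's: both split $\mathbf{s}^+ + \tau^+ g(\mathbf{x}^+)$ by the triangle inequality into a Newton-residual term bounded by $\tau\eta^2/(1-\eta)^2$ and a $\vartheta\tau\sqrt{\nu}$ term via \ref{prop:lhscb_norm}, and reduce to the same scalar inequality (yours is the paper's multiplied through by $1-\vartheta$, and it does hold for $\eta\le 1/4$ since $\vartheta(\sqrt\nu+\eta)\le\eta/2$ and $\eta^2/(1-\eta)^2\le\eta/2$ whenever $\eta\le 2-\sqrt3$). The only cosmetic difference is that you obtain the $\eta^2/(1-\eta)^2$ bound directly from \ref{prop:scb_norm2} and \ref{prop:scb_norm1}, whereas the paper packages the same estimate through the Newton-contraction property \ref{prop:scb_NS} applied to an auxiliary function $\varphi$; your route is in fact the one the paper uses for the damped-step variant in Appendix \ref{sec:alg3_damped}.
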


\begin{proof}
    Let us define the function $\varphi : \mathcal{K}^\circ \to \mathbb{R}$:
\[
\varphi(\mathbf{v}) = \frac{1}{\tau} (\mathbf{s} + \Delta \mathbf{s})^\top \mathbf{v} + f(\mathbf{v}),
\]
where $f$ is the known LHSCB for $\mathcal{K}^\circ$, with gradient $g$ and Hessian $H$.

Then $\varphi$ is self-concordant, and its Hessian is also $H$.
Let $\mathbf{n}_{\varphi}(\mathbf{v})$ denote the (unconstrained) Newton step for $\varphi$ at $\mathbf{v} \in \mathcal{K}^\circ$:
\[
\mathbf{n}_{\varphi}(\mathbf{v}) = -\tau^{-1} H(\mathbf{v})^{-1} \left( \mathbf{s} + \Delta \mathbf{s} + \tau g(\mathbf{v}) \right).
\]
Thus,
\[
\mathbf{n}_{\varphi}(\mathbf{x}) = -\tau^{-1} H(\mathbf{x})^{-1} \left( \mathbf{s} + \Delta \mathbf{s} + \tau g(\mathbf{x}) \right) = \Delta \mathbf{x},
\]
and therefore,
\[
\mathbf{x}^+ = \mathbf{x} + \Delta \mathbf{x} = \mathbf{x} + \mathbf{n}_{\varphi}(\mathbf{x}).
\]

For any $\mathbf{v} \in \mathcal{K}^\circ$,
\[
\|\mathbf{n}_{\varphi}(\mathbf{v})\|_{\mathbf{v}} = \left\| \tau^{-1} H(\mathbf{v})^{-1} \left( \mathbf{s} + \Delta \mathbf{s} + \tau g(\mathbf{v}) \right) \right\|_{\mathbf{v}}
= \tau^{-1} \|\mathbf{s} + \Delta \mathbf{s} + \tau g(\mathbf{v})\|_{\mathbf{v}}^*.
\]
For $\mathbf{v} = \mathbf{x}$, $\|\mathbf{n}_{\varphi}(\mathbf{x})\|_{\mathbf{x}} = \|\Delta \mathbf{x}\|_{\mathbf{x}} \leq \eta$, using Lemma \ref{lemma:sd_properties}, part \ref{sd_prop:second}. As $\eta < 1$, we can use \ref{prop:scb_NS}:
\begin{equation} \label{eq:UB_NS}
    \tau^{-1} \|\mathbf{s} + \Delta \mathbf{s} + \tau g(\mathbf{x}^+)\|^*_{\mathbf{x}^+} = \|\mathbf{n}_{\varphi}(\mathbf{x}^+)\|_{\mathbf{x}^+}
\leq \left( \frac{\|\mathbf{n}_{\varphi}(\mathbf{x})\|_{\mathbf{x}}}{1 - \|\mathbf{n}_{\varphi}(\mathbf{x})\|_{\mathbf{x}}} \right)^2 \leq \frac{\eta^2}{(1 - \eta)^2}.
\end{equation}

Thus,
\begin{align*}
\frac{1}{\tau^+} \left\| \mathbf{s}^+ + \tau^+ g(\mathbf{x}^+) \right\|_{\mathbf{x}^+}^*
&= \frac{1}{1 - \vartheta} \tau^{-1} \| \mathbf{s} + \Delta \mathbf{s} + \tau^+ g(\mathbf{x}^+) \|_{\mathbf{x}^+}^* \\
&\leq \frac{1}{1 - \vartheta} \tau^{-1} \| \mathbf{s} + \Delta \mathbf{s} + \tau g(\mathbf{x}^+)\|_{\mathbf{x}^+}^* + \frac{\vartheta}{1 - \vartheta} \| g(\mathbf{x}^+) \|^*_{\mathbf{x}^+} \\
&\stackrel{\eqref{eq:UB_NS},\ref{prop:lhscb_norm}}{\leq} \frac{1}{1 - \vartheta}  \frac{\eta^2}{(1 - \eta)^2} + \frac{\vartheta}{1 - \vartheta} \sqrt{\nu} \\
&= \frac{\sqrt{\nu} + 1}{\sqrt{\nu} + 1 - \eta/2} \cdot \frac{\eta^2}{(1 - \eta)^2} + \frac{\eta / 2}{\sqrt{\nu} + 1 - \eta/2} \sqrt{\nu} \\
&\leq \frac{2}{2 - \eta / 2} \cdot \frac{\eta^2}{(1 - \eta)^2} + \frac{\eta}{2}.
\end{align*}

The last expression is less than $\eta$ if $\eta \leq \frac{1}{4}$ holds.
\end{proof}

In our final Lemma of this section, we give bounds on the duality gap after each iteration to measure our progress toward optimality. This will help establish linear convergence and an $\mathcal{O} (\sqrt{\nu}\log(1/\varepsilon))$ iteration complexity.

\begin{lemma} \label{lemma:DG}
Let $(\mathbf{x}, \mathbf{y}, \mathbf{s}) \in \mathcal{N}(\eta, \tau)$ and let $(\Delta \mathbf{x}, \Delta \mathbf{y}, \Delta \mathbf{s})$ be the solution of \eqref{eq:NS}. We have the following lower and upper bounds on the duality gap after a full Newton step:
    \[ \tau(\nu - \eta^2) \leq (\mathbf{x}^+)^\top \mathbf{s}^+ \leq \tau \nu. \]
\end{lemma}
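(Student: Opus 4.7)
The plan is to obtain a closed-form expression for $(\mathbf{x}^+)^\top \mathbf{s}^+$ and then bound the remainder term via Lemma \ref{lemma:sd_properties}. First I would expand
\[
(\mathbf{x}^+)^\top \mathbf{s}^+ = \mathbf{x}^\top \mathbf{s} + \mathbf{x}^\top \Delta\mathbf{s} + \Delta\mathbf{x}^\top \mathbf{s} + \Delta\mathbf{x}^\top \Delta\mathbf{s},
\]
and immediately kill the last cross term using part \ref{sd_prop:first} of Lemma \ref{lemma:sd_properties}.

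Next I would compute the two mixed terms $\mathbf{x}^\top \Delta\mathbf{s}$ and $\Delta\mathbf{x}^\top \mathbf{s}$ by taking the inner product of the third equation of \eqref{eq:NS}, namely $\tau H(\mathbf{x})\Delta\mathbf{x} + \Delta\mathbf{s} = -(\mathbf{s} + \tau g(\mathbf{x}))$, with $\mathbf{x}$ and with $\Delta\mathbf{x}$ respectively. The key LHSCB identities needed here (collected in Appendix \ref{sec:lhscb}) are $H(\mathbf{x})\mathbf{x} = -g(\mathbf{x})$ and $\mathbf{x}^\top g(\mathbf{x}) = -\nu$. Testing against $\mathbf{x}$ yields
\[
\mathbf{x}^\top \Delta\mathbf{s} = -\mathbf{x}^\top\mathbf{s} + \tau\nu + \tau g(\mathbf{x})^\top \Delta\mathbf{x},
\]
while testing against $\Delta\mathbf{x}$ and using $\Delta\mathbf{x}^\top \Delta\mathbf{s}=0$ gives
\[
\Delta\mathbf{x}^\top \mathbf{s} = -\tau\|\Delta\mathbf{x}\|_{\mathbf{x}}^2 - \tau g(\mathbf{x})^\top \Delta\mathbf{x}.
\]
The two $\tau g(\mathbf{x})^\top \Delta\mathbf{x}$ contributions cancel when the pieces are assembled, which is the pleasant feature that makes the whole identity clean.

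Plugging back into the expansion, the $\mathbf{x}^\top\mathbf{s}$ terms cancel and I obtain the exact equality
\[
(\mathbf{x}^+)^\top \mathbf{s}^+ = \tau\nu - \tau \|\Delta\mathbf{x}\|_{\mathbf{x}}^2.
\]
The upper bound $(\mathbf{x}^+)^\top \mathbf{s}^+ \leq \tau\nu$ is then immediate since $\tau\|\Delta\mathbf{x}\|_{\mathbf{x}}^2 \geq 0$, and the lower bound follows from the estimate $\|\Delta\mathbf{x}\|_{\mathbf{x}} \leq \eta$ supplied by part \ref{sd_prop:second} of Lemma \ref{lemma:sd_properties}.

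There is no real obstacle here — the argument is essentially bookkeeping once one sees to use the logarithmic homogeneity identities. The only place to be careful is the sign management when testing the Newton equation against $\mathbf{x}$, since $\mathbf{x}^\top H(\mathbf{x}) = -g(\mathbf{x})^\top$ introduces a sign flip that needs to be tracked to see the cancellation of $g(\mathbf{x})^\top \Delta\mathbf{x}$ between the two inner products.
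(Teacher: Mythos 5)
Your proposal is correct and follows essentially the same route as the paper's own proof: expand the product, drop the cross term by orthogonality, evaluate $\mathbf{x}^\top\Delta\mathbf{s}$ and $\Delta\mathbf{x}^\top\mathbf{s}$ by testing the third Newton equation against $\mathbf{x}$ and $\Delta\mathbf{x}$ using \ref{prop:lhscb_Hessian} and \ref{prop:lhscb_gradient}, and observe the cancellation yielding $(\mathbf{x}^+)^\top\mathbf{s}^+ = \tau(\nu - \|\Delta\mathbf{x}\|_{\mathbf{x}}^2)$, after which part \ref{sd_prop:second} of Lemma \ref{lemma:sd_properties} gives both bounds. No gaps.
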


\begin{proof}
Using \ref{sd_prop:first} from Lemma \ref{lemma:sd_properties},
    \[ (\mathbf{x}^+)^\top \mathbf{s}^+ = (\mathbf{x} + \Delta \mathbf{x})^\top (\mathbf{s} + \Delta \mathbf{s}) = \mathbf{x}^\top \mathbf{s} + \mathbf{s}^\top \Delta \mathbf{x} + \mathbf{x}^\top \Delta \mathbf{s}. \]
From the third equation of \eqref{eq:NS}, we have
\begin{align*}
    \Delta \mathbf{x}^\top \mathbf{s} &= \Delta \mathbf{x}^\top \mathbf{s} + \Delta \mathbf{x}^\top \Delta \mathbf{s} \\
&= \Delta \mathbf{x}^\top \mathbf{s} - \Delta \mathbf{x}^\top (\mathbf{s} + \tau g(\mathbf{x}) + \tau H(\mathbf{x}) \Delta \mathbf{x}) \\
&= -\tau (\Delta \mathbf{x}^\top g(\mathbf{x}) + \|\Delta \mathbf{x}\|_{\mathbf{x}}^2).
\end{align*}
Similarly, we can express $\mathbf{x}^\top \Delta \mathbf{s}$ as
\begin{align*}
\mathbf{x}^\top \Delta \mathbf{s} &= - \mathbf{x}^\top \left( \mathbf{s} + \tau g(\mathbf{x}) + \tau H(\mathbf{x}) \Delta \mathbf{x} \right) \\
&\stackrel{\ref{prop:lhscb_Hessian},\ref{prop:lhscb_gradient}}{=} -\mathbf{x}^\top \mathbf{s} + \tau \nu + \tau \Delta \mathbf{x}^\top g(\mathbf{x}).
\end{align*}

Combining the previous equations, we can write $(\mathbf{x}^+)^\top \mathbf{s}^+$ as
\begin{align*}
(\mathbf{x}^+)^\top \mathbf{s}^+ &= \mathbf{x}^\top \mathbf{s} - \tau \Delta \mathbf{x}^\top g(\mathbf{x}) - \tau \|\Delta \mathbf{x}\|_{\mathbf{x}}^2 - \mathbf{x}^\top \mathbf{s} + \tau \nu + \tau \Delta \mathbf{x}^\top g(\mathbf{x}) \\
&= \tau (\nu -  \|\Delta \mathbf{x}\|_{\mathbf{x}}^2).
\end{align*}
Using part \ref{sd_prop:second} of Lemma \ref{lemma:sd_properties}, we get
\[ \tau(\nu-\eta^2) \leq (\mathbf{x}^+)^\top \mathbf{s}^+ \leq \tau\nu.  \]

\end{proof}

We are now ready to show that Algorithm \ref{alg:simple_short_step} produces an $\varepsilon$-optimal solution in $\mathcal{O} \left( \sqrt{\nu} \log {1/\varepsilon} \right)$ iterations.

\begin{theorem} \label{theorem:compl1}

If $0 < \eta \leq \frac{1}{4}$ and $
\vartheta = \frac{\eta / 2}{\sqrt{\nu} + 1}$, then Algorithm \ref{alg:simple_short_step} produces a feasible solution to \eqref{eq:PD_problem} with duality gap below $\varepsilon>0$ in no more than $\frac{2}{\eta}\left(\sqrt{\nu}+1 \right)  \ln \left(\frac{\tau_0 \nu}{\varepsilon}\right) +1$
iterations.

In particular, if the initial point satisfies $\tau_0 = \frac{\vx_0^\T\vs_0}{\nu}$, as in the initialization strategies discussed in Section \ref{sec:init}, then the required number of iterations is
\[\mathcal{O}\left(\sqrt\nu\ln\frac{\vx_0^\T\vs_0}{\varepsilon}\right).\]
\end{theorem}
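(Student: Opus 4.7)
The plan is to set up a straightforward inductive argument using the two preceding lemmas, then convert the geometric decay of $\tau$ into the claimed logarithmic iteration bound.

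First I would fix notation for the sequence of iterates and central path parameters, writing $\tau_k = (1-\vartheta)^k \tau_0$ and $(\vx_k,\vy_k,\vs_k)$ for the iterate at the end of iteration $k$. The initial iterate satisfies $(\vx_0,\vy_0,\vs_0)\in\mathcal{N}(\eta,\tau_0)$ by hypothesis, and Lemma \ref{lemma:zplus-in-Nthetaplus} applied inductively (its hypotheses on $\eta$ and $\vartheta$ are exactly the ones assumed in the theorem) yields $(\vx_k,\vy_k,\vs_k)\in\mathcal{N}(\eta,\tau_k)$ for every $k\ge 0$. In particular, all iterates stay in $\mathcal{F}^\circ$, so feasibility is automatic.

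Next I would apply Lemma \ref{lemma:DG} to iteration $k$ (which takes a Newton step at an iterate in $\mathcal{N}(\eta,\tau_{k-1})$). This gives $\vx_k^\T\vs_k \le \tau_{k-1}\nu = (1-\vartheta)^{k-1}\tau_0\nu$. Consequently the stopping criterion $\vx^\T\vs\le\varepsilon$ is met as soon as $(1-\vartheta)^{k-1}\tau_0\nu \le \varepsilon$, i.e.
\[
k - 1 \;\ge\; \frac{\ln(\tau_0\nu/\varepsilon)}{-\ln(1-\vartheta)}.
\]
Using the elementary inequality $-\ln(1-\vartheta)\ge\vartheta$ for $\vartheta\in(0,1)$, a sufficient condition is $k \ge 1 + \vartheta^{-1}\ln(\tau_0\nu/\varepsilon)$. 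Plugging in $\vartheta^{-1} = 2(\sqrt{\nu}+1)/\eta$ gives the claimed bound
\[
k \;\le\; \frac{2}{\eta}\bigl(\sqrt{\nu}+1\bigr)\,\ln\!\left(\frac{\tau_0\nu}{\varepsilon}\right) + 1.
\]

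For the ``In particular'' clause, I would simply substitute $\tau_0\nu = \vx_0^\T\vs_0$ into the bound just derived, absorb the constant $1/\eta$ and the additive $+1$ into the big-$\mathcal{O}$, obtaining the $\mathcal{O}\!\bigl(\sqrt{\nu}\ln(\vx_0^\T\vs_0/\varepsilon)\bigr)$ bound. There is no real obstacle here: the preceding two lemmas do all the structural work, and the remainder is a short calculation with the closed-form recursion $\tau_k=(1-\vartheta)^k\tau_0$ and the standard $\log(1-x)$ estimate.
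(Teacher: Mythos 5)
Your proposal is correct and follows essentially the same route as the paper's proof: bound $\vx_k^\T\vs_k$ by $(1-\vartheta)^{k-1}\tau_0\nu$ via Lemma \ref{lemma:DG}, then use $-\ln(1-\vartheta)\geq\vartheta$ to extract the iteration count. Your explicit inductive invocation of Lemma \ref{lemma:zplus-in-Nthetaplus} to keep all iterates in the neighborhood (and hence feasible) is a point the paper leaves implicit, but otherwise the arguments coincide.
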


\begin{proof}
    At the beginning of the $k^{\text{th}}$ iteration, $\tau = \tau_k = (1-\vartheta)^{k-1} \tau_0$. According to Lemma \ref{lemma:DG}, we have
    \[ \mathbf{x}_k^{\top} \mathbf{s}_k  \leq \tau_k \nu = (1-\vartheta)^{k-1} \tau_0 \nu. \]
    We would like to find the smallest $k$ for which  $(1-\vartheta)^{k-1} \tau_0 \nu \leq \varepsilon$ holds, which is equivalent to
    \[ (k-1) \ln (1-\vartheta) + \ln ( \tau_0 \nu ) \leq \ln \varepsilon.\]
    Using the inequality $-\ln(1 - \vartheta) \geq \vartheta$ for $\vartheta \in [0,1)$, we can require the fulfillment of the stronger inequality
    \[ -(k-1) \vartheta + \ln (\tau_0 \nu ) \leq \ln \varepsilon,\]
which is true if
\[
k \geq \frac{1}{\vartheta} \ln \left( \frac{\tau_0 \nu}{\varepsilon} \right) +1
= \frac{2}{\eta} \left( \sqrt{\nu} + 1 \right)  \ln \left(  \frac{\tau_0\nu}{\varepsilon} \right) +1.
\]

\end{proof}

\subsection{Adaptive update strategies}
\label{sec:improved-update-strategies}

In this section, we consider more efficient strategies for updating the parameter value $\tau$.

After taking a full Newton step, we can find the smallest $\tau^+$ for which the new point belongs to the neighborhood $\mathcal{N}(\eta,\tau^+)$, and set this $\tau^+$ as the updated value of $\tau$. (See Algorithm \ref{alg:largest_update}.) This update is only marginally more expensive than the simple update formula used in Algorithm \ref{alg:simple_short_step}: although it requires the computation of the Hessian of the barrier function at the new point, that Hessian is needed anyway for the next iteration. With this Hessian computed, the updated $\tau^+$ can be calculated by solving a scalar quadratic equation; it can be explicitly expressed as
\begin{equation} \label{eq:impr_update}
\tau^+ = \frac{1}{\nu - \eta^2} \left((\vx^+)^\T\vs^+-\sqrt{((\mathbf{x}^+)^{\top} \mathbf{s}^+)^2 - (\nu - \eta^2) \left( \| \mathbf{s}^+ \|^{*}_{\mathbf{x}_+} \right)^2}\right).
\end{equation}

It follows automatically that the complexity of Algorithm \ref{alg:largest_update} is no greater than the complexity of Algorithm \ref{alg:simple_short_step}, that is, Theorem \ref{theorem:compl1} applies verbatim to our second algorithm as well.

In \cite{DavisPapp2022}, the authors proposed an IPA (Algorithm 3.1) to determine (weighted) sums-of-squares certificates for positive polynomials with a similar update strategy. In our notation, the method of \cite{DavisPapp2022} defines the $\eta$-neighborhood of the central path as the set of strictly feasible solutions satisfying $\| \mathbf{s} + g(\mathbf{x}) \|_{\mathbf{x}}^* \leq \eta$, reminiscent but different from both our neighborhood definition and the customary one in the literature. The different neighborhood definition considered in this paper makes it possible to improve the complexity of the algorithm: the iteration complexity of the algorithm in \cite{DavisPapp2022} is $\mathcal{O}(\nu)$, while we have seen that our algorithms have an $\mathcal{O}(\sqrt{\nu})$ iteration complexity.

\begin{algorithm}[h]
\caption{Full-step IPA with a simple adaptive update strategy}
\label{alg:largest_update}
\begin{algorithmic}
\vspace{0.5em}
\STATE \textbf{inputs:}
$A \in \mathbb{R}^{m \times n}$ with full row rank; $\mathbf{b} \in \mathbb{R}^m$;  $\mathbf{c} \in \mathbb{R}^n$;
\\ \hspace{1em}an oracle for computing the Hessian $H$ of the $\nu$-LHSCB $f: \ \mathcal{K}^{\circ} \rightarrow \mathbb{R}$;
\\ \hspace{1em}an initial $\tau_0>0$ and
$(\mathbf{x}_0, \mathbf{y}_0, \mathbf{s}_0) \in \mathcal{F}^\circ$ such that $\| \mathbf{s}_0 + \tau_0 g(\mathbf{x}_0) \|_{\mathbf{x}_0}^* \leq \eta \tau_0$;
\\ \hspace{1em}$\varepsilon > 0$ optimality tolerance.

\vspace{0.5em}
\STATE \textbf{parameters:}
neighborhood parameter $\eta \in (0, 1)$;
update parameter $\vartheta$.
\vspace{0.5em}

\STATE \textbf{output:} a triplet $(\vx,\vy,\vs)\in\mathcal{F}^\circ$ satisfying $\vx^\T\vs\leq\varepsilon$.

\vspace{1em}
\STATE Initialize $\mathbf{x} := \mathbf{x}_0$, $\mathbf{y} := \mathbf{y}_0$, $\mathbf{s} := \mathbf{s}_0$, $\tau := \tau_0$
\vspace{0.5em}
\WHILE{$\mathbf{x}^\top \mathbf{s} > \varepsilon$}
    \STATE Solve \eqref{eq:NS} for $(\Delta \mathbf{x}$, $\Delta \mathbf{y}$, $\Delta \mathbf{s})$.
    \STATE Set $\mathbf{x}^+ := \mathbf{x} + \Delta \mathbf{x}$, $\mathbf{y}^+ := \mathbf{y} + \Delta \mathbf{y}$, $\mathbf{s}^+ := \mathbf{s} +  \Delta \mathbf{s}$.
    \STATE Set $\tau^+ := \min \{ \tau \mid \left\Vert \mathbf{s}^+ + \tau g(\mathbf{x}^+) \right\Vert_{\mathbf{x}^+}^* \leq \eta \tau \}$.
    \STATE Set $\tau := \tau^+$, $\mathbf{x} := \mathbf{x}^+$, $\mathbf{y} := \mathbf{y}^+$, $\mathbf{s} := \mathbf{s}^+$.
\ENDWHILE
\vspace{0.5em}
\STATE \textbf{return} $(\mathbf{x}, \mathbf{y}, \mathbf{s})$.
\end{algorithmic}
\end{algorithm}

Another known update strategy from the theory of IPAs for linear programming (LP) and linear complementarity problems (LCPs) is the \emph{largest step method} \cite{Gonzaga1997,Potra2012}. In our previous methods, $\tau$ is updated after taking the Newton step. In contrast, the largest update strategy updates $(\vx,\vy,\vs)$ and $\tau$ simultaneously by finding the lowest value $\tau^+$ for which the new point after the update remains in the neighborhood $\mathcal{N}(\eta,\tau^+)$. That is, denoting the solution of \eqref{eq:NS} by $(\Delta\vx_\tau,\Delta\vy_\tau,\Delta\vs_\tau)$ to emphasize its dependence on $\tau$,
\begin{equation} \label{eq:largest_update}
    \tau^+ = \inf \left\{ \tau>0\,\middle|\,\|\vs+\\\Delta\vs_\tau + \tau g(\vx+\Delta\vx_\tau) \|_{\vx+\Delta\vx_\tau}^* \leq \eta \tau \right\}.
 \end{equation}
The complexity of computing or approximating the solution of \eqref{eq:largest_update} depends on the LHSCB at hand; the pseudo-code in Algorithm \ref{alg:real_largest_update} is therefore formulated generically. For LPs, monotone LCPs, and weighted monotone LCPs, \eqref{eq:largest_update} can be solved in closed form for a very efficient update. For other barriers, one may solve \eqref{eq:NS} parametrically, and if $g$ and $H$ have a convenient characterization, $\tau^+$ may be approximated efficiently, without explicitly solving \eqref{eq:NS} for different values of $\tau$ in each iteration.

In the most general case, we can use line search (starting with the theoretically guaranteed $(1-\vartheta)\tau$ from Algorithm \ref{alg:simple_short_step}) to approximately solve \eqref{eq:largest_update}, although this requires the repeated solution of \eqref{eq:NS} for different values of $\tau$.

Each of these solution methods for \eqref{eq:largest_update} is theoretically sound, since all iterates remain in $\mathcal{N}(\eta,\tau)$, and the progress in each iteration is at least as large as in \mbox{Algorithm \ref{alg:simple_short_step}}. Therefore, the $\mathcal{O}(\sqrt{\nu})$ iteration complexity also holds, but if we have to resort to line search, that may come at an additional computational cost per iteration.

\begin{algorithm}[h]
\caption{Full-step IPA with the largest update strategy}
\label{alg:real_largest_update}
\begin{algorithmic}
\vspace{0.5em}
\STATE \textbf{inputs:}
$A \in \mathbb{R}^{m \times n}$ with full row rank; $\mathbf{b} \in \mathbb{R}^m$;  $\mathbf{c} \in \mathbb{R}^n$;
\\ \hspace{1em}an oracle for computing the Hessian $H$ of the $\nu$-LHSCB $f: \ \mathcal{K}^{\circ} \rightarrow \mathbb{R}$;
\\ \hspace{1em}an initial $\tau_0>0$ and
$(\mathbf{x}_0, \mathbf{y}_0, \mathbf{s}_0) \in \mathcal{F}^\circ$ such that $\| \mathbf{s}_0 + \tau_0 g(\mathbf{x}_0) \|_{\mathbf{x}_0}^* \leq \eta \tau_0$;
\\ \hspace{1em}$\varepsilon > 0$ optimality tolerance.

\vspace{0.5em}
\STATE \textbf{parameters:}
neighborhood parameter $\eta \in (0, 1)$;
update parameter $\vartheta$.
\vspace{0.5em}

\STATE \textbf{output:} a triplet $(\vx,\vy,\vs)\in\mathcal{F}^\circ$ satisfying $\vx^\T\vs\leq\varepsilon$.

\vspace{1em}
\STATE Initialize $\mathbf{x} := \mathbf{x}_0$, $\mathbf{y} := \mathbf{y}_0$, $\mathbf{s} := \mathbf{s}_0$, $\tau := \tau_0$
\vspace{0.5em}
\WHILE{$\mathbf{x}^\top \mathbf{s} > \varepsilon$}
    \STATE Set $\tau^+$ as the (approximate) solution of \eqref{eq:largest_update} and solve \eqref{eq:NS} for $(\Delta \mathbf{x}$, $\Delta \mathbf{y}$, $\Delta \mathbf{s})$ using $\tau=\tau^+$.

    \STATE Set $\tau:=\tau^+$, $\mathbf{x}^+ := \mathbf{x} + \Delta \mathbf{x}$, $\mathbf{y}^+ := \mathbf{y} + \Delta \mathbf{y}$, $\mathbf{s}^+ := \mathbf{s} +  \Delta \mathbf{s}$.
\ENDWHILE
\vspace{0.5em}
\STATE \textbf{return} $(\mathbf{x}, \mathbf{y}, \mathbf{s})$.
\end{algorithmic}
\end{algorithm}

\section{Initialization}
\label{sec:init}

Our algorithms are fundamentally \emph{feasible IPAs}, meaning that they require an initial primal-dual strictly feasible pair in the appropriate neighborhood of the central path. In this section, we address the question of finding such an initial point. First, we consider an important special case that covers one of our motivating applications outlined in the introduction, in which initialization is trivial. Then we outline two different two-phase approaches for bounded problems in which only a primal feasible solution is known, but not a primal-dual pair near the central path. Finally, we briefly demonstrate the application of a standard homogeneous self-dual embedding technique (see, e.g., \cite{deKlerkRoosTerlaky1997} for semidefinite programming) in our setting, with no assumptions about the feasibility and boundedness of the problem.

\subsection{Dual membership verification problems}
\label{sec:dual-membership-init}

Recall one of our motivating questions from the introduction: the certification of membership in the dual cone $\K^*$, without any assumption on $\K^*$ other than it being the dual cone of a cone with an LHSCB. Given a vector $\mathbf{t}\in\Rn$, we can decide whether $\mathbf{t}\in\Ksc$ by considering the following primal-dual pair of optimization problems:
\begin{equation} \label{eq:1constraint} \left.
\begin{aligned}
    & \min & & \mathbf{t}^\top \mathbf{x} \\
    &  \ \st& & \mathbf{w}^\top \mathbf{x} = 1 \\
    & & & \mathbf{x} \in \mathcal{K}
\end{aligned} \right\}
\qquad \qquad \left.
\begin{aligned}
    & \max & & y \\
    &  \ \st & & y \mathbf{w} + \mathbf{s} = \mathbf{t} \\
    & & & \mathbf{s} \in \mathcal{K}^*, \ y \in \mathbb{R}
\end{aligned} \right\},
\end{equation}
where $\mathbf{w} \in \Ksc$ is any vector of our choice form the interior of the dual cone. (Having access to such a vector $\mathbf{w}$ is not a strong assumption; recall that given any $\mathbf{x}_0 \in \K^\circ$, $-g(\mathbf{x}_0) \in \Ksc$ automatically.)

It is clear from the formulation that the dual problem is strictly feasible, since every sufficiently negative $y$ satisfies $\mathbf{t}-y\mathbf{w}\in\Ksc$. The primal problem is also strictly feasible if $\K$ is full-dimensional, since $\mathbf{w}\in\Ksc$, and therefore every $\mathbf{x}_0\in\K$ can be rescaled to satisfy the only equality constraint. Hence, strong duality holds for the problems in \eqref{eq:1constraint}, with attainment in both the primal and dual. Additionally, the optimal value is nonnegative if and only if $\mathbf{t}\in\K^*$.

Our next Lemma shows that if we have a strictly feasible primal solution $\mathbf{x}_0$ with $ \| \nu \mathbf{w} + g(\mathbf{x}_0) \|_{\mathbf{x}_0}^* < \eta$, where $\eta < 1/4$ is the given neighborhood radius for initialization, and
$y_0$ is small enough, then not only is $\mathbf{t}-y_0\mathbf{w}\in\Ksc$, but the initial vector $(\mathbf{x}_0, y_0, \mathbf{s}_0 := \mathbf{t}-y_0\mathbf{w})$ is near the central path.

Since  in this application we may choose $\mathbf{w} \in \Ksc$ arbitrarily, we can initialize either variant of our IPA as follows: take any known $\mathbf{x}_0 \in \K^\circ$, and choose $\mathbf{w} := -g(\mathbf{x}_0)/\nu$. Now $\mathbf{x}_0$ is primal feasible by Eq.~\ref{prop:lhscb_gradient} and $\| \nu \mathbf{w} + g(\mathbf{x}_0) \|_{\mathbf{x}_0}^* = 0 < \eta$.

\begin{lemma} \label{lemma:y0_construction}
    Let $\mathbf{x}_0 \in \mathcal{K}^\circ$ with $\mathbf{x}_0^\top \mathbf{w} = 1$ be given such that $ \| \nu \mathbf{w} + g(\mathbf{x}_0) \|_{\mathbf{x}_0}^* < \eta$,
where $\eta < \frac{1}{4}$ is the radius of our desired neighborhood for initialization.
If
\[
y_0 \leq \mathbf{x}_0^\top \mathbf{t} - \nu \frac{\| \mathbf{t} - \mathbf{x}_0^\top \mathbf{t} \mathbf{w} \|_{\mathbf{x}_0}^*}{\eta - \| \nu \mathbf{w} + g(\mathbf{x}_0) \|_{\mathbf{x}_0}^*},
\]
$\mathbf{s}_0 = \mathbf{t} - y_0 \mathbf{w}$, and $\tau_0 := \frac{\mathbf{x}_0^\top \mathbf{s}_0}{\nu}$,
then $(\mathbf{x}_0, y_0, \mathbf{s}_0)$ is in the $\mathcal{N}(\eta, \tau_0)$ neighborhood of the central path of \eqref{eq:1constraint}, that is,
\begin{equation}
\label{eq:initNbd}
\| \mathbf{s}_0 + \tau_0 g(\mathbf{x}_0) \|_{\mathbf{x}_0}^* \leq \eta \tau_0,
\end{equation}
and $(\mathbf{x}_0, y_0, \mathbf{s}_0) \in \mathcal{F}^\circ$.
\end{lemma}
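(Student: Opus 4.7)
The plan is to establish the two distinct claims—strict primal-dual feasibility $(\mathbf{x}_0, y_0, \mathbf{s}_0) \in \mathcal{F}^\circ$ and the neighborhood condition \eqref{eq:initNbd}—by reducing both to a single algebraic decomposition of $\mathbf{s}_0 + \tau_0 g(\mathbf{x}_0)$ combined with the triangle inequality in $\|\cdot\|_{\mathbf{x}_0}^*$.

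First, I would exploit the normalization $\mathbf{x}_0^\top \mathbf{w} = 1$ to compute $\mathbf{x}_0^\top \mathbf{s}_0 = \mathbf{x}_0^\top \mathbf{t} - y_0$, so that the definition of $\tau_0$ gives $\tau_0 \nu = \mathbf{x}_0^\top \mathbf{t} - y_0$. Substituting $y_0 = \mathbf{x}_0^\top \mathbf{t} - \tau_0 \nu$ into $\mathbf{s}_0 = \mathbf{t} - y_0 \mathbf{w}$ yields the key identity
\[
\mathbf{s}_0 + \tau_0 g(\mathbf{x}_0) = \bigl[\mathbf{t} - (\mathbf{x}_0^\top \mathbf{t})\mathbf{w}\bigr] + \tau_0 \bigl[\nu \mathbf{w} + g(\mathbf{x}_0)\bigr].
\]
Applying the triangle inequality and reading the assumed upper bound on $y_0$ equivalently as the lower bound
\[
\tau_0 \;\geq\; \frac{\|\mathbf{t} - (\mathbf{x}_0^\top \mathbf{t})\mathbf{w}\|_{\mathbf{x}_0}^*}{\eta - \|\nu \mathbf{w} + g(\mathbf{x}_0)\|_{\mathbf{x}_0}^*},
\]
the desired bound $\|\mathbf{s}_0 + \tau_0 g(\mathbf{x}_0)\|_{\mathbf{x}_0}^* \leq \eta \tau_0$ falls out immediately; the denominator is positive by hypothesis, so the rearrangement is valid, and the numerator being nonnegative also shows $\tau_0 > 0$ (strictly so, unless $\mathbf{t}$ is itself a scalar multiple of $\mathbf{w}$, a degenerate case one may handle separately).

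For strict primal-dual feasibility, only $\mathbf{s}_0 \in (\mathcal{K}^*)^\circ$ remains to be checked: $\mathbf{x}_0 \in \mathcal{K}^\circ$ is assumed, and the linear constraints $\mathbf{x}_0^\top \mathbf{w} = 1$ and $y_0 \mathbf{w} + \mathbf{s}_0 = \mathbf{t}$ hold by construction. Writing $\mathbf{s}_0 = -\tau_0 g(\mathbf{x}_0) + \mathbf{r}$ with $\mathbf{r} := \mathbf{s}_0 + \tau_0 g(\mathbf{x}_0)$ and dividing by $\tau_0 > 0$ gives $\mathbf{s}_0/\tau_0 = -g(\mathbf{x}_0) + \mathbf{r}/\tau_0$ with $\|\mathbf{r}/\tau_0\|_{\mathbf{x}_0}^* \leq \eta < 1$. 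Since the unit dual-Dikin ball centered at $-g(\mathbf{x}_0)$ lies inside $(\mathcal{K}^*)^\circ$ (a standard LHSCB property listed in Appendix~\ref{sec:lhscb}), we conclude $\mathbf{s}_0/\tau_0 \in (\mathcal{K}^*)^\circ$, and hence $\mathbf{s}_0 \in (\mathcal{K}^*)^\circ$ by positive homogeneity of the cone.

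The whole argument really hinges on spotting the decomposition in the second step; once written down, the hypothesis on $y_0$ matches the triangle-inequality requirement on the nose and the remainder is bookkeeping. The only mild subtlety is confirming $\tau_0 > 0$ before invoking the Dikin-ellipsoid step, which follows directly from the assumed upper bound on $y_0$.
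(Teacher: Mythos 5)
Your proposal is correct and follows essentially the same route as the paper's proof: the same decomposition $\mathbf{s}_0 + \tau_0 g(\mathbf{x}_0) = \bigl[\mathbf{t} - (\mathbf{x}_0^\top \mathbf{t})\mathbf{w}\bigr] + \tau_0\bigl[\nu\mathbf{w} + g(\mathbf{x}_0)\bigr]$ obtained via $\tau_0\nu = \mathbf{x}_0^\top\mathbf{t} - y_0$, the same triangle-inequality step combined with the lower bound on $\tau_0$ read off from the hypothesis on $y_0$, and the same dual Dikin-ball argument for $\mathbf{s}_0 \in (\mathcal{K}^*)^\circ$. Your explicit flagging of the degenerate case $\mathbf{t} \in \operatorname{span}\{\mathbf{w}\}$ is a point the paper glosses over (its final inequality divides by $\|\mathbf{t} - \mathbf{x}_0^\top\mathbf{t}\,\mathbf{w}\|_{\mathbf{x}_0}^*$), so that remark is a small improvement rather than a deviation.
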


\begin{proof}
First, observe that  we can derive a lower bound on $\tau_0$ using the given  upper bound on $y_0$:
\begin{equation*}
\tau_0 = \frac{\mathbf{x}_0^\top \mathbf{s}_0}{\nu} = \frac{\mathbf{x}_0^\top (\mathbf{t} - y_0 \mathbf{w})}{\nu} = \frac{\mathbf{x}_0^\top \mathbf{t} - y_0 \mathbf{x}_0^\top \mathbf{w}}{\nu} = \frac{\mathbf{x}_0^\top \mathbf{t} - y_0}{\nu} \geq \frac{\| \mathbf{t} - \mathbf{x}_0^\top \mathbf{t} \mathbf{w} \|_{\mathbf{x}_0}^*}{\eta - \| \nu \mathbf{w} + g(\mathbf{x}_0) \|_{\mathbf{x}_0}^*}.
\end{equation*}
\newpage
Using the triangle inequality and this bound,
the inequality \eqref{eq:initNbd} follows:

\[
\begin{aligned}
\frac{1}{\tau_0} \| \mathbf{s}_0 + \tau_0 g(\mathbf{x}_0) \|_{\mathbf{x}_0}^* &= \frac{1}{\tau_0} \| \mathbf{t} - y_0 \mathbf{w} + \tau_0 g(\mathbf{x}_0) \|_{\mathbf{x}_0}^*\\
&= \frac{1}{\tau_0} \| \mathbf{t} - (\mathbf{x}_0^\top \mathbf{t} - \nu \tau_0) \mathbf{w} + \tau_0 g(\mathbf{x}_0) \|_{\mathbf{x}_0}^* \\
&\leq \frac{1}{\tau_0} \| \mathbf{t} - \mathbf{x}_0^\top \mathbf{t} \mathbf{w} \|_{\mathbf{x}_0}^* + \| \nu \mathbf{w} + g(\mathbf{x}_0) \|_{\mathbf{x}_0}^* \\
&\leq \frac{\eta - \| \nu \mathbf{w} + g(\mathbf{x}_0) \|_{\mathbf{x}_0}^*}{\| \mathbf{t} - \mathbf{x}_0^\top \mathbf{t} \mathbf{w} \|_{\mathbf{x}_0}^*} \| \mathbf{t} - \mathbf{x}_0^\top \mathbf{t} \mathbf{w} \|_{\mathbf{x}_0}^* + \| \nu \mathbf{w} + g(\mathbf{x}_0) \|_{\mathbf{x}_0}^* \leq \eta.
\end{aligned}
\]

This also means that $\frac{\mathbf{s}_0}{\tau_0} \in \mathcal{B}^*(-g(\mathbf{x}_0), \eta)$. As $-g(\mathbf{x}_0) \in (\mathcal{K}^*)^{\circ}$ and $\eta < 1$, it follows that $\mathbf{s}_0 \in (\mathcal{K}^*)^{\circ}$ holds. Therefore, $(\mathbf{x}_0, y_0, \mathbf{s}_0) \in \mathcal{F}^\circ$.
\end{proof}

\subsection{A two-phase approach for problems with a single equality constraint}
\label{sec:two-phase}

We now turn to a more general family of problems, which admits a simple two-phase strategy with strong complexity guarantees. Assume that our problem at hand is of the form \eqref{eq:1constraint}, as in the previous section, except that $\mathbf{w}$ is a given vector from $\Ksc$ that does not necessarily satisfy the norm inequality condition of Lemma \ref{lemma:y0_construction}. Also assume that we have a known strictly primal feasible solution $\vx_0$.

In this case, an initial primal-dual iterate near the central path can be found by applying Algorithm~\ref{alg:simple_short_step} to the following pair of auxiliary problems:
\begin{equation} \label{eq:phase_1}
    \left.
\begin{aligned}
\min \quad & \nu \mathbf{w}^\top \mathbf{x} \\
\st\quad\; & \mathbf{e}^\top \mathbf{x} = 1 \\
& \mathbf{x} \in \mathcal{K}
\end{aligned} \right\}
\hspace{2cm} \left.
\begin{aligned}
\max \quad & y \\
\st\quad\;\, & y \mathbf{e} + \mathbf{s} = \nu \mathbf{w} \\
& \mathbf{s} \in \mathcal{K}^*, \ y \in \mathbb{R}
\end{aligned} \right\},
\end{equation}
where $\mathbf{e} = -\frac{g(\mathbf{x}_0)}{\nu}$.

\begin{lemma}
\label{lemma:phase1theory1}
The problems \eqref{eq:phase_1} have strong duality with attained optimal solutions. The optimal value is positive, and there are strictly feasible solutions $(\vx,y,\vs)$ with $y=0$.
\end{lemma}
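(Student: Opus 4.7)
The plan is to verify strict feasibility on both sides of \eqref{eq:phase_1} directly, invoke the standard two-sided Slater conic duality theorem to get strong duality and attainment, and then perturb the constructed strictly dual feasible point in the direction of increasing $y$ to certify strict positivity of the common optimal value.

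First I would exhibit a strictly primal feasible point by taking $\vx_0$ itself: the hypothesis $\vx_0\in\K^\circ$ takes care of the cone constraint, and the LHSCB identity $g(\vx_0)^\T\vx_0=-\nu$ (property \ref{prop:lhscb_gradient}) together with the definition $\ve=-g(\vx_0)/\nu$ immediately yields $\ve^\T\vx_0=1$. For the dual I would take $(y,\vs)=(0,\nu\vw)$: the equality constraint $y\ve+\vs=\nu\vw$ is trivial, and $\vs=\nu\vw\in\Ksc$ follows from $\vw\in\Ksc$ and $\nu>0$ since $\Ksc$ is a cone. This triple $(\vx_0,0,\nu\vw)$ already certifies the last clause of the lemma (strict feasibility with $y=0$), and the joint existence of strict primal and strict dual feasible points is exactly the two-sided Slater condition, from which strong duality with attainment in both problems follows from standard conic duality.

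For the positivity of the optimal value, the key observation is that $\Ksc$ is open, so since $\nu\vw\in\Ksc$ there exists $\bar y>0$ small enough that $\nu\vw-\bar y\,\ve\in\Ksc$. Then $(\bar y,\nu\vw-\bar y\,\ve)$ is dual feasible with objective value $\bar y>0$, so the dual, and hence by strong duality also the primal, optimal value is at least $\bar y>0$.

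I do not expect any real obstacle here; the only mildly substantive step is citing the right version of conic strong duality, and that is essentially immediate from two-sided Slater. All remaining steps are direct consequences of the LHSCB facts collected in Appendix \ref{sec:lhscb} together with the openness of $\Ksc$.
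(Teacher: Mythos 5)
Your proposal is correct and follows essentially the same route as the paper: exhibit $\vx_0$ as a primal Slater point via \ref{prop:lhscb_gradient}, exhibit $(0,\nu\vw)$ as a dual Slater point, invoke conic strong duality, and use the strictness of the dual point to conclude the optimal value exceeds $0$. Your explicit perturbation $\nu\vw-\bar y\,\ve\in\Ksc$ merely spells out what the paper compresses into the remark that $0$ is a ``suboptimal attainable value'' in the dual.
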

\begin{proof}
    The strong duality claim follows from our assumptions on $\vx_0$ and $\vw$: $\vx_0$ is a Slater point for the primal problem by definition, and $(0,\vs) = (0,\nu\vw)$ is a Slater point for the dual problem because $\vw\in\Ksc$. This also shows that $0$ is a suboptimal attainable value in the dual, therefore the optimal $y$ is positive.
\end{proof}
We can compute a strictly feasible solution $(\vx,0,\vs)$ to \eqref{eq:phase_1} as follows. By construction, \eqref{eq:phase_1} is an instance of \eqref{eq:1constraint} studied in the previous subsection, for which  $(\vx_0,0,-g(\mathbf{x}_0))$ is a strictly feasible solution on the central path, hence we can apply Lemma \ref{lemma:y0_construction} to initialize Algorithm \ref{alg:simple_short_step} and proceed to compute an approximate optimal solution to \eqref{eq:phase_1}, stopping at $y=0$ along the way.

The only (small, but technical) missing detail is that when we apply either of our methods to \eqref{eq:phase_1}, it is likely that we will never hit $y=0$ exactly; instead, the last full Newton step will take us from $y<0$ to $y>0$. We can easily fix this by taking a shorter Newton step in the last iteration: if $y<0$ but $y+\Delta y>0$, update $(\vx,y,\vs)$ to $(\vx+\alpha \Delta\vx,0,\vs+\alpha\Delta\vs)$ with the step size $\alpha$ that yields $y+\alpha\Delta y = 0$, and we expect that this will always work in practice. However, parts of our analysis do not apply if any of our Newton steps are damped. In the remainder of this subsection we show that if we solve the Phase 1 problem \eqref{eq:phase_1} by applying Algorithm \ref{alg:simple_short_step} with a sufficiently small neighborhood $\tilde{\eta}<\eta$, then even with a damped final step, the computed feasible solution $(\vx,0,\vs)$ yields a well-centered initial iterate for the solution of \eqref{eq:1constraint}. This claim is made precise by our next Lemma.
\begin{lemma}
\label{lemma:phase1-termination}
Assume that a strictly feasible solution to \eqref{eq:phase_1} of the form $(\vx,y,\vs)=(\vx,0,\vs)$ was computed with a modification of Algorithm \ref{alg:simple_short_step}, using a neighborhood radius of $\tilde{\eta}\leq 1/10$, where after a series of full Newton steps, the last iterate was arrived at with a damped Newton step of length $\alpha\in(0,1]$. Then $\vx/\mu$ (with $\mu=\vs^\T\vx/\nu$) is a primal feasible solution to \eqref{eq:1constraint} that also satisfies the inequality
\[\|\nu\vw + g(\vx/\mu)\|^*_{\vx/\mu} < 1/4.\]
\end{lemma}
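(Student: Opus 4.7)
The plan is to bound the centering residual $\|\nu\vw+g(\vx/\mu)\|_{\vx/\mu}^*$ directly, exploiting that the iterate $(\vx^-,y^-,\vs^-)$ just before the damped step lies in $\mathcal{N}(\tilde\eta,\tau^-)$ and that $f$ is self-concordant. Writing $(\vx,y,\vs)=(\vx^-+\alpha\Delta\vx,\,0,\,\vs^-+\alpha\Delta\vs)$, the identities $\Delta\vs=-\Delta y\,\ve$ (from \eqref{eq:NS}) and $\alpha\Delta y=-y^-$ (by choice of the damped step length) give $\vs=\nu\vw$, so $\mu=\vs^\T\vx/\nu=\vw^\T\vx$ and $\vw^\T(\vx/\mu)=1$, settling primal feasibility. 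The inclusion $\vx\in\K^\circ$ (hence $\vx/\mu\in\K^\circ$, since $\mu>0$) follows from $\|\vx-\vx^-\|_{\vx^-}\leq\tilde\eta<1$ via the Dikin ellipsoid. By logarithmic homogeneity, $g(\vx/\mu)=\mu g(\vx)$ and $H(\vx/\mu)=\mu^2 H(\vx)$, so the desired conclusion reduces to $\|\vs+\mu g(\vx)\|_\vx^*<\mu/4$.

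The key technical step is the identity
\[
\vs + \tau^- g(\vx) = (1-\alpha)\bigl(\vs^- + \tau^- g(\vx^-)\bigr) + \tau^- R,
\qquad R := g(\vx)-g(\vx^-)-H(\vx^-)(\vx-\vx^-),
\]
which I would derive by substituting $\Delta\vs=-(\vs^-+\tau^- g(\vx^-))-\tau^- H(\vx^-)\Delta\vx$ from \eqref{eq:NS} into $\vs=\vs^-+\alpha\Delta\vs$ and using $g(\vx)-g(\vx^-)=\alpha H(\vx^-)\Delta\vx+R$; the two linear-in-$\Delta\vx$ contributions then cancel. The virtue of this identity is that it cleanly isolates the damping contribution (the $(1-\alpha)$ factor on the pre-step centering residual, controlled by the neighborhood condition) from the quadratic self-concordance remainder $R$, sidestepping the need for a damped-Newton convergence estimate that would otherwise be required to mirror the full-step analysis in Lemma~\ref{lemma:zplus-in-Nthetaplus}.

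From here the bookkeeping is routine. Setting $r:=\|\vx-\vx^-\|_{\vx^-}\leq\tilde\eta$, the neighborhood inclusion gives $\|\vs^-+\tau^- g(\vx^-)\|_{\vx^-}^*\leq\tilde\eta\tau^-$, the standard LHSCB Taylor-remainder bound gives $\|R\|_{\vx^-}^*\leq r^2/(1-r)$, and the comparison $H(\vx)\succeq(1-r)^2 H(\vx^-)$ gives $\|\cdot\|_\vx^*\leq(1-r)^{-1}\|\cdot\|_{\vx^-}^*$. Substituting these into the identity and using the triangle inequality,
\[
\|\vs+\tau^- g(\vx)\|_\vx^* \leq \frac{(1-\alpha)\tilde\eta}{1-\alpha\tilde\eta}\,\tau^- + \frac{(\alpha\tilde\eta)^2}{(1-\alpha\tilde\eta)^2}\,\tau^-,
\]
and an elementary algebraic check (equivalent, after clearing denominators, to $\alpha\tilde\eta(2-\tilde\eta)\leq 1-\tilde\eta$, which holds uniformly in $\alpha\in[0,1]$ once $\tilde\eta\leq 1/10$) shows that the right-hand side is at most $\tilde\eta\tau^-$.

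To close, observe that the quadratic $t\mapsto\|\vs+tg(\vx)\|_\vx^{*2}$ is minimized at $t^*=-\vs^\T H(\vx)^{-1}g(\vx)/\nu=\vs^\T\vx/\nu=\mu$ (using $H(\vx)\vx=-g(\vx)$ and $\|g(\vx)\|_\vx^{*2}=\nu$), so $\|\vs+\mu g(\vx)\|_\vx^*\leq\|\vs+\tau^- g(\vx)\|_\vx^*\leq\tilde\eta\tau^-$. Pairing $\vs+\tau^- g(\vx)$ with $\vx$ gives the identity $(\mu-\tau^-)\nu=(\vs+\tau^- g(\vx))^\T\vx$, so by Cauchy--Schwarz $|\mu-\tau^-|\leq\tilde\eta\tau^-/\sqrt\nu\leq\tilde\eta\tau^-$ for $\nu\geq 1$, whence $\mu\geq\tau^-(1-\tilde\eta)$. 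Combining these, $\|\vs+\mu g(\vx)\|_\vx^*/\mu\leq\tilde\eta/(1-\tilde\eta)\leq 1/9<1/4$, as required. The main obstacle is spotting and verifying the key identity above; once it is in hand, every other step is either a standard LHSCB estimate already used elsewhere in the paper or a short algebraic manipulation.
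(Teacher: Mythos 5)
Your proof is correct, and while its first half coincides with the paper's, its second half takes a genuinely different and arguably cleaner route. The identity $\vs+\tau^- g(\vx)=(1-\alpha)(\vs^-+\tau^- g(\vx^-))+\tau^- R$ and the resulting bound $\frac{(1-\alpha)\tilde\eta}{1-\alpha\tilde\eta}\tau^-+\frac{(\alpha\tilde\eta)^2}{(1-\alpha\tilde\eta)^2}\tau^-$ are exactly the content of the paper's Lemma \ref{lemma:damped_2eta} (Appendix \ref{sec:alg3_damped}); you simply keep the residual measured against $\tau^-$ instead of $\tau^+=(1-\vartheta)\tau^-$, which lets you retain the factor $\tilde\eta$ rather than degrading to $2\tilde\eta$. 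The divergence is in how you pass from the residual at $\tau^-$ to the residual at $\mu$: the paper writes $\|\nu\vw+g(\vx/\mu)\|^*_{\vx/\mu}\leq\frac{2\tau}{\mu}\tilde\eta+|1-\tau/\mu|\sqrt\nu$ and controls $\tau/\mu$ via the separate duality-gap estimates of Lemma \ref{lemma:alg3_mu_bounds}, paying a $\vartheta\sqrt\nu$ penalty that must then be checked numerically against $1/4$; you instead observe that $\mu=\vs^\T\vx/\nu$ is precisely the minimizer of the convex quadratic $t\mapsto(\|\vs+tg(\vx)\|_\vx^*)^2$ (by \ref{prop:lhscb_Hessian} and \ref{prop:lhscb_gradient}), so $\|\vs+\mu g(\vx)\|_\vx^*\leq\|\vs+\tau^- g(\vx)\|_\vx^*\leq\tilde\eta\tau^-$ with no gradient-norm correction at all, and you lower-bound $\mu$ by Cauchy--Schwarz in the local inner product rather than by Lemma \ref{lemma:alg3_mu_bounds}. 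This makes the second appendix lemma unnecessary, replaces the $\nu$-dependent final estimate with the dimension-free bound $\tilde\eta/(1-\tilde\eta)\leq 1/9$, and would in fact allow a somewhat larger $\tilde\eta$ than $1/10$. All the individual steps check out: the algebraic inequality $\alpha\tilde\eta(2-\tilde\eta)\leq 1-\tilde\eta$ is indeed equivalent to the claimed domination by $\tilde\eta\tau^-$ and holds for all $\alpha\in(0,1]$ when $\tilde\eta\leq 1/10$, and the identity $(\mu-\tau^-)\nu=(\vs+\tau^- g(\vx))^\T\vx$ together with $\|\vx\|_\vx=\sqrt\nu$ gives $\mu\geq(1-\tilde\eta)\tau^-$ as stated.
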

\begin{proof}
Because our last step is damped, we can no longer assume that the algorithm returns a point within some $\mathcal{N}(\tilde\eta,\tau)$ neighborhood, but it can be shown that the final iterate is in some $\mathcal{N}(2\tilde\eta,\tau)$ neighborhood. (See Lemma \ref{lemma:damped_2eta} in Appendix \ref{sec:alg3_damped}.) That is, $(\vx,0,\vs)$ is a strictly feasible solution satisfying
\[ \|\vs + \tau g(\vx)\|^*_\vx \leq 2\tilde{\eta}\tau. \]
Using the properties of LHSCBs and substituting $\vs=\nu\vw$, we can equivalently write this inequality as
\[
\left\|\nu\vw + \frac{\tau}{\mu} g(\vx/\mu)\right\|^*_{\vx/\mu} \leq \frac{2\tilde{\eta}\tau}{\mu}.
\]
From this, we have the bound
\begin{equation}
\label{eq:phase1_bnd_tmp}
\|\nu\vw + g(\vx/\mu)\|^*_{\vx/\mu} \leq
\frac{2\tau}{\mu} \tilde{\eta} + \left|1-\frac{\tau}{\mu}\right| \|g(\vx/\mu)\|^*_{\vx/\mu} \stackrel{\ref{prop:lhscb_norm}}{=} \frac{2\tau}{\mu} \tilde{\eta} + \left|1-\frac{\tau}{\mu}\right|\sqrt{\nu},
\end{equation}
and we need to show that the right-hand side is below $1/4$ for our choice of $\tilde{\eta}$, using suitable bounds on $\tau/\mu$. Lemma \ref{lemma:DG} is \emph{not} applicable, as it assumes full Newton steps. An analogous argument, however, yields the following bounds for all step sizes (see Lemma \ref{lemma:alg3_mu_bounds} in Appendix \ref{sec:alg3_damped}):
\begin{equation}
\label{eq:mu-bounds-alpha}
\left( 1 - \frac{\tilde\eta^2}{\nu} \right) \tau \leq \mu \leq \frac{1}{1 - \vartheta} \tau.
\end{equation}

From \eqref{eq:mu-bounds-alpha} and $\nu\geq 1$, we get that for every $\tilde\eta<1/4$, $\tau/\mu \leq \frac{\nu}{\nu-\tilde{\eta}^2}$ and $|1-\tau/\mu| \leq \vartheta$. Thus, \eqref{eq:phase1_bnd_tmp} can be further bounded as
\begin{equation}
\|\nu\vw + g(\vx/\mu)\|^*_{\vx/\mu} \leq  2
\frac{\tilde{\eta}\nu}{\nu-\tilde{\eta}^2} + \vartheta\sqrt{\nu} =2\frac{\tilde{\eta}\nu}{\nu-\tilde{\eta}^2} + \frac{\tilde{\eta}/2}{\sqrt{\nu}+1}\sqrt{\nu},
\end{equation}
which in turn is strictly below $1/4$ for every $\nu\geq 1$ if $\tilde{\eta}\leq 1/10$, completing our proof.
\end{proof}

In summary, after obtaining a solution $(\vx,0,\vs)$ to our Phase 1 problem \eqref{eq:phase_1} as prescribed in Lemma \ref{lemma:phase1-termination}, we are ready to apply Lemma \ref{lemma:y0_construction} to initialize either of our algorithms using the computed $\vx/\mu$ and solve our original problem.

Aside from a single damped Newton step in its last iteration, Phase 1 is Algorithm \ref{alg:simple_short_step} applied to the problem \eqref{eq:phase_1}, meaning that the number of Phase 1 iterations is $\mathcal{O}(\sqrt{\nu})$. Therefore, the two-phase approach described in this section has a total iteration complexity of order $\mathcal{O}(\sqrt{\nu})$.

\subsection{Transforming a bounded problem with a known feasible solution to one with a single inhomogeneous equation}
\label{sec:transform_to_1constraint}

So far we have assumed that our primal problem (P) in standard form has only a single linear equality constraint. Next, we consider solving a general conic program in the form of \eqref{eq:PD_problem}, under two additional assumptions that ensure a finite infimum: suppose that we are given a strictly (primal) feasible vector $\vx_0$ and that the feasible region is \emph{a priori} bounded. In this section, we show that under these assumptions, every problem can be reduced to an instance of the problem studied in the previous section.

Consider an instance of our general conic problem \eqref{eq:PD_problem}, suppose that a vector $\vz\in\Ksc$ is readily available, perhaps in the form of $\vz = -g(\bar{\vx})$ for some $\bar{\vx}\in\K^\circ$, and that the feasible region of the primal problem, $\{\vx\in\Rn\,|\,A\vx=\mathbf\vb,\,\vx\in\K\}$, is \emph{a priori} known to be bounded explicitly. The latter implies that we have a redundant inequality $(0\leq)\vz^T\vx<U$ for some known upper bound $U$. Now consider the following representation of the feasible region:
\begin{equation}\label{eq:1constrain_origin}
    \begin{pmatrix}
        \vz^\T & 1 & 1\\
        \vz^\T & 1 & -U\\
        A      & \mathbf{0} & -\vb
    \end{pmatrix}
    \begin{pmatrix}
        \vx\\ \xi \\ \zeta
    \end{pmatrix}=
    \begin{pmatrix}
        U+1 \\ 0 \\ \mathbf{0}
    \end{pmatrix}, \qquad (\vx,\xi,\zeta) \in\K \times\R_+\times\R_+.
\end{equation}
On the one hand, it is clear that if $\vx$ is a feasible (resp., strictly feasible) solution to the original problem, then there exists a feasible (resp., strictly feasible) solution of the form $(\vx,\xi,1)$ of \eqref{eq:1constrain_origin}. (Choose $\xi:=U-\vz^\T\vx$.) The converse also holds: from the first two equations in \eqref{eq:1constrain_origin}, we see that every feasible solution to \eqref{eq:1constrain_origin} satisfies $\zeta=1$, which in turn implies $A\vx=\vb$ from the last equation. Therefore, the $\vx$ component of every $(\vx,\xi,\zeta)$ that satisfies \eqref{eq:1constrain_origin} is automatically a feasible solution to the original primal problem.

In this manner, every bounded primal problem can be rewritten in an equivalent form in which there is only a single inhomogeneous equality constraint. The homogeneous equations can then be incorporated into the cone constraint: defining
\[ \K_{A,\vb,U} := \left\{
(\vx,\xi,\zeta)\in\K\times\R_+\times\R_+
\,\middle|\,
    \begin{pmatrix}
        \vz^\T & 1 & -U\\
        A      & \mathbf{0} & -\vb
    \end{pmatrix}
    \begin{pmatrix}
        \vx\\ \xi \\ \zeta
    \end{pmatrix}=
    \begin{pmatrix}
        0 \\ \mathbf{0}
    \end{pmatrix}\right\},\]
we can write our original primal optimization problem as
\begin{equation}\label{eq:1constrain_end}
    \left. \begin{aligned}
    \min \ & \mathbf{c}^\top \vx \\
    \st\,\  & \vz^\T\mathbf{x} + \xi + \zeta = U+1 \\
    & (\vx,\xi,\zeta)\in\K_{A,\vb,U}
\end{aligned} \right\}.
\end{equation}
The reformulation \eqref{eq:1constrain_end} has two crucial properties: it has only a single, inhomogeneous, equality constraint besides the convex cone constraint, and the coefficient vector in that constraint belongs to the interior of the dual cone:
$(\vz,1,1) \in \Ksc\times\R_{++}\times\R_{++}\subseteq(\K_{A,\vb,U}^*)^\circ.$
Moreover, if we have an efficiently computable LHSCB for $\K$, say $f_\K$, then we also have one readily available for the new cone $\K_{A,\vb,U}$: a suitable LHSCB is simply the restriction of $f_\K + f_{\R_+^2}$ to the linear subspace in the definition of $\K_{A,\vb,U}$. Finally, any strictly feasible $\vx_0$ for our original primal problem extends automatically to a strictly feasible $(\vx,\xi,\zeta) = (\vx_0,1,U-\vz^\T\vx_0)$ of \eqref{eq:1constrain_end}.

This completes our transformation of our problem to an instance that can be solved using the two-phase method of the previous section.

\subsection{A direct two-phase approach for general equality constraints}

In this subsection, we show that another well-known initialization technique from the theory of IPAs can also be applied if a strictly feasible primal solution is available and the feasible set is bounded. This technique requires no transformations, additional variables, or modifications to the underlying cone.

For a suitably defined Phase 1 problem, we will follow the central path ``backwards'' by iteratively increasing the parameter $\tau$ until we get close enough to the analytic center of the feasible region, at which point a strictly feasible primal-dual solution can be defined in the $\mathcal{N}(\eta,\tau)$ neighborhood of the central path of our original problem \eqref{eq:PD_problem}.

For this initialization strategy, the Phase 1 problem can be defined as follows:
\begin{equation}
    \left. \begin{aligned}
        \min\;\;\, &-g(\mathbf{x}_0)^\T  \mathbf{x} \\
      \st\quad &   A \mathbf{x} = \mathbf{b} \\
         &\mathbf{x} \in \mathcal{K}
    \end{aligned} \right\} \qquad
    \left.
    \begin{aligned}
        \max \ & \mathbf{b}^\T \mathbf{y} \\
        \st\;\;\;&A^\T \mathbf{y} + \mathbf{s}  = -g(\mathbf{x}_0) \\
       & \mathbf{s} \in \mathcal{K}^*
    \end{aligned} \right\},
    \label{eq:phase1_backwardsIPA}
\end{equation}
where $\vx_0$ is the known strictly feasible primal solution, that is, $A \mathbf{x}_0 = \mathbf{b}$ and $\mathbf{x}_0 \in \mathcal{K}^\circ$. It is immediate that the triplet $(\vx,\vy,\vs) = (\vx_0,\mathbf{0}, -g(\vx_0))$
is a strictly feasible primal-dual solution to \eqref{eq:phase1_backwardsIPA}, moreover, this point is on the central path of this problem. The $\tilde\eta$-neighborhood of the central path (recall Eq. \eqref{eq:nbd}) is now defined as the set of strictly feasible points satisfying
 \begin{equation}
      \left\Vert -g(\mathbf{x}_0) - A^\T \mathbf{y} + \tau g(\mathbf{x} )\right\Vert_{\mathbf{x}}^* \leq \tilde\eta \tau.
     \label{eq:nbh_backwardsIPA}
 \end{equation}
This inequality obviously holds for our starting triplet with $\tau=1$ and every $\tilde\eta\geq0$.

 Thus, we may run Algorithm \ref{alg:simple_short_step} with two modifications: first, instead of decreasing $\tau$ in every iteration, we increase it to move ``backwards'' on the central path. The second change is in the stopping criterion: we increase $\tau$ until we find ourselves in the neighborhood of the central path of our \emph{original} problem; that is, until
 \[ \left\Vert \mathbf{c} - A^\T \mathbf{y} + \tau g(\mathbf{x}) \right\Vert_{\mathbf{x}}^* \leq{\eta} \tau \]
 is satisfied for some $\eta \leq 1/4$, meaning that we have a suitable starting point in the required neighborhood of the central path \eqref{eq:CP} of our original problem. Intuitively, Phase 1 terminates because all central paths meet at the analytic center; the precise argument is below.

 First, we need to show that the new iterates remain in the neighborhood of the Phase 1 central path, even if we increase the value of $\tau$ instead of decreasing it.
 \begin{lemma} \label{lemma:nbh_backwardsIPA}
    Let $(\mathbf{x}, \mathbf{y}, \mathbf{s}) \in \mathcal{N}(\tilde\eta , \tau)$, let $(\Delta \mathbf{x}, \Delta \mathbf{y}, \Delta \mathbf{s})$ be the solution of \eqref{eq:NS}, and let $(\mathbf{x}^+, \mathbf{y}^+, \mathbf{s}^+) = (\mathbf{x}, \mathbf{y}, \mathbf{s}) + (\Delta \mathbf{x}, \Delta \mathbf{y}, \Delta \mathbf{s})$. If $0 < \tilde\eta \leq \frac{1}{3}$ and $
\vartheta = \frac{\tilde\eta / 2}{\sqrt{\nu} + 1}$,
then the new point satisfies $(\mathbf{x}^+, \mathbf{y}^+, \mathbf{s}^+) \in \mathcal{N}(\tilde\eta, \tau^+)$ with $\tau^+ = (1 + \vartheta) \tau$.
\end{lemma}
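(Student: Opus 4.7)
The plan is to mirror the proof of Lemma \ref{lemma:zplus-in-Nthetaplus} with only the sign of the $\tau$-update reversed. Crucially, the Newton system \eqref{eq:NS} is unchanged between the forward and backward variants of Algorithm \ref{alg:simple_short_step}: it depends only on the current iterate and the current $\tau$, not on the direction in which $\tau$ will next move. Consequently Lemma \ref{lemma:sd_properties} applies verbatim and yields the bounds $\|\Delta\vx\|_\vx\leq\tilde\eta$, $\|\Delta\vs\|^*_\vx\leq\tilde\eta\tau$, and $\|\vs+\tau g(\vx)+\Delta\vs\|^*_\vx\leq\tilde\eta\tau$. The only genuine change in the analysis is the passage from the current $\tau$ to the updated parameter $\tau^+=(1+\vartheta)\tau$.

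Concretely, I would introduce the same self-concordant auxiliary function $\varphi(\vv)=\tau^{-1}(\vs+\Delta\vs)^\T\vv+f(\vv)$ used in the proof of Lemma \ref{lemma:zplus-in-Nthetaplus}; its (unconstrained) Newton step at $\vx$ is exactly $\Delta\vx$, so $\|\vn_\varphi(\vx)\|_\vx=\|\Delta\vx\|_\vx\leq\tilde\eta<1$. Applying the quadratic convergence bound for Newton's method on self-concordant functions (property \ref{prop:scb_NS}) at $\vx^+=\vx+\Delta\vx$ gives
\[
\tau^{-1}\|\vs+\Delta\vs+\tau g(\vx^+)\|^*_{\vx^+}=\|\vn_\varphi(\vx^+)\|_{\vx^+}\leq\frac{\tilde\eta^2}{(1-\tilde\eta)^2}.
\]
Next I would swap $\tau g(\vx^+)$ for $\tau^+g(\vx^+)$ via the triangle inequality with $\tau^+-\tau=\vartheta\tau$, divide by $\tau^+=(1+\vartheta)\tau$, and apply $\|g(\vx^+)\|^*_{\vx^+}\leq\sqrt\nu$ (property \ref{prop:lhscb_norm}) to conclude
\[
\frac{1}{\tau^+}\|\vs^++\tau^+g(\vx^+)\|^*_{\vx^+}\leq\frac{1}{1+\vartheta}\cdot\frac{\tilde\eta^2}{(1-\tilde\eta)^2}+\frac{\vartheta\sqrt\nu}{1+\vartheta}.
\]

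All that remains is the numerical verification that the right-hand side above does not exceed $\tilde\eta$ for the prescribed $\vartheta=\tilde\eta/(2(\sqrt\nu+1))$ and every $\tilde\eta\leq 1/3$. Substituting $\vartheta$ and rewriting the two fractions with the common denominator $\sqrt\nu+1+\tilde\eta/2$ produces an expression whose worst case in $\nu\geq 1$ can be identified by monotonicity, reducing the claim to a single inequality in $\tilde\eta$; this will be the main obstacle, though it is entirely parallel to the corresponding numerical step in the proof of Lemma \ref{lemma:zplus-in-Nthetaplus}. Note that the sign flip $1-\vartheta\mapsto 1+\vartheta$ only tightens both prefactors, which is precisely why the admissible neighborhood radius in the backward direction can be larger than the $\eta\leq 1/4$ permitted in the forward direction. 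Finally, strict feasibility $(\vx^+,\vy^+,\vs^+)\in\mathcal{F}^\circ$ follows exactly as in the forward case: $\vx^+\in\K^\circ$ from $\|\Delta\vx\|_\vx\leq\tilde\eta<1$ (the Dikin-ellipsoid property), and $\vs^+\in(\K^*)^\circ$ because the established bound places $\vs^+/\tau^+$ inside the unit dual Dikin ball centered at $-g(\vx^+)\in(\K^*)^\circ$.
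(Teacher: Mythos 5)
Your outline is exactly the route the paper intends (the paper omits this proof, stating it is ``essentially identical'' to that of Lemma \ref{lemma:zplus-in-Nthetaplus}), and everything up to your displayed bound
\[
\frac{1}{\tau^+}\|\vs^++\tau^+g(\vx^+)\|^*_{\vx^+}\leq\frac{1}{1+\vartheta}\cdot\frac{\tilde\eta^2}{(1-\tilde\eta)^2}+\frac{\vartheta\sqrt\nu}{1+\vartheta}
\]
is correct. The gap is precisely in the step you defer as ``the main obstacle'': the numerical verification does not go through for the full range $\tilde\eta\leq 1/3$. Substituting $\vartheta=\frac{\tilde\eta/2}{\sqrt\nu+1}$, the requirement that the right-hand side be at most $\tilde\eta$ is equivalent to
\[
\frac{\tilde\eta}{(1-\tilde\eta)^2}\;\leq\;\frac{\sqrt\nu+2+\tilde\eta}{2(\sqrt\nu+1)},
\]
whose right-hand side decreases to $\tfrac12$ as $\nu\to\infty$, so a $\nu$-independent guarantee forces $\frac{\tilde\eta}{(1-\tilde\eta)^2}\leq\frac12$, i.e.\ $\tilde\eta\leq 2-\sqrt3\approx 0.268$. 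At $\tilde\eta=1/3$ the left-hand side equals $\tfrac34$, while the right-hand side drops below $\tfrac34$ already for $\nu\geq 3$; in the limit your estimate degenerates to $\frac14+\frac16=\frac{5}{12}>\frac13$. So the argument establishes the conclusion only for $\tilde\eta\leq 2-\sqrt3$, not for $\tilde\eta\leq\frac13$.

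The heuristic you offer---that the sign flip $1-\vartheta\mapsto 1+\vartheta$ ``only tightens both prefactors,'' which is why the backward direction tolerates a larger radius---is quantitatively insufficient: the tightening is of order $\vartheta=O(\nu^{-1/2})$ and vanishes for large $\nu$, whereas the deficit at $\tilde\eta=1/3$ is the constant $\frac1{12}$. To close the proof you must either shrink the threshold to $\tilde\eta\leq 2-\sqrt3$ (which costs nothing downstream, since the paper only invokes this lemma with $\tilde\eta<1/4$) or produce an estimate genuinely sharper than \ref{prop:scb_NS}; as written, the proof does not establish the lemma with the stated constant. The remainder of your write-up---the reuse of Lemma \ref{lemma:sd_properties}, the auxiliary function $\varphi$, and the strict-feasibility argument via the Dikin balls---is fine.
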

The proof is essentially identical to the proof of Lemma~\ref{lemma:zplus-in-Nthetaplus}; we omit the details.

 As we assumed that the feasible set is bounded, the analytic $\vx_{\text{ac}}$ center of the feasible region exists, and the sequence of the vectors $\mathbf{x}$ generated by increasing $\tau$ and taking full Newton steps converges to a neighborhood of $\vx_{\text{ac}}$, inside of which the norm of the inverse of the barrier Hessian, $\|H(\vx)^{-1}\|$, is uniformly bounded for all $\vx$. Therefore, using the results of Lemma \ref{lemma:nbh_backwardsIPA},
 we can iterate our Phase 1 algorithm with a neighborhood radius $\tilde\eta<1/4$ until $\tau$ is large enough to satisfy $\frac{1}{\tau}||\mathbf{c}+g(\vx_0)||^*_\vx < \eta-\tilde\eta$, meaning that
 \[ \left\Vert -g(\vx_0) - A^\T \mathbf{y} + \tau g(\mathbf{x}) \right\Vert_{\mathbf{x}}^* \leq \tilde{\eta} \tau \implies \left\Vert \mathbf{c} - A^\T \mathbf{y} + \tau g(\mathbf{x}) \right\Vert_{\mathbf{x}}^* \leq 1/4 \tau,\]
 at which point we can switch to Phase 2, running either variant of our IPA with $\eta=1/4$ as before.

To describe the path switching more precisely: Phase 1 terminates after finite many steps, and if it ends at $(\tilde\vx,\tilde\vy,\tilde\vs)$, with a central path parameter $\tau$, then we can initialize Phase 2 with the triplet
\[(\vx,\vy,\vs) = (\tilde\vx,\tilde\vy,\mathbf{c}-A^\T\tilde\vy)\]
and the same value of $\tau$.
This triplet is automatically a  primal-dual strictly feasible solution,
since by assumption, $\left\Vert \frac{\mathbf{c} - A^\T\tilde\vy}{\tau} + g(\mathbf{x}) \right\Vert_{\mathbf{x}}^* \leq 1/4$,
and therefore $\vs=\mathbf{c} - A^\T\tilde\vy \in \Ksc$, and it is also in the $\mathcal{N}(\eta,\tau)$ neighborhood of the central path of the original problem, as required.

\subsection{Homogeneous self-dual embedding}
\label{sec:HSD}
In the most general case, when the feasibility and boundedness of primal and dual problems are unclear, we may apply a homogeneous self-dual embedding technique \cite{deKlerkRoosTerlaky1997, deKlerk1998, Luo2000}, in conjunction with either variant of our algorithm.

Assume that $\mathbf{\bar{x}} \in \K^\circ$, $\mathbf{\bar{s}} \in \Ksc$ and $\mathbf{\bar{y}} \in \mathbb{R}^m$ are given. (We denote these initial iterates with bars rather than the previously used $(\mathbf{x}_0,\mathbf{y}_0,\mathbf{s}_0)$ to emphasize that they are not feasible points, but arbitrary points in the interiors of their respective cones. We may choose, e.g., $\mathbf{\bar{y}}=\mathbf{0}$ and $\mathbf{\bar{s}}=-g(\mathbf{\bar{x}})$ with any $\mathbf{\bar{x}}\in\K^\circ$.)
Let $\bar{\mathbf{b}} = \mathbf{b} - A \mathbf{\bar{x}}$, $
\bar{\mathbf{c}} = \mathbf{c} - A^\top \bar{\mathbf{y}} - \bar{\mathbf{s}}$ and $\bar{z} = \mathbf{c}^\top \bar{\mathbf{x}} - \mathbf{b}^\top \bar{\mathbf{y}} + 1$, and let
\begin{equation}
\begin{aligned} \label{eq:HSD_mx}
	G =	\begin{pmatrix}
			\mathbf{0} & A & -\mathbf{b} & \phantom{-}\bar{\mathbf{b}} \\
			-A^{\top} & \mathbf{0} & \phantom{-}\mathbf{c} & -\bar{\mathbf{c}}  \\
			\phantom{-}\mathbf{b}^{\top} & -\mathbf{c}^{\top} & \phantom{-}0 & \phantom{-}\bar{z} \\
			-\mathbf{\bar{b}}^{\top} & \phantom{-}\mathbf{\bar{c}}^{\top} &  -\bar{z} & \phantom{-}0   \end{pmatrix}
	\end{aligned}.
\end{equation}
With this notation in place, the homogeneous self-dual (HSD) model can be formulated as follows:
\begin{equation} \left.
 \label{eq:HSD_model}
 \begin{aligned}
\min \ &(\mathbf{\bar{x}}^{\top} \mathbf{\bar{s}} + 1)\theta \\
\textrm{s.t.}\;\;&G \begin{pmatrix}
	\mathbf{y} \\ \mathbf{x} \\ \xi \\ \theta
\end{pmatrix} -
\begin{pmatrix}
	\mathbf{0} \\ \mathbf{s} \\ \kappa \\ 0
\end{pmatrix} =
\begin{pmatrix}
	\mathbf{0} \\ \mathbf{0} \\ 0 \\ -\mathbf{\bar{x}}^{\top} \mathbf{\bar{s}} - 1
\end{pmatrix} \\
 &            (\mathbf{x}, \xi) \in \mathcal{K} \times \mathbb{R}_{+}, \  ( \mathbf{s}, \kappa)\in \mathcal{K}^* \times \mathbb{R}_{+}, \  \mathbf{y} \in \mathbb{R}^m
\end{aligned}
 \right\}.
\end{equation}
The following theorem summarizes the well-known properties of the HSD model:

\begin{theorem}{\cite{deKlerkRoosTerlaky1997,deKlerk1998, Luo2000}} \label{th:HSD_prop}
The HSD model (\ref{eq:HSD_model}) is self-dual, and the point $
(\mathbf{x}, \mathbf{y}, \mathbf{s}, \xi, \theta, \kappa) = (\bar{\mathbf{x}}, \bar{\mathbf{y}}, \bar{\mathbf{s}}, 1, 1, 1)$ is a strictly feasible solution. The optimum value is $0$, and if $\xi > 0$ at an optimal solution, then $(\mathbf{x}/ \xi, \mathbf{y}/ \xi, \mathbf{s} / \xi)$ is an optimal solution to the original primal-dual pair of problems. If $\xi = 0$, the problem is either infeasible, unbounded, or the optimal duality gap is not zero.
\end{theorem}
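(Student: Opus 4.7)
The plan is to prove the four assertions (strict feasibility of the indicated starting point, self-duality, zero optimal value, and the characterization by the sign of $\xi$) in sequence, using as the central structural tool the skew-symmetry $G^\T = -G$, which is immediate by inspection of \eqref{eq:HSD_mx}.

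First, I would verify strict feasibility of $(\bar{\vx}, \bar{\vy}, \bar{\vs}, 1, 1, 1)$ by direct substitution. Using the definitions $\bar{\vb} = \vb - A\bar{\vx}$, $\bar{\vc} = \vc - A^\T\bar{\vy} - \bar{\vs}$, and $\bar{z} = \vc^\T\bar{\vx} - \vb^\T\bar{\vy} + 1$, each of the four block equations in \eqref{eq:HSD_model} collapses to a tautology, and interior cone membership is immediate from $\bar{\vx}\in\K^\circ$, $\bar{\vs}\in\Ksc$, and the positive scalars $\xi=\theta=\kappa=1$. Next, for self-duality, I would form the Lagrangian dual of \eqref{eq:HSD_model}: the dual multipliers for the equality constraint will play the role of $(\vy,\vx,\xi,\theta)$, dual cone memberships match those prescribed in \eqref{eq:HSD_model}, and the skew-symmetry of $G$ converts the dual constraint matrix into $G$ itself (up to sign conventions). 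After relabeling, the dual problem coincides with \eqref{eq:HSD_model}.

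The third claim, that the optimum value equals $0$, is where the key identity enters. Writing $w := (\vy,\vx,\xi,\theta)^\T$, $u := (\mathbf{0},\vs,\kappa,0)^\T$, and letting $v$ be the right-hand side vector, the constraints read $Gw - u = v$; left-multiplying by $w^\T$ and using $w^\T G w = 0$ gives the identity
\[
\vx^\T\vs + \xi\kappa \;=\; (\bar{\vx}^\T\bar{\vs}+1)\,\theta,
\]
so feasibility forces $\theta\geq 0$ and the objective $(\bar{\vx}^\T\bar{\vs}+1)\theta$ is nonnegative throughout the feasible region. To conclude that the infimum $0$ is attained, I would invoke the existence of a strictly complementary optimal solution for this self-dual conic program (the standard construction from \cite{deKlerkRoosTerlaky1997,deKlerk1998,Luo2000}); this is the step I expect to be the main obstacle, since strict complementarity for general nonsymmetric cones is delicate and is precisely the content these references supply.

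Finally, to decode optimal solutions: at any optimum, the identity above with objective $0$ yields $\vx^\T\vs = \xi\kappa = 0$. If $\xi>0$, then $\kappa=0$ and $\theta=0$, so dividing the first three block equations by $\xi$ recovers $A(\vx/\xi)=\vb$, $A^\T(\vy/\xi)+\vs/\xi=\vc$, together with the complementarity $(\vx/\xi)^\T(\vs/\xi)=0$; by \eqref{eq:OC}, $(\vx/\xi,\vy/\xi,\vs/\xi)$ is primal-dual optimal for \eqref{eq:PD_problem}. If instead $\xi=0$, the first three block equations become $A\vx=\mathbf{0}$, $A^\T\vy+\vs=\mathbf{0}$, $\vb^\T\vy=\vc^\T\vx$, which is the classical Farkas-type certificate: case analysis on the sign of $\vc^\T\vx - \vb^\T\vy$ distinguishes primal infeasibility, dual infeasibility (unboundedness), or a nonzero duality gap, exactly as claimed.
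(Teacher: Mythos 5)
The paper does not prove this theorem at all: it is quoted verbatim from the cited literature (\cite{deKlerkRoosTerlaky1997,deKlerk1998,Luo2000}) and used as a black box, so there is no in-paper argument to compare against. Your sketch is the standard proof from those references and is essentially sound: the feasibility check by substitution is correct, and the identity $\vx^\T\vs+\xi\kappa=(\bar{\vx}^\T\bar{\vs}+1)\theta$ obtained from $w^\T Gw=0$ is exactly the relation the paper itself later extracts from skew-symmetry. Two small corrections. First, you do not need strict complementarity to conclude that the optimal value $0$ is attained: self-duality gives $d^*=-p^*$, strict feasibility gives $p^*=d^*$ with attainment, hence $p^*=0$; strict complementarity is only needed for the stronger converse statements (e.g., that the embedding \emph{always} lands in exactly one of the listed cases via a maximally complementary solution), which the theorem as stated does not assert. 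Second, in the $\xi=0$ case the third block equation with $\theta=0$ reads $\vb^\T\vy-\vc^\T\vx=\kappa\geq 0$, not $\vb^\T\vy=\vc^\T\vx$; the trichotomy then comes from whether $\kappa>0$ (a Farkas-type certificate of primal or dual infeasibility/unboundedness) or $\kappa=0$ (the remaining pathological cases, including nonzero duality gap). With those adjustments your outline matches the cited sources.
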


The central path problem for \eqref{eq:HSD_model}, parametrized by the path parameter $\tau>0$, can be defined analogously to \eqref{eq:CP} as follows:
\begin{equation} \left.
\begin{aligned} \label{eq:HSD_CP}
G \begin{pmatrix}
	\mathbf{y} \\ \mathbf{x} \\ \xi \\ \theta
\end{pmatrix} -
\begin{pmatrix}
	\mathbf{0} \\ \mathbf{s} \\ \kappa \\ 0
\end{pmatrix} &=
\begin{pmatrix}
	\mathbf{0} \\ \mathbf{0} \\ 0 \\ -\mathbf{\bar{x}}^{\top} \mathbf{\bar{s}} - 1
\end{pmatrix} \\
 \mathbf{s} + \tau g(\mathbf{x})& = \mathbf{0} \\
 \kappa - \frac{\tau}{\xi} & = 0 \\
    (\mathbf{x}, \xi) \in \mathcal{K}^{\circ} \times \mathbb{R}_{++}, \  ( \mathbf{s}, \kappa)\in & (\mathcal{K}^*)^{\circ} \times \mathbb{R}_{++}, \  \mathbf{y} \in \mathbb{R}^m
\end{aligned} \right\}.
\end{equation}

It can be assumed without loss of generality that $\bar{\mathbf{s}}  = -g(\bar{\mathbf{x}})$. With this assumption, it is easy to show that the strictly feasible solution given in Theorem \ref{th:HSD_prop} is well-centered.
\begin{lemma}
 Assume that $\bar{\mathbf{s}}  = -g(\bar{\mathbf{x}})$ holds. Then the strictly feasible solution $
(\mathbf{x}, \mathbf{y}, \mathbf{s}, \xi, \theta, \kappa) = (\bar{\mathbf{x}}, \bar{\mathbf{y}}, \bar{\mathbf{s}}, 1, 1, 1)$  is on the central path \eqref{eq:HSD_CP}.
\end{lemma}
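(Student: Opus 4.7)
The plan is a direct verification: I need to exhibit a value of the central path parameter $\tau$ for which the candidate point $(\bar{\mathbf{x}}, \bar{\mathbf{y}}, \bar{\mathbf{s}}, 1, 1, 1)$ satisfies all three blocks of conditions in \eqref{eq:HSD_CP}. The natural guess is $\tau = 1$, motivated by the equation $\mathbf{s} + \tau g(\mathbf{x}) = \mathbf{0}$: substituting $\mathbf{s} = \bar{\mathbf{s}} = -g(\bar{\mathbf{x}})$ and $\mathbf{x} = \bar{\mathbf{x}}$ gives $(\tau - 1)g(\bar{\mathbf{x}}) = \mathbf{0}$, which forces $\tau = 1$ since $g(\bar{\mathbf{x}}) \neq \mathbf{0}$ for an LHSCB.

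With $\tau = 1$ fixed, the remaining checks are routine. First, the scalar equation $\kappa - \tau/\xi = 0$ reads $1 - 1/1 = 0$, which holds. Second, the large linear block involving $G$ is precisely the system of feasibility equations for the HSD model \eqref{eq:HSD_model}, and Theorem \ref{th:HSD_prop} already asserts that $(\bar{\mathbf{x}}, \bar{\mathbf{y}}, \bar{\mathbf{s}}, 1, 1, 1)$ is strictly feasible for that model, so the equations hold automatically. Third, the interior conditions $(\mathbf{x},\xi) \in \K^\circ \times \R_{++}$ and $(\mathbf{s},\kappa) \in \Ksc \times \R_{++}$ hold because $\bar{\mathbf{x}} \in \K^\circ$ by hypothesis, $\bar{\mathbf{s}} = -g(\bar{\mathbf{x}}) \in \Ksc$ by the standard LHSCB property (see \ref{prop:lhscb_gradient} in Appendix \ref{sec:lhscb}), and $\xi = \kappa = 1 > 0$.

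There is essentially no obstacle here; the lemma is an observation rather than a theorem, and the only ``content'' is recognizing that the assumption $\bar{\mathbf{s}} = -g(\bar{\mathbf{x}})$ is precisely what is needed to make the centering equation $\mathbf{s} + \tau g(\mathbf{x}) = \mathbf{0}$ hold at $\tau = 1$. The proof should therefore be only a few lines: state $\tau := 1$, verify the three conditions, and invoke Theorem \ref{th:HSD_prop} for strict feasibility.
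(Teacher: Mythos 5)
Your proposal is correct and takes the same route as the paper, whose entire proof is the one-line observation that the equations of \eqref{eq:HSD_CP} hold at the given point with $\tau=1$; your version simply spells out the verification (centering equation via $\bar{\mathbf{s}}=-g(\bar{\mathbf{x}})$, the scalar equation, and strict feasibility via Theorem \ref{th:HSD_prop}). The only cosmetic quibble is that $-g(\bar{\mathbf{x}})\in\Ksc$ is not literally \ref{prop:lhscb_gradient} but the standard fact the paper recalls in Section \ref{sec:dual-membership-init}.
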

\begin{proof}
    The equations of \eqref{eq:HSD_CP} are satisfied for $(\bar{\mathbf{x}}, \bar{\mathbf{y}}, \bar{\mathbf{s}}, 1, 1, 1)$ and $\tau = 1$.
\end{proof}

As the problem (\ref{eq:HSD_model}) is self-dual, we apply Newton's method directly to this problem (rather than expliclitly formulating the dual instance with separate dual variables) to determine the search directions. The Newton system is the following:
\begin{equation} \left.
\begin{aligned} \label{eq:HSD_NS}
G
    \begin{pmatrix}
        \Delta \mathbf{y} \\ \Delta \mathbf{x} \\ \Delta \xi \\ \Delta \theta
    \end{pmatrix}
    -
    \begin{pmatrix}
        \mathbf{0} \\ \Delta \mathbf{s} \\ \Delta \kappa \\ 0
    \end{pmatrix} &=
\begin{pmatrix}
    \mathbf{0} \\ \mathbf{0} \\ 0 \\ 0
\end{pmatrix} \\
\tau H(\mathbf{x}) \Delta \mathbf{x} + \Delta \mathbf{s} &= - \mathbf{s} - \tau g (\mathbf{x}) \\
\frac{\tau}{\xi^2} \Delta \xi + \Delta \kappa &= - \kappa + \frac{\tau}{\xi}
\end{aligned} \right\}.
\end{equation}

Either variant of our IPA can be applied to solve \eqref{eq:HSD_model}, but instead of \eqref{eq:NS}, we need to solve \eqref{eq:HSD_NS} to determine the search directions.  The analysis can be adapted easily for the new Newton system. Introduce the new variables
\[ \tilde{\mathbf{x}} = \begin{pmatrix}
    \mathbf{x} \\ \xi
\end{pmatrix} \text{ and } \tilde{\mathbf{s}} = \begin{pmatrix}
    \mathbf{s} \\ \kappa
\end{pmatrix}\]
and their respective cones: $\tilde{\mathbf{x}} \in \tilde{\mathcal{K}} := \mathcal{K} \times \mathbb{R}_+$ and $\tilde{\mathbf{s}} \in \tilde{\mathcal{K}}^* := \mathcal{K}^* \times \mathbb{R}_+$. Given a $\nu$-LHSCB $f$ for $\K$, the extended cone $\tilde{\mathcal{K}}$ is the domain of the $(\nu+1)$-LHSCB
$\tilde{f}(\tilde{\mathbf{x}}) = f(\mathbf{x}) - \ln \xi$. We denote the gradient of $\tilde{f}$ by $\tilde{g}$.

It follows from the skew-symmetry of $G$

that $\Delta \mathbf{x}^{\top} \Delta \mathbf{s} + \Delta \xi \Delta \kappa = 0$, that is, $\Delta \tilde{\mathbf{x}}^{\top} \Delta \tilde{\mathbf{s}} = 0$, analogusly to the first claim of Lemma \ref{lemma:sd_properties}.
With this orthogonality result in place, the rest of the arguments in Section \ref{sec:full_step} leading up to Theorem \ref{theorem:compl1} can be repeated verbatim, with $\tilde{\mathbf{x}}, \tilde{\mathbf{s}}, \tilde{g}$, etc.~playing the roles of ${\mathbf{x}}, {\mathbf{s}}, g$, etc., to show that Algorithm \ref{alg:simple_short_step} with the Newton system \eqref{eq:HSD_NS} finds an $\varepsilon$-optimal solution to \eqref{eq:HSD_model} in $\mathcal{O}(\sqrt{\nu}\log \frac{\mathbf{\bar{x}}^\T\mathbf{\bar{s}}}{\varepsilon})$ iterations.

Algorithms \ref{alg:largest_update} and \ref{alg:real_largest_update} can be adapted similarly. In the definition of $\tau^+$, we need to use the new variables $\tilde{\mathbf{x}}$ and $\tilde{\mathbf{s}}$, and the local norm induced by the Hessian of $\tilde{f}(\tilde{\mathbf{x}})$.

It is important to note that in this manner we can only expect to get $\varepsilon$-optimal solutions to the HSD-model \eqref{eq:HSD_model}, that is, a strictly feasible solution to \eqref{eq:HSD_model} satisfying $\mathbf{x}^\T \mathbf{s} + \xi \kappa < \varepsilon$, which may not yield an $\hat\varepsilon$-optimal solution to the original primal-dual problem pair for an \emph{a priori} known $\hat\varepsilon$ even in the case when the original problem has optimal solutions, in the absence of any \emph{a priori} positive lower bound on the optimal value of the homogenizing parameter $\xi$.

Different stopping criteria for the HSD model \eqref{eq:HSD_model} were analyzed in several papers, see \cite{Freund2006, NaldiSinn2021, PolikTerlaky2009, Nesterov1999}, to name a few. In our implementation we have used the criteria proposed by Pólik and Terlaky \cite{PolikTerlaky2009}, which are theoretically sound stopping criteria based on a generalization of the ``Approximate Farkas Lemma'' of Todd and Ye \cite{ToddYe1998} to conic optimization. To be able to apply their results here, we need to prove that their assumption \cite[(3.1)]{PolikTerlaky2009}
\begin{equation}
\label{eq:PT_condition}
\kappa \xi \geq (1-\beta) \theta
\end{equation}
is satisfied for all iterates of our algorithms with a fixed constant $\beta \in (0,1)$.

\begin{lemma}
    Assume that $\bar{\mathbf{s}}  = -g(\bar{\mathbf{x}})$ holds. Then
    the P\'olik-Terlaky condition \eqref{eq:PT_condition} is satisfied  throughout all algorithm variants with $\beta := 1 - (1-\eta) \omega <1$, where
    $\omega = \min_k \frac{\tau_k}{\tau_{k-1}}$, where $\tau_k$ is the value of $\tau$ at the end of the $k^{\text{th}}$ iteration, and the minimum is taken over all iterations $k$.
\end{lemma}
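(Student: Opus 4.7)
The plan is to combine three facts: (i) an HSD-specific duality identity, (ii) the consequence of the neighborhood condition on the homogenizing coordinates, and (iii) the HSD analogue of Lemma \ref{lemma:DG}.

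First, the skew-symmetry of $G$ gives, for every feasible iterate of \eqref{eq:HSD_model}, the identity
\[ \mathbf{x}^\T\mathbf{s} + \xi\kappa = (\bar{\mathbf{x}}^\T\bar{\mathbf{s}} + 1)\theta = (\nu+1)\theta, \]
where the last equality uses $\bar{\mathbf{s}}=-g(\bar{\mathbf{x}})$ and the logarithmic homogeneity of $f$, which yields $\bar{\mathbf{x}}^\T\bar{\mathbf{s}} = \nu$. To obtain the first equality, multiply the block of equalities defining \eqref{eq:HSD_model} on the left by $(\mathbf{y}^\T,\mathbf{x}^\T,\xi,\theta)$; the quadratic form $(\cdot)^\T G(\cdot)$ vanishes by skew-symmetry, leaving exactly the claimed identity after rearrangement.

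Second, since $\tilde{H}(\tilde{\mathbf{x}})^{-1}$ is block-diagonal with blocks $H(\mathbf{x})^{-1}$ and $\xi^2$, the dual local norm decomposes as $\|(\mathbf{v},w)\|^{*\,2}_{\tilde{\mathbf{x}}} = (\|\mathbf{v}\|^*_{\mathbf{x}})^2 + (w\xi)^2$. Projecting the neighborhood condition $\|\tilde{\mathbf{s}}+\tau\tilde{g}(\tilde{\mathbf{x}})\|^*_{\tilde{\mathbf{x}}} \leq \eta\tau$ onto its last coordinate (where $\tilde{g}(\tilde{\mathbf{x}})$ has entry $-1/\xi$) yields $|\kappa\xi - \tau| \leq \eta\tau$, and in particular the lower bound $\kappa\xi \geq (1-\eta)\tau$ whenever the iterate lies in $\mathcal{N}(\eta,\tau)$. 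Separately, the HSD analogue of Lemma \ref{lemma:DG} yields $(\tilde{\mathbf{x}}^+)^\T \tilde{\mathbf{s}}^+ \leq \tau(\nu+1)$ after a full Newton step solved at parameter $\tau$, so in combination with the identity from the previous step, the new $\theta$ satisfies $\theta \leq \tau_{\text{NS}}$, where $\tau_{\text{NS}}$ denotes the parameter used in that Newton system.

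To conclude, at the end of iteration $k$ the iterate lies in $\mathcal{N}(\eta,\tau_k)$, so $\kappa_k\xi_k \geq (1-\eta)\tau_k$. For Algorithms \ref{alg:simple_short_step} and \ref{alg:largest_update}, where $\tau_{\text{NS}}=\tau_{k-1}$, this gives
\[ \frac{\kappa_k\xi_k}{\theta_k} \geq \frac{(1-\eta)\tau_k}{\tau_{k-1}} \geq (1-\eta)\omega = 1-\beta. \]
For Algorithm \ref{alg:real_largest_update}, $\tau_{\text{NS}}=\tau_k$, and the analogous chain yields the sharper bound $\kappa_k\xi_k/\theta_k \geq 1-\eta \geq 1-\beta$. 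The initial iterate trivially satisfies $\kappa\xi = \theta = 1$. The only real subtlety is the bookkeeping of the two different $\tau$ values involved: the pre-step one controlling $\theta$ via the HSD duality-gap bound, and the post-step one controlling the lower bound on $\kappa\xi$; their ratio being at least $\omega$ is precisely what produces the factor $(1-\eta)\omega$ in $1-\beta$.
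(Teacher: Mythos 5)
Your proposal is correct and follows essentially the same route as the paper's proof: extract $|\kappa\xi-\tau_k|\le\eta\tau_k$ from the homogenizing block of the neighborhood condition, use skew-symmetry of $G$ together with $\bar{\mathbf{x}}^\top\bar{\mathbf{s}}=\nu$ to identify $\theta$ with the normalized duality gap, bound that gap by the pre-update $\tau$ via Lemma \ref{lemma:DG}, and combine through the ratio $\tau_k/\tau_{k-1}\ge\omega$. Your explicit bookkeeping of which $\tau$ enters the Newton system in each algorithm variant is a minor refinement the paper leaves implicit, but it does not change the argument.
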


\begin{proof}
    Let $(\vx,\vy,\vs,\xi,\theta,\kappa)$ be the iterate in the $k^{\text{th}}$ iteration (immediately after the Newton step) and $\mu=(\vx^\T\vs+\xi\kappa)/(\nu+1)$ be the corresponding normalized duality gap. As the iterates generated by the IPA remain in the neighborhood $\mathcal{N}(\eta, \tau_k)$ in the $k^{\text{th}}$ iteration, we have
    \[
    \begin{aligned}
        \eta^2 (\tau_k)^2 & \geq \| \tilde{ \mathbf{s} } + \tau_k \tilde{g} (\tilde{\mathbf{x}}) \|_{\tilde{\mathbf{x}}}^* = \\
        & = \begin{pmatrix}
            \mathbf{s} + \tau_k g(\mathbf{x}) \\
            \kappa - \tau_k / \xi
        \end{pmatrix}^{\top}
        \begin{pmatrix}
            H(\mathbf{x})^{-1} & 0 \\
            0 & \xi^2
        \end{pmatrix}
        \begin{pmatrix}
            \mathbf{s} + \tau_k g(\mathbf{x}) \\
            \kappa - \tau_k / \xi
        \end{pmatrix} = \\
        & = \left( \| \mathbf{s} + \tau_k g(\mathbf{x} ) \|_{\mathbf{x}}^* \right)^2 + \xi^2 (\kappa - \tau_k / \xi)^2.
    \end{aligned}
    \]
    Therefore, $| \kappa \xi - \tau_k | \leq \eta \tau_k$, implying $(1-\eta) \tau_k \leq \kappa \xi$.

    From \eqref{eq:HSD_model} and the skew-symmetry of $G$ it follows that $ \theta = \frac{\mathbf{x}^{\top} \mathbf{s} + \kappa \xi}{\bar{\mathbf{x}}^{\top} \bar{\mathbf{s}} +1}$. Using our assumption $\bar{\mathbf{s}}  = -g(\bar{\mathbf{x}})$ and \ref{prop:lhscb_lh}, we have $\bar{\mathbf{x}}^\T \bar{\mathbf{s}} = \nu$. Therefore, $\mu =  \theta$, that is, the value of the variable $\theta$ equals to the normalized duality gap $\mu$ in every iteration.

    Finally, from Lemma \ref{lemma:DG}, it follows that $\mu\leq\tau
    _{k-1}$ immediately after the Newton step (but before the step that updates $\tau$).

   Combining these bounds, we obtain the following bound in every iteration:
     \[ \kappa \xi \geq (1- \eta) \tau_k = (1 - \eta) \frac{\tau_k}{\tau_{k-1}} \tau_{k-1} \geq (1-\eta) \omega \mu = (1-\eta) \omega \theta = (1-\beta)\theta. \]
\end{proof}

\section{Application: Sums-of-squares lower bounds and dual nonnegativity certificates}
\label{sec:SOS-application}

We illustrate the efficiency and numerical stability of our algorithms in our motivating application from Section \ref{sec:intro}: computing moment-sums-of-squares lower bounds and exact sum-of-squares certificates for positive polynomials. We compare our IPAs applied directly to a nonsymmetric cone programming formulation of the problem to state-of-the-art semidefinite programming (SDP) solvers applied to an equivalent, but much larger, SDP formulation of the same problem.

Let us briefly recall the relevant theory: given a polynomial $f\in\R[\vx]$ and a basic, closed, and bounded semialgebraic set $\Delta$ of the form
\begin{equation}
\label{eq:Delta-def}
    \Delta = \left\{\mathbf{x} \in\R^n\,|\, g_i(\vx)\geq 0, i=1,\dots,m\right\},
\end{equation}
with $g_i\in\R[\vx]$, the minimum of $f$ over $\Delta$ is clearly certified to be at least $\gamma$ if $f-\gamma$ can be written as follows:
\begin{equation}
\label{eq:fminusgamma-SOS}
f(\vx) - \gamma = \sigma_0(\vx)+\sum_{i=1}^m g_i(\vx)\sigma_i(\vx),
\end{equation}
where each $\sigma_i\in\R[\vx]$ is a sum of perfect squares.
Under an additional assumption on the representation \eqref{eq:Delta-def} of $\Delta$, called the \emph{Archimedean property} \cite[Sec. 3.4.4]{BlekhermanParriloThomas2013}, the following onverse also holds. Let $SOS_d$ be the (finite dimensional) cone of polynomials of degree at most $d$ that can be written as a sum of perfect squares, define the degree-$d$ \emph{truncated quadratic module} as
\[ \mathcal{M}_d(\mathbf{g}) := \left\{\sigma_0(\vx)+\sum_{i=1}^m g_i(\vx)\sigma_i(\vx)\,\middle|\, \sigma_0 \in SOS_d,\, \sigma_i \in SOS_{d-\deg g_i} (i=1,\dots,m) \right\}, \]
and let
\begin{equation}
\label{eq:gammadstar-def}
\gamma^*_d := \sup\{\gamma\,|\,f-\gamma\in \mathcal{M}_d(\mathbf{g})\}.
\end{equation}
Putinar's Positivstellensatz \cite{Putinar1993} states that
\[
 \lim_{d\to\infty}\gamma^*_d =  \min_{\vx\in\Delta}f(\vx).
\]
This provides a recipe for computing a convergent series of lower bounds (called \emph{SOS lower bounds}) on the global minimum of the (not necessarily convex) polynomial $f$ by solving a series of increasingly large convex optimization problems (known as the \emph{SOS hierarchy}).

Under the Archimedean assumption, the convergence of the sequence $(\gamma^*_d)_{d=1,2,\dots}$ is ``generically'' finite in a sense precisely defined in \cite{Nie2014}, but might be infinite even if $f$ is univariate \cite{Stengle1996}, and in practice, the degree necessary to certify a sharp bound may be much higher than the degrees of $f$ and $g_i$ \cite{BaldiMourrain2023}. On the other hand, using high-degree polynomials $\sigma_i$ in the representation \eqref{eq:fminusgamma-SOS} may lead to numerical problems; this can be mitigated by representing the polynomials in an interpolant basis instead of the more commonly employed monomial basis \cite{Papp2017}.

The set $\mathcal{M}_d(\vg)$ is by definition a finite-dimensional convex cone, but optimization over it is not obviously tractable. In particular, no tractable LHSCB is known for this set. It is, however, a projected spectrahedron \cite[Chapter 6]{BlekhermanParriloThomas2013}, which means that linear optimization problems over it (such as the problem \eqref{eq:gammadstar-def} defining $\gamma_d^*$) can be formulated as SDPs \cite{Parrilo2000}, but only at the cost of substantially increasing the number of optimization variables. For example, in the univariate case, $SOS_d$ is a $(d+1)$-dimensional cone, but its semidefinite representation involves $(d+1)\times(d+1)$ real symmetric matrices, effectively squaring the number of optimization variables. As another illustration, consider the case when $m$ is large: the value of $m$ has no bearing on the dimension of $\mathcal{M}_d(\vg)$, but the SDP representation grows linearly with $m$.

Despite the lack of tractable LHSCBs, we can employ certain nonsymmetric conic IPAs to compute SOS lower bounds and optimality certificates, because tractable LHSCBs are known for the dual cone $\mathcal{M}_d(\vg)^*$; see, e.g., \cite{PappYildiz2019}. The sequence of lower bounds obtained by optimizing over $\mathcal{M}_d(\vg)^*$, particularly in connection with SDP, is also referred to as Lasserre's \emph{moment-SOS hiearchy} \cite{Lasserre2001}. To summarize this approach, we need some additional notation. Let $\R_{n,d}$ be the space of $n$-variate polynomials of degree at most $d$. The dual cone of  $\mathcal{M}_d(\vg)$, sometimes called the \emph{pseudo-moment cone}, is defined
as
\[
\mathcal{M}_d(\vg)^*=\left\{ \ell\in\mathbb{R}_{n,d}^{*}\,\middle|\,\;\ell(\sigma)\geq0\,\,\forall\sigma\in\mathcal{M}_d(\vg)\right\},
\]
where $\R_{n,d}^*$ is the space of
linear functionals on $\R_{n,d}$. (Since we are working with finite dimensional spaces, all linear functionals are automatically bounded.) If $\mathcal{M}_{d}(\vg)$ is a proper cone, then by strong duality, $\gamma_d^*$ is also the optimal value of the dual problem of \eqref{eq:gammadstar-def}:
\begin{equation}
\label{eq:gammadstar-dualdef}
\inf \{ \ell(f)\,|\, \ell(1) = 1,\; \ell\in\mathcal{M}_d(\vg)^* \}.
\end{equation}

Because our cones are finite dimensional, the polynomials $f$ and $1$ can be represented by their coefficient vectors $\mathbf{f}\in\R^N$ and $\mathbf{1}\in\R^N$, respectively, in a fixed (ordered) basis; similarly, each $\ell\in\mathcal{M}_d(\vg)^*$ can be represented by a vector $\bm\lambda\in\R^N$ describing its action on the same ordered basis. In this manner, the dual definition \eqref{eq:gammadstar-dualdef} of $\gamma_d^*$ can be written as follows:
\begin{equation}\label{eq:SOS-lb-dual}
    \left. \begin{aligned}
    \min \ & \mathbf{f}^\T \bm\lambda \\
    \st\,\ & \mathbf{1}^\T\bm\lambda = 1 \\
    & \bm\lambda\in\mathcal{M}_{d}(\vg)^*
\end{aligned} \right\}.
\end{equation}
Conventionally, this problem would be solved as an SDP, using the semidefinite representation of $\mathcal{M}_d(\vg)^*$, a well-known spectrahedron \cite{Lasserre2001,Laurent2008}. However, \eqref{eq:SOS-lb-dual} is also an optimization problem of the form studied in Section \ref{sec:two-phase}: a standard form conic optimization problem with a single equality constraint, whose coefficient vector $\mathbf{1}$ belongs to $\mathcal{M}_d(\vg)^\circ$ for every $d$. Thus, we can circumvent the SDP and compute $\gamma_d^*$ with our IPAs instead, solving \eqref{eq:SOS-lb-dual} directly as written. And as we mentioned in Section \ref{sec:intro}, the numerical vectors $\bm\lambda$ computed with our IPAs are \emph{dual nonnegativity certificates} in the sense of \cite{DavisPapp2022, DavisPapp2024}, meaning that an exact, rational sums-of-squares representation of $f-\hat\gamma_d$ in the form \eqref{eq:fminusgamma-SOS} can be computed \emph{in closed-form} from the computed numerical approximation $\hat\gamma_d$ of $\gamma_d^*$ and the computed near-optimal $\bm\lambda$. The details of this certification process are beyond the scope of this short summary, but it suffices to say here that by \cite[Thm.~2.4]{DavisPapp2022}, if $(\bm\lambda,\gamma,\mathbf{f}-\gamma\mathbf{1})\in\mathcal{N}(1,\tau)$ (for any $\tau>0$), then $\bm\lambda$ is a certificate proving the nonnegativity of $f-\gamma$ (without having to construct an explicit SOS representation to verify this fact). Therefore, we may stop our IPAs at any time, and the current iterate $\bm\lambda$ can be used to certify the current bound, as long as the numerical errors remain small enough that the iterates (which theoretically stay within $\mathcal{N}(1/4,\tau)$) remain at least in $\mathcal{N}(1,\tau)$. We underscore that the same would not be true if we used a different neighborhood definition in our IPAs.

We shall illustrate all of the above using a seemingly innocuous, but surprisingly challenging instance.
\begin{example}[\cite{Stengle1996,BaldiSlot2024}]
\label{ex:challenge1D}
Let $m=n=1$, $f(x)=1-x^2$, and $g_1(x)=(1-x^2)^3$. Then $\Delta=[-1,1]$ and $\min_{x\in\Delta}f(x) = 0$, the optimizers are $x=\pm 1$. The quadratic module associated with $\{g_1\}$ is Archimedean:
\[(1 - x^2)^2 + \left(x(-2 + x^2)\right)^2 + (1 - x^2)^3 = 2-x^2 \in \mathcal{M}_6(\vg).\]
The exact bounds $\gamma_d^*$ from the Lasserre hierarchy are not known for this instance, but a simple degree-parity argument shows that this problem does not have finite convergence, meaning that altough $\lim_{d\to\infty}\gamma_d^* = 0$, the value of every $\gamma_d^*$ is strictly negative.

More involved analysis reveals that to reach $\gamma_d^* \geq -\varepsilon$, we need a degree $d$ of order $\Omega(\varepsilon^{-1/2})$ at least \cite[Thm.~4]{Stengle1996}. Below, we present numerical evidence for a new conjecture: the exact values of the SOS lower bounds in this instance are given by $\gamma_{2d}^* = -1/(d(d-2))$.
\end{example}

Table \ref{tbl:challenge1D-bounds} shows numerical results comparing SOS lower bounds computed by solving \eqref{eq:SOS-lb-dual} using our IPAs, along with the solutions of the equivalent SDP formulations computed with SeDuMi and Mosek. To improve the conditioning of the formulations, all polynomials were represented as Chebyshev interpolants, both in the nonsymmetric formulations and in the equivalent SDPs, as previously proposed in, e.g., \cite{LofbergParrilo2004, Papp2017}.

Both IPAs were implemented with the largest update strategy (Algorithm \ref{alg:real_largest_update}), using line search. Since \eqref{eq:SOS-lb-dual} has a single equality constraint with the coefficient vector in the interior of the dual cone, the two-phase method proposed in Subsection \ref{sec:two-phase} can be applied to
determine a well-centered and strictly feasible starting point for the feasible IPA. For Example \ref{ex:challenge1D}, this approach was highly effective and the first phase always produced a suitable starting point in 13--17 iterations, virtually independent of the problem size.

To avoid constructing a strictly feasible starting point, we also implemented the self-dual embedding technique. As shown by our numerical results, the feasible variant was more effective in both running time and the quality of the obtained lower bounds.

The experiments were carried out on a MacBook Pro with an Apple M3 CPU and 24 GB of unified memory. The algorithms were implemented in MATLAB R2024b. We formulated the SDP models using YALMIP version R20230622 \cite{YALMIP}, and solved with Mosek version 10.2.7 \cite{mosek10} and SeDuMi 1.3.7. \cite{sedumi}. The timing results refer to solver times (excluding the times YALMIP took to process the models and call the solvers).

\paragraph{Results} SeDuMi reports numerical errors from $d=60$, and although Mosek and our IPAs all claim to have found near-optimal, near-feasible solutions, there is a large disagreement between the bounds from $d=80$ onwards. As detailed below, the reported ``optimal'' values from both SDP solvers are incorrect from $d \geq 80$; in contrast, the feasible 2-phase IPA and its HSD variant both yield high-quality bounds up to approximately $d=600$.

Note that in Table \ref{tbl:challenge1D-bounds} we reported \emph{dual} objective values, as these are supposed to be lower bounds on the optimal values of \eqref{eq:gammadstar-def} and \eqref{eq:SOS-lb-dual}, that is, they ought to be valid global lower bounds on $\gamma_d^*$, and thus on $f$.

It is easy to verify (rigorously, in rational arithmetic) that the computed primal solutions $\bm\lambda$ are indeed strictly feasible solutions, after rescaling them to satisfy the only equality constraint exactly. This also yields a certified, rational \emph{upper} bound on the true value of $\gamma_d^*$. For example, in the $d=80$ case, it can be certified using the solution returned by our 2-phase feasible IPA that $-1/\gamma_d^*  \leq 1520.000032$. This confirms that the SDP lower bounds cannot be sharp.

\begin{table}[]
\resizebox{\textwidth}{!}{
\begin{tabular}{@{}crrrrrrrr@{}}
\toprule
\textbf{} & \multicolumn{2}{c}{\textbf{2-phase IPA}} & \multicolumn{2}{c}{\textbf{IPA with HSD}} & \multicolumn{2}{c}{\textbf{Mosek}} & \multicolumn{2}{c}{\textbf{SeDuMi}} \\ \midrule
\textbf{d} & \multicolumn{1}{c}{\textbf{dObj}} & \multicolumn{1}{c}{\textbf{-1/dObj}} & \multicolumn{1}{c}{\textbf{dObj}} & \multicolumn{1}{c}{\textbf{-1/dObj}} & \multicolumn{1}{c}{\textbf{dObj}} & \multicolumn{1}{c}{\textbf{-1/dObj}} & \multicolumn{1}{c}{\textbf{dObj}} & \multicolumn{1}{c}{\textbf{-1/dObj}} \\ \midrule
\textbf{20} & -1.250000E-02 & 79.999838 & -1.250000E-02 & 80.000021 & -1.250001E-02 & 79.999954 & -1.250001E-02 & 79.999964 \\
\textbf{40} & -2.777800E-03 & 359.998865 & -2.777780E-03 & 360.000020 & -2.777784E-03 & 359.999181 & -2.777830E-03 & 359.993258 \\
\textbf{60} & -1.190500E-03 & 839.998546 & -1.190480E-03 & 840.000042 & -1.190548E-03 & 839.949616 & -1.190552E-03 & 839.946371 \\
\textbf{80} & -6.580000E-04 & 1,519.998800 & -6.578950E-04 & 1,520.000021 & -7.230688E-04 & 1,382.994260 & -6.825633E-04 & 1,465.065584 \\
\textbf{100} & -4.170000E-04 & 2,399.997850 & -4.166670E-04 & 2,399.999972 & -4.385459E-04 & 2,280.263024 & -4.558204E-04 & 2,193.846524 \\
\textbf{120} & -2.870000E-04 & 3,479.990060 & -2.873560E-04 & 3,479.999717 & -4.202670E-04 & 2,379.439737 & -3.166395E-04 & 3,158.165674 \\
\textbf{140} & -2.100000E-04 & 4,759.984770 & -2.100840E-04 & 4,759.999279 & -4.115541E-04 & 2,429.814209 & -3.682441E-04 & 2,715.590012 \\
\textbf{160} & -1.600000E-04 & 6,239.979070 & -1.602560E-04 & 6,239.998055 & -3.657808E-04 & 2,733.877776 & -1.711278E-04 & 5,843.585905 \\
\textbf{180} & -1.260000E-04 & 7,919.962130 & -1.262630E-04 & 7,919.996061 & -3.366643E-04 & 2,970.317910 & -1.429270E-04 & 6,996.578673 \\
\textbf{200} & -1.020000E-04 & 9,799.935480 & -1.020410E-04 & 9,799.993284 & -3.508363E-04 & 2,850.332192 & -1.156350E-04 & 8,647.900722 \\
\textbf{400} & -2.530000E-05 & 39,599.555200 & -2.525260E-05 & 39,599.827255 & -1.518160E-04 & 6,586.921010 & -6.482170E-05 & 15,426.932648 \\
\textbf{600} & -1.120000E-05 & 89,398.411500 & -1.118580E-05 & 89,399.223922 & -7.604590E-05 & 13,149.952857 & -7.689500E-06 & 130,047.467326 \\
\textbf{800} & -6.280000E-06 & 159,191.174000 & -6.281400E-06 & 159,200.068639 & -4.913750E-05 & 20,351.055711 & 1.306050E-05 & -76566.747062 \\
\textbf{1000} & -4.020000E-06 & 248,972.289000 & -4.016060E-06 & 249,000.230609 & -2.195120E-05 & 45,555.596049 & 9.723700E-06 & -102841.510947 \\
\textbf{1200} & -2.790000E-06 & 358,686.431000 & -2.787040E-06 & 358,803.168207 & \multicolumn{1}{c}{--} & \multicolumn{1}{c}{--} & 6.180700E-06 & -161793.971557 \\
\textbf{1400} & -2.050000E-06 & 488,307.483000 & -2.046640E-06 & 488,604.678218 & \multicolumn{1}{c}{--} & \multicolumn{1}{c}{--} & 2.964000E-06 & -337381.916329 \\
\textbf{1600} & -1.570000E-06 & 637,951.582000 & -1.566220E-06 & 638,478.164902 & \multicolumn{1}{c}{--} & \multicolumn{1}{c}{--} & \multicolumn{1}{c}{--} & \multicolumn{1}{c}{--} \\
\textbf{1800} & -1.240000E-06 & 807,073.298000 & -1.236920E-06 & 808,458.887837 & \multicolumn{1}{c}{--} & \multicolumn{1}{c}{--} & \multicolumn{1}{c}{--} & \multicolumn{1}{c}{--} \\
\textbf{2000} & -1.000000E-06 & 996,751.362000 & -1.001130E-06 & 998,874.189340 & \multicolumn{1}{c}{--} & \multicolumn{1}{c}{--} & \multicolumn{1}{c}{--} & \multicolumn{1}{c}{--} \\ \bottomrule
\end{tabular}
}
\caption{Near-optimal dual objective function values (numerical moment-SOS lower bounds) for the polynomial optimization problem in Example \ref{ex:challenge1D} in support of our conjecture that \mbox{$\gamma_{2d}^*=-1/(d(d-2))$}. The ``2-phase IPA'' method is Alg.~\ref{alg:real_largest_update} with the two-phase variant outlined in Sec.~\ref{sec:two-phase}; ``IPA with HSD'' is Alg.~\ref{alg:real_largest_update} in conjunction with the self-dual embedding of Sec.~\ref{sec:HSD}. The Mosek and SeDuMi columns represent solutions computed using the standard SOS SDP formulation (representing all polynomials in the Chebyshev interpolant basis). Missing values indicate insufficient memory. Both IPAs show excellent agreement with our conjecture up to approximately $d=600$ and reasonable agreement up to $d=2000$; Mosek and SeDuMi are incorrect from $d=80$ onwards.}
\label{tbl:challenge1D-bounds}
\end{table}

In Table \ref{tbl:challenge1D-times}, we compare the running times of each of the solvers. Although the incorrect bounds make the comparisons somewhat moot, it is worthwhile to point out that solving the nonsymmetric conic formulations of these problems scales better than their SDP counterparts, underscoring another key motivation behind the development of nonsymmetric conic solvers \cite{CoeyKapelevichVielma2022,PappYildiz2022}.

\begin{table}[]
\resizebox{\textwidth}{!}{
\begin{tabular}{@{}crrrrrrrr@{}}
\toprule
\textbf{} & \multicolumn{2}{c}{\textbf{2-phase IPA}} & \multicolumn{2}{c}{\textbf{IPA with HSD}} & \multicolumn{2}{c}{\textbf{Mosek}} & \multicolumn{2}{c}{\textbf{SeDuMi}} \\ \midrule
\textbf{d} & \multicolumn{1}{c}{\textbf{Iterations}} & \multicolumn{1}{c}{\textbf{Time}} & \multicolumn{1}{c}{\textbf{Iterations}} & \multicolumn{1}{c}{\textbf{Time}} & \multicolumn{1}{c}{\textbf{Iterations}} & \multicolumn{1}{c}{\textbf{Time}} & \multicolumn{1}{c}{\textbf{Iterations}} & \multicolumn{1}{c}{\textbf{Time}} \\ \midrule
\textbf{20} & 85 & 0.19 & 74 & 0.12 & 14 & 0.01 & 21 & 0.48 \\
\textbf{40} & 126 & 0.28 & 134 & 0.19 & 20 & 0.02 & 28 & 0.23 \\
\textbf{60} & 149 & 0.42 & 146 & 0.30 & 20 & 0.05 & 33 & 0.39 \\
\textbf{80} & 160 & 0.49 & 151 & 0.56 & 21 & 0.13 & 31 & 0.43 \\
\textbf{100} & 170 & 0.62 & 153 & 0.52 & 28 & 0.33 & 32 & 0.93 \\
\textbf{120} & 163 & 0.82 & 169 & 0.61 & 28 & 0.54 & 35 & 2.26 \\
\textbf{140} & 164 & 0.87 & 175 & 0.75 & 23 & 0.76 & 32 & 3.14 \\
\textbf{160} & 170 & 1.08 & 188 & 1.00 & 28 & 1.36 & 41 & 6.33 \\
\textbf{180} & 171 & 1.92 & 191 & 1.37 & 22 & 1.71 & 41 & 10.58 \\
\textbf{200} & 171 & 2.19 & 209 & 2.02 & 25 & 2.30 & 41 & 14.65 \\
\textbf{400} & 314 & 15.19 & 349 & 16.95 & 16 & 16.34 & 32 & 140.51 \\
\textbf{600} & 353 & 36.84 & 383 & 41.19 & 24 & 108.39 & 31 & 664.20 \\
\textbf{800} & 357 & 64.71 & 405 & 127.67 & 22 & 324.22 & 37 & 2398.18 \\
\textbf{1000} & 397 & 118.35 & 465 & 225.34 & 24 & 869.12 & 46 & 7203.89 \\
\textbf{1200} & 422 & 192.70 & 490 & 335.51 & -- & -- & 42 & 13669.90 \\
\textbf{1400} & 436 & 302.84 & 527 & 532.86 & -- & -- & 40 & 24508.19 \\
\textbf{1600} & 442 & 447.44 & 505 & 732.90 & -- & -- & -- & -- \\
\textbf{1800} & 406 & 527.35 & 515 & 867.49 & -- & -- & -- & -- \\
\textbf{2000} & 464 & 815.18 & 499 & 1,004.33 & -- & -- & -- & -- \\ \bottomrule
\end{tabular}
}
\caption{Total running times (in seconds) and numbers of iteration for the computations presented in Table \ref{tbl:challenge1D-bounds}. For the 2-phase IPA, the iterations of both phases are included. Our IPAs were run until failure due to numerical errors, resulting in a higher than usual number of iterations. Regardless, for larger values of $d$, the time and memory complexity of the nonsymmetric IPAs scale better than SDP, matching the theoretical predictions.}
\label{tbl:challenge1D-times}
\end{table}

\section{Concluding remarks}

Most convex cones $\K$ whose membership problem is not known to be NP-hard have the property that either $\K$ or its dual $\K^*$ has a known LHSCB whose gradient and Hessian can be computed efficiently. This means that the algorithms presented in this paper are among the most generally applicable second-order methods for conic optimization. This does not come at a price of inefficiency either, as the iteration complexity of the methods matches those of the commonly used IPAs for symmetric cone programming.

The HSD variant of the method is the most general one, applicable to any problem whose primal or dual feasibility status is unknown. We opted for the less commonly used extended form of HSD, as it is more directly adaptable to feasible methods and comes with theoretically sound and quantitative stopping criteria for the detection of large optimal solutions. A drawback of using HSD is that its complexity can only be described in terms of the achieved tolerance for the embedded model, but in the absence of any \emph{a priori} positive lower bound on the optimal value of the homogenizing parameter $\xi$, a near-feasible and near-optimal solution to the embedded model may not yield a similarly near-feasible and near-optimal solution to the original problem even in the ideal case, when the original problem has an optimal solution of small norm.

In contrast, the initialization approaches studied in Sections \ref{sec:dual-membership-init} and \ref{sec:two-phase} come with polynomial-time performance guarantees, and yield near-optimal and very nearly feasible solutions. In our computations, the number of Phase 1 iterations was practically constant, which means that initialization was not a significant overhead. In addition, in our experiments, we found the feasible two-phase method to be numerically more reliable than HSD.

More detailed comparisons of the different variants of these IPAs, practical performance enhancements, and numerical comparisons to other nonsymmetric cone programming algorithms are the subject of a separate study.

\bibliographystyle{siamplain}
\bibliography{nonsymmetric}

\begin{thebibliography}{10}

\bibitem{AhmadiJavid2012}
{\sc A.~Ahmadi-Javid}, {\em {E}ntropic {V}alue-at-{R}isk: A new coherent risk measure}, Journal of Optimization Theory and Applications, 155 (2012), pp.~1105--1123, \url{https://doi.org/10.1007/s10957-011-9968-2}.

\bibitem{BadenbroekDahl2022}
{\sc R.~Badenbroek and J.~Dahl}, {\em An algorithm for nonsymmetric conic optimization inspired by {MOSEK}}, Optimization Methods and Software, 37 (2022), pp.~1027--1064, \url{https://doi.org/10.1080/10556788.2021.1882457}.

\bibitem{BaldiMourrain2023}
{\sc L.~Baldi and B.~Mourrain}, {\em On the effective {P}utinar’s {P}ositivstellensatz and moment approximation.}, Mathematical Programming, 200 (2023), pp.~71--103, \url{https://doi.org/10.1007/s10107-022-01877-6}.

\bibitem{BaldiSlot2024}
{\sc L.~Baldi and L.~Slot}, {\em Degree bounds for {P}utinar's {P}ositivstellensatz on the hypercube}, SIAM Journal on Applied Algebra and Geometry, 8 (2024), pp.~1--25, \url{https://doi.org/10.1137/23M1555430}.

\bibitem{dsdp}
{\sc S.~J. Benson and Y.~Ye}, {\em {DSDP}--software for semidefinite programming}, ACM Transactions on Mathematical Software, 34 (2008), p.~16, \url{https://doi.org/10.1145/1356052.1356057}.

\bibitem{BlekhermanParriloThomas2013}
{\sc G.~Blekherman, P.~A. Parrilo, and R.~R. Thomas}, eds., {\em Semidefinite optimization and convex algebraic geometry}, {SIAM}, Philadelphia, PA, 2013, \url{https://doi.org/10.1137/1.9781611972290}.

\bibitem{csdp}
{\sc B.~Borchers}, {\em {CSDP}, a {C} library for semidefinite programming}, Optimization Methods and Software, 11-2 (1999), pp.~613--623, \url{https://doi.org/10.1080/10556789908805765}.

\bibitem{ChandrasekaranShah2016}
{\sc V.~Chandrasekaran and P.~Shah}, {\em Relative entropy relaxations for signomial optimization}, SIAM Journal on Optimization, 26 (2016), pp.~1147--1173, \url{https://doi.org/10.1137/140988978}.

\bibitem{CoeyKapelevichVielma2022}
{\sc C.~Coey, L.~Kapelevich, and J.~P. Vielma}, {\em Solving natural conic formulations with {H}ypatia.jl}, INFORMS Journal on Computing, 34 (2022), pp.~2686--2699, \url{https://doi.org/10.1287/ijoc.2022.1202}.

\bibitem{CoeyKapelevichVielma2023}
{\sc C.~Coey, L.~Kapelevich, and J.~P. Vielma}, {\em Performance enhancements for a generic conic interior point algorithm}, Mathematical Programming Computation, 15 (2023), pp.~53--101, \url{https://doi.org/10.1007/s12532-022-00226-0}.

\bibitem{DahlAndersen2022}
{\sc J.~Dahl and E.~D. Andersen}, {\em A primal-dual interior-point algorithm for nonsymmetric exponential-cone optimization}, Mathematical Programming, 194 (2022), pp.~341--370, \url{https://doi.org/10.1007/s10107-021-01631-4}.

\bibitem{DavisPapp2022}
{\sc M.~M. Davis and D.~Papp}, {\em Dual certificates and efficient rational sum-of-squares decompositions for polynomial optimization over compact sets}, SIAM Journal on Optimization, 32 (2022), pp.~2461--2492, \url{https://doi.org/10.1137/21M1422574}.

\bibitem{DavisPapp2024}
{\sc M.~M. Davis and D.~Papp}, {\em Rational dual certificates for weighted sums-of-squares polynomials with boundable bit size}, Journal of Symbolic Computation, 121 (2024), p.~102254, \url{https://doi.org/10.1016/j.jsc.2023.102254}.

\bibitem{deKlerkRoosTerlaky1997}
{\sc E.~de~Klerk, C.~Roos, and T.~Terlaky}, {\em Initialization in semidefinite programming via a self-dual skew-symmetric embedding}, Operations Research Letters, 20 (1997), pp.~213--221, \url{https://doi.org/10.1016/S0167-6377(97)00011-4}.

\bibitem{deKlerk1998}
{\sc E.~de~Klerk, C.~Roos, and T.~Terlaky}, {\em Infeasible start semidefinite programming algorithms via self-dual embeddings}, in Topics in Semidefinite and Interior Point Methods, H.~Wolkowicz and P.~M. Pardalos, eds., vol.~18 of Fields Institute Communications, American Mathematical Society, Providence, RI, 1998, pp.~215--236, \url{https://doi.org/10.1090/fic/018/15}.

\bibitem{DomahidiChuBoyd2013}
{\sc A.~Domahidi, E.~K.-W. Chu, and S.~P. Boyd}, {\em {ECOS}: An {SOCP} solver for embedded systems}, in 2013 European Control Conference (ECC), IEEE, 2013, pp.~3071--3076, \url{https://doi.org/10.23919/ECC.2013.6669541}.

\bibitem{Freund2006}
{\sc R.~M. Freund}, {\em On the behavior of the homogeneous self-dual model for conic convex optimization}, Mathematical Programming, 106 (2006), pp.~527--545, \url{https://doi.org/10.1007/s10107-005-0667-3}.

\bibitem{Gonzaga1997}
{\sc C.~C. Gonzaga}, {\em The largest step path following algorithm for monotone linear complementarity problems}, Mathematical Programming, 76 (1997), pp.~309--332, \url{https://doi.org/10.1007/BF02614443}.

\bibitem{Guler1996}
{\sc O.~G{\"u}ler}, {\em Barrier functions in interior point methods}, Mathematics of Operations Research, 21 (1996), pp.~860--885, \url{https://doi.org/10.1287/moor.21.4.860}.

\bibitem{loraine}
{\sc S.~Habibi, M.~Ko\v{c}vara, and M.~Stingl}, {\em Loraine – an interior-point solver for low-rank semidefinite programming}, Optimization Methods and Software, 39 (2024), pp.~1185--1215, \url{https://doi.org/10.1080/10556788.2023.2250522}.

\bibitem{HeSaundersonFawzi2024}
{\sc K.~He, J.~Saunderson, and H.~Fawzi}, {\em {QICS}: Quantum {I}nformation {C}onic {S}olver}, arXiv preprint arXiv:2410.17803,  (2024), \url{https://doi.org/10.48550/arXiv.2410.17803}.

\bibitem{KarimiTuncel2024}
{\sc M.~Karimi and L.~Tun\c{c}el}, {\em {Domain-Driven Solver} {(DDS)} version 2.1: a {MATLAB}-based software package for convex optimization problems in domain-driven form}, Mathematical Programming Computation, 16 (2024), pp.~37--92, \url{https://doi.org/10.1007/s12532-023-00248-2}.

\bibitem{Lasserre2001}
{\sc J.~B. Lasserre}, {\em Global optimization with polynomials and the problem of moments}, SIAM Journal on Optimization, 11 (2001), pp.~796--817, \url{https://doi.org/10.1137/S1052623400366802}.

\bibitem{Laurent2008}
{\sc M.~Laurent}, {\em Sums of squares, moment matrices, and optimization over polynomials}, in Emerging Applications of Algebraic Geometry, M.~Putinar and S.~Sullivant, eds., Springer, 2008, pp.~157--270, \url{https://doi.org/10.1007/978-0-387-09686-5_7}.

\bibitem{YALMIP}
{\sc J.~L{\"{o}}fberg}, {\em {YALMIP} : A {T}oolbox for {M}odeling and {O}ptimization in {MATLAB}}, in In Proceedings of the CACSD Conference, Taipei, Taiwan, 2004, \url{https://doi.org/10.1109/CACSD.2004.1393890}.

\bibitem{LofbergParrilo2004}
{\sc J.~L{\"{o}}fberg and P.~A. Parrilo}, {\em From coefficients to samples: a new approach to {SOS} optimization}, in 43rd {IEEE} Conference on Decision and Control, vol.~3, IEEE, 2004, pp.~3154--3159, \url{https://doi.org/10.1109/CDC.2004.1428957}.

\bibitem{Luo2000}
{\sc Z.-Q. Luo, J.~F. Sturm, and S.~Zhang}, {\em Conic convex programming and self-dual embedding}, Optimization Methods and Software, 14 (2000), pp.~169--218, \url{https://doi.org/10.1080/10556780008805800}.

\bibitem{mosek10}
{\sc {MOSEK ApS}}, {\em {MOSEK} documentation and {API} reference (version 10.2)}, 2024, \url{https://www.mosek.com/documentation/}.

\bibitem{MyklebustTuncel2014}
{\sc T.~Myklebust and L.~Tun{\c{c}}el}, {\em Interior-point algorithms for convex optimization based on primal-dual metrics}, arXiv preprint arXiv:1411.2129,  (2014), \url{https://doi.org/10.48550/arXiv.1411.2129}.

\bibitem{NaldiSinn2021}
{\sc S.~Naldi and R.~Sinn}, {\em Conic programming: {I}nfeasibility certificates and projective geometry}, Journal of Pure and Applied Algebra, 225 (2021), p.~106605, \url{https://doi.org/10.1016/j.jpaa.2020.106605}.

\bibitem{Nesterov2018}
{\sc Y.~Nesterov}, {\em Lectures on Convex Optimization}, Springer, 2nd~ed., 2001, \url{https://doi.org/10.1007/978-3-319-91578-4}.

\bibitem{Nesterov2012}
{\sc Y.~Nesterov}, {\em Towards non-symmetric conic optimization}, Optimization Methods and Software, 27 (2012), pp.~893--917, \url{https://doi.org/10.1080/10556788.2011.567270}.

\bibitem{Nesterov1994}
{\sc Y.~Nesterov and A.~Nemirovskii}, {\em Interior-point polynomial algorithms in convex programming}, SIAM, 1994, \url{https://doi.org/10.1137/1.9781611970791}.

\bibitem{NesterovTodd1997}
{\sc Y.~Nesterov and M.~J. Todd}, {\em Self-scaled barriers and interior-point methods for convex programming}, Mathematics of Operations Research, 22 (1997), pp.~1--42, \url{https://doi.org/10.1287/moor.22.1.1}.

\bibitem{NesterovTodd1998}
{\sc Y.~Nesterov and M.~J. Todd}, {\em Primal-dual interior-point methods for self-scaled cones}, SIAM Journal on Optimization, 8 (1998), pp.~324--364, \url{https://doi.org/10.1137/S1052623495290209}.

\bibitem{Nesterov1999}
{\sc Y.~Nesterov, M.~J. Todd, and Y.~Ye}, {\em Infeasible-start primal-dual methods and infeasibility detectors for nonlinear programming problems}, Mathematical Programming, 84 (1999), pp.~227--267, \url{https://doi.org/10.1007/s10107980009a}.

\bibitem{Nie2014}
{\sc J.~Nie}, {\em Optimality conditions and finite convergence of {L}asserre's hierarchy}, Mathematical Programming, Series A, 146 (2014), pp.~97--121, \url{https://doi.org/10.1007/s10107-013-0680-x}.

\bibitem{Papp2017}
{\sc D.~Papp}, {\em Semi-infinite programming using high-degree polynomial interpolants and semidefinite programming}, SIAM Journal on Optimization, 27 (2017), pp.~1858--1879, \url{https://doi.org/10.1137/15M1053578}.

\bibitem{PappYildiz2017}
{\sc D.~Papp and S.~Y{\i}ld{\i}z}, {\em On ``{A} homogeneous interior-point algorithm for nonsymmetric convex conic optimization''}, arXiv preprint arXiv:1712.00492,  (2017), \url{https://doi.org/10.48550/arXiv.1712.00492}.

\bibitem{PappYildiz2019}
{\sc D.~Papp and S.~Y{\i}ld{\i}z}, {\em Sum-of-squares optimization without semidefinite programming}, SIAM Journal on Optimization, 29 (2019), pp.~822--851, \url{https://doi.org/10.1137/17M1160124}.

\bibitem{PappYildiz2022}
{\sc D.~Papp and S.~Y{\i}ld{\i}z}, {\em Alfonso: {M}atlab package for nonsymmetric conic optimization}, INFORMS Journal on Computing, 34 (2022), pp.~11--19, \url{https://doi.org/10.1287/ijoc.2021.1058}.

\bibitem{Parrilo2000}
{\sc P.~A. Parrilo}, {\em Structured Semidefinite Programs and Semialgebraic Geometry Methods in Robustness and Optimization}, PhD thesis, California Institute of Technology, May 2000, \url{https://doi.org/10.7907/2K6Y-CH43}.

\bibitem{PolikTerlaky2009}
{\sc I.~P{\'o}lik and T.~Terlaky}, {\em New stopping criteria for detecting infeasibility in conic optimization}, Optimization Letters, 3 (2009), pp.~187--198, \url{https://doi.org/10.1007/s11590-008-0100-y}.

\bibitem{Potra2012}
{\sc F.~A. Potra}, {\em Weighted complementarity problems---a new paradigm for computing equilibria}, SIAM Journal on Optimization, 22 (2012), pp.~1634--1654, \url{https://doi.org/10.1137/110837310}.

\bibitem{Putinar1993}
{\sc M.~Putinar}, {\em Positive polynomials on compact semi-algebraic sets}, Indiana University Mathematics Journal, 42 (1993), pp.~969--984, \url{https://doi.org/10.1512/iumj.1993.42.42045}.

\bibitem{Renegar2001}
{\sc J.~Renegar}, {\em A mathematical view of interior-point methods in convex optimization}, SIAM, 2001, \url{https://doi.org/10.1137/1.9780898718812}.

\bibitem{SchmietaAlizadeh2003}
{\sc S.~H. Schmieta and F.~Alizadeh}, {\em Extension of primal-dual interior point algorithms to symmetric cones}, Mathematical Programming Ser. A, 96 (2003), pp.~409--438, \url{https://doi.org/10.1007/s10107-003-0380-z}.

\bibitem{Serrano2015}
{\sc S.~A. Serrano}, {\em Algorithms for unsymmetric cone optimization and an implementation for problems with the exponential cone}, Stanford University, 2015.

\bibitem{SkajaaYe2015}
{\sc A.~Skajaa and Y.~Ye}, {\em A homogeneous interior-point algorithm for nonsymmetric convex conic optimization}, Mathematical Programming, 150 (2015), pp.~391--422, \url{https://doi.org/10.1007/s10107-014-0773-1}.

\bibitem{Stengle1996}
{\sc G.~Stengle}, {\em Complexity estimates for the {S}chm{\"u}dgen {P}ositivstellensatz}, Journal of Complexity, 12 (1996), pp.~167--174, \url{https://doi.org/10.1006/jcom.1996.0011}.

\bibitem{sedumi}
{\sc J.~F. Sturm}, {\em Using {SeDuMi} 1.02, a {M}atlab toolbox for optimization over symmetric cones}, Optimization Methods and Software, 11--12 (1999), pp.~625--653, \url{https://doi.org/10.1080/10556789908805766}.

\bibitem{ToddYe1998}
{\sc M.~J. Todd and Y.~Ye}, {\em Approximate {F}arkas lemmas and stopping rules for iterative infeasible-point algorithms for linear programming}, Mathematical Programming, 81 (1998), pp.~1--21, \url{https://doi.org/10.1007/BF01584841}.

\bibitem{sdpt3}
{\sc K.-C. Toh, M.~J. Todd, and R.~H. T{\"u}t{\"u}nc{\"u}}, {\em {SDPT3} --- a {Matlab} software package for semidefinite programming, version 1.3}, Optimization Methods and Software, 11--12 (1999), pp.~545--581, \url{https://doi.org/10.1080/10556789908805762}.

\bibitem{Tuncel2001}
{\sc L.~Tun{\c{c}}el}, {\em Generalization of primal—dual interior-point methods to convex optimization problems in conic form}, Foundations of Computational Mathematics, 1 (2001), pp.~229--254, \url{https://doi.org/10.1007/s002080010009}.

\bibitem{sdpa}
{\sc M.~Yamashita, K.~Fujisawa, and M.~Kojima}, {\em Implementation and evaluation of {SDPA} 6.0 ({S}emidefinite {P}rogramming {A}lgorithm 6.0)}, Optimization Methods and Software, 18 (2003), pp.~491--505, \url{https://doi.org/10.1080/1055678031000118482}.

\end{thebibliography}

\appendix

\section{Self-concordant barrier functions and local norms}
\label{sec:lhscb}
The notion of self-concordant barriers, originally introduced by Nesterov and Nemirovski \cite[Chapter 2]{Nesterov1994}, are quite broadly known at least to experts of interior-point algorithms, but for completeness and for the ease of referencing a few specific properties used throughout the paper, we have collected a few key results about them in this Appendix. For a more in-depth study, the reader is recommended to confer the excellent monographs \cite{Renegar2001} and \cite[Chapter 5]{Nesterov2018}.

\begin{definition}[Self-concordant barrier]
Let $\K\subseteq\Rn$ be a convex cone with a non-empty interior $\K^\circ$. A function $f:\K^\circ\to\R$ is called a \emph{strongly nondegenerate, self-concordant barrier} for $\K$ if $f$ is strictly convex, at least three times differentiable, and has the following two properties (with $\operatorname{bd}(\K)$ denoting the boundary of $\K$, and $g$ and $H$ denoting the gradient and Hessian of $f$):
\begin{enumerate}[\hspace{1em} 1.]
\item $f(\vx)\to\infty$  as $\vx\to\operatorname{bd}(\K)$,
\item $|D^3 f(\vx)[\vh,\vh,\vh]| \leq 2\left(D^2f(\vx)[\vh,\vh]\right)^{3/2}$,
\item $\nu := \sup_{\vx\in\K^\circ} g(\vx)^\T H(\vx)^{-1}g(\vx) < \infty.$
\end{enumerate}
We refer to the parameter $\nu$ above as the \emph{barrier parameter} of $f$; we also use the shorthand \emph{$\nu$-SCB} to mean ``an SCB with barrier parameter $\nu$''.
\end{definition}

\begin{definition}[Logarithmic homogeneity and LHSCBs]
    Let $f: \ \mathcal{K}^{\circ} \rightarrow \mathbb{R}$ be a $\nu$-SCB function for the proper convex cone $ \mathcal{K}$.  We say that $f$ is a \emph{logarithmically homogeneous self-concordant barrier (LHSCB)} with barrier parameter $\nu$ for $\mathcal{K}$, or a \emph{$\nu$-LHSCB} for short, if it satisfies the \emph{logarithmic homogeneity property}
    \[ f(t \mathbf{x}) = f(\mathbf{x}) - \nu \ln (t) \quad \text{for all } \mathbf{x} \in \mathcal{K}^{\circ} \text{ and } t > 0.\]
\end{definition}

We associate with each $\vx\in\K^\circ$ the \emph{local inner product} $\langle\cdot,\cdot\rangle_\vx : \K \times \K \to \R$ defined as $\langle \vu,\vv\rangle_\vx := \vu^\T H(\vx) \vv$ and the \emph{local norm} $\|\cdot\|_\vx$ induced by this local inner product. Thus, $\|\vv\|_\vx = \|H(\vx)^{1/2}\vv\|$. We define the local (open) ball centered at $\vx$ with radius $r$ by $\mathcal{B}(\vx,r) := \{\vv\in\R^n\,|\, \|\vv-\vx\|_{\vx} < r\}$. When $r=1$, these balls are also called the \emph{Dikin ellipsoids}. Crucially, for every $\vx\in\K^\circ$, we have $\mathcal{B}(\vx,1)\subset\K^\circ$.

Analogously, we define the \emph{dual local inner product} $\langle \cdot,\cdot \rangle_\vx^*: \K^*\times\K^*\to\R$ by  $\langle \vs, \vt \rangle_\vx^* := \vs^\T H(\vx)^{-1}\vt$, its induced \emph{dual local norm} $\|\cdot\|_{\vx}^*$, and the corresponding open balls $\mathcal{B}^*(\vx,r)$. The dual local norm, by definition, satisfies the identity $\|\vt\|_{\vx}^* = \|H(\vx)^{-1/2}\vt\|$.

The most important properties of SCBs and LHSCBs applied in the analysis of our algorithms are compiled in the next two Propositions.

\begin{proposition}[\protect{\cite[Chapters 2--3]{Renegar2001}, \cite[Chapter 5]{Nesterov2018}, and \cite[Lemma 1]{Nesterov2012}}]
    Let $f$ be a $\nu$-SCB for the proper convex cone $\mathcal{K}$, let $\vx\in\K^\circ$, and let $\mathbf{n} (\mathbf{x} ) = - H (\mathbf{x})^{-1} g (\mathbf{x})$ be the (unconstrained) Newton step for $f$ at $\mathbf{x}$.
       \setlist[enumerate,1]{label=\textup{(A\arabic*)}, leftmargin=5em, labelsep=0.5em, itemindent=0pt}
    \begin{enumerate}
        \item \label{prop:scb_NS}
        If $\|\mathbf{n}(\mathbf{x})\|_{\mathbf{x}} < 1$, then $\mathbf{x}^+ := \mathbf{x} + \mathbf{n} (\mathbf{x})$ satisfies
\[
\|\mathbf{n} (\mathbf{x}^+)\|_{\mathbf{x}^+} \leq \left( \frac{\|\mathbf{n}(\mathbf{x})\|_{\mathbf{x}}}{1 - \|\mathbf{n}(\mathbf{x})\|_{\mathbf{x}}} \right)^2.
\]
    \item \label{prop:scb_norm1}
   If $\mathbf{u} \in \mathcal{B}_{\mathbf{x}}(\mathbf{x},1)$, then $\vu\in\K^\circ$, and for all $\mathbf{v} \neq \mathbf{0}$, one has
\[
\frac{\|\mathbf{v}\|_{\mathbf{u}}^*}{\|\mathbf{v}\|_{\mathbf{x}}^*} \leq \frac{1}{1 - \|\mathbf{u} - \mathbf{x}\|_{\mathbf{x}}}.
\]
\item \label{prop:scb_norm2} Let $\mathbf{x}, \mathbf{u} \in \mathcal{K}^\circ$, and let $r:= \| \mathbf{x} - \mathbf{u} \|_\mathbf{u} < 1$. Then
\[ \| g(\mathbf{x}) - g(\mathbf{u}) - H(\mathbf{u}) (\mathbf{x} - \mathbf{u}) \|_{\mathbf{u}}^* \leq \frac{r^2}{1-r}. \]
    \end{enumerate}
\end{proposition}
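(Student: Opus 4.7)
The plan is to base everything on a single one-dimensional consequence of self-concordance and then derive the three parts in the order (A2), (A3), (A1), since each is used in the next. Fix $\vx \in \K^\circ$ and a direction $\vh$ with $\|\vh\|_\vx = 1$, and define $\phi(t) := D^2 f(\vx + t\vh)[\vh,\vh]$ on the interval where $\vx + t\vh \in \K^\circ$. The self-concordance inequality applied at $\vx + t\vh$ gives $|\phi'(t)| \leq 2\,\phi(t)^{3/2}$, equivalently $\left|\tfrac{d}{dt}\phi(t)^{-1/2}\right| \leq 1$. Integrating and using $\phi(0) = 1$ produces $(1-|t|)^2 \leq \phi(t) \leq (1-|t|)^{-2}$ on $|t| < 1$. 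Rewritten in coordinate form, this yields the Hessian comparison
\begin{equation*}
(1 - r)^2\,H(\vx) \preceq H(\vu) \preceq (1 - r)^{-2}\,H(\vx), \qquad r := \|\vu - \vx\|_\vx < 1,
\end{equation*}
which in particular proves $\mathcal{B}(\vx,1) \subset \K^\circ$ and is the workhorse behind everything that follows.

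For (A2), inverting the Hessian comparison gives $H(\vu)^{-1} \preceq (1-r)^{-2} H(\vx)^{-1}$, so for any $\vv \neq \mathbf{0}$,
\begin{equation*}
(\|\vv\|_\vu^*)^2 = \vv^\T H(\vu)^{-1}\vv \leq (1 - r)^{-2}\,\vv^\T H(\vx)^{-1}\vv = (1 - r)^{-2}(\|\vv\|_\vx^*)^2,
\end{equation*}
and taking square roots delivers the claim.

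For (A3), I would apply the fundamental theorem of calculus to $t \mapsto g(\vu + t(\vx - \vu))$, giving
\begin{equation*}
g(\vx) - g(\vu) - H(\vu)(\vx - \vu) = \int_0^1 \bigl[H(\vu + t(\vx-\vu)) - H(\vu)\bigr](\vx - \vu)\, dt.
\end{equation*}
Taking the $\vu$-dual norm, pulling it inside the integral, and exploiting the fact that $\|\vu + t(\vx - \vu) - \vu\|_\vu = tr < 1$, the Hessian comparison (now centered at $\vu$) bounds the integrand by a quantity that integrates to $r^2/(1-r)$; the cleanest route is to reduce to the scalar function $t \mapsto \langle \vx - \vu, g(\vu + t(\vx - \vu)) \rangle$ and differentiate twice, so that the self-concordance bound on the third derivative directly yields the integral estimate.

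For (A1), the defining equation $H(\vx)\mathbf{n}(\vx) = -g(\vx)$ lets us recognize $g(\vx^+)$ as exactly the linearization residual:
\begin{equation*}
g(\vx^+) = g(\vx) + H(\vx)\mathbf{n}(\vx) + \bigl[g(\vx^+) - g(\vx) - H(\vx)(\vx^+ - \vx)\bigr] = g(\vx^+) - g(\vx) - H(\vx)(\vx^+ - \vx).
\end{equation*}
Setting $r := \|\mathbf{n}(\vx)\|_\vx < 1$ and applying (A3) with the roles of $\vx$ and $\vu$ swapped gives $\|g(\vx^+)\|_\vx^* \leq r^2/(1-r)$. Then (A2) transports this estimate to the $\vx^+$-dual norm at the cost of a factor $1/(1-r)$, and the identity $\|\mathbf{n}(\vx^+)\|_{\vx^+} = \|H(\vx^+)^{-1} g(\vx^+)\|_{\vx^+} = \|g(\vx^+)\|_{\vx^+}^*$ (a direct computation using that $H(\vx^+)$ is positive definite) closes the argument with the announced bound $r^2/(1-r)^2$.

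The main technical obstacle is the derivation of the scalar differential inequality and its integrated form leading to the Hessian comparison; once that is in place, (A2) is essentially immediate, (A3) is a one-line Taylor remainder estimate with a careful norm change, and (A1) is the standard Newton-step algebra. The only subtlety in (A1) is to verify up-front that $\vx^+ \in \K^\circ$ (so that $g(\vx^+)$ and $H(\vx^+)$ make sense), which is guaranteed by the Dikin ellipsoid containment $\mathcal{B}(\vx,1) \subset \K^\circ$ established together with the Hessian comparison.
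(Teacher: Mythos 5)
The paper does not actually prove this proposition: it is stated as a compilation of standard facts with pointers to Renegar, Nesterov's book, and Nesterov (2012). So there is no in-paper argument to compare against; you were effectively reconstructing the textbook proofs, and your outline follows the standard route (Hessian comparison as the workhorse, then (A2), (A3), (A1) in that order, each feeding the next). The logic of the three parts checks out: inverting the Loewner comparison gives (A2); the integral remainder formula together with $\int_0^1\bigl((1-tr)^{-2}-1\bigr)\,dt = r/(1-r)$ and $\|\vx-\vu\|_\vu=r$ yields exactly $r^2/(1-r)$ for (A3); and in (A1) the identities $g(\vx)+H(\vx)\mathbf{n}(\vx)=\mathbf{0}$ and $\|\mathbf{n}(\vx^+)\|_{\vx^+}=\|g(\vx^+)\|_{\vx^+}^*$, combined with (A3) based at $\vx$ and (A2) to pass from $\|\cdot\|_\vx^*$ to $\|\cdot\|_{\vx^+}^*$, give the stated $r^2/(1-r)^2$.

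Two steps are thinner than the argument can afford. First, the scalar bound $(1-|t|)^2\le\phi(t)\le(1-|t|)^{-2}$ with $\phi(t)=\vh^\T H(\vx+t\vh)\vh$ controls the Hessian along the segment \emph{only in the direction $\vh$}; it is not a ``coordinate form'' of the full ordering $(1-r)^2H(\vx)\preceq H(\vu)\preceq(1-r)^{-2}H(\vx)$. To get the matrix inequality you must invoke the polarized self-concordance bound $|D^3f(\vy)[\vh,\vw,\vw]|\le 2\|\vh\|_\vy\|\vw\|_\vy^2$ (which does not follow by mere rewriting from the diagonal inequality in the definition) and run the same ODE argument on $\psi(t)=\vw^\T H(\vx+t\vh)\vw$ for arbitrary $\vw$, using the diagonal case to bound $\|\vh\|_{\vx+t\vh}$. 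Second, in (A3) you still need to convert the two-sided comparison $(1-tr)^2H(\vu)\preceq H(\vu+t(\vx-\vu))\preceq(1-tr)^{-2}H(\vu)$ into the local operator-norm estimate $\|(H(\vu+t(\vx-\vu))-H(\vu))\vw\|_\vu^*\le\bigl((1-tr)^{-2}-1\bigr)\|\vw\|_\vu$; this is exactly the step your sketch waves at. Both are standard lemmas (Renegar Thm.~2.2.1, Nesterov Thm.~5.1.7), and once they are supplied the proof is complete; note also that the containment $\mathcal{B}(\vx,1)\subset\K^\circ$, which you correctly flag as needed for $g(\vx^+)$ to be defined in (A1), uses the barrier (blow-up) property in addition to the differential inequality.
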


\begin{proposition}[\protect{\cite[Chapters 2--3]{Renegar2001}, \cite[Chapter 5]{Nesterov2018}}]
    Let $f$ be a $\nu$-LHSCB for the proper convex cone $\mathcal{K}$ and let $\vx\in\K^{\circ}$. Then:
       \setlist[enumerate,1]{label=\textup{(A\arabic*)}, leftmargin=5em, labelsep=0.5em, itemindent=0pt, start=4}
    \begin{enumerate}
        \item \label{prop:lhscb_Hessian} $H(\mathbf{x}) \mathbf{x} = - g(\mathbf{x})$;
        \item \label{prop:lhscb_gradient} $ g(\mathbf{x})^{\top} \mathbf{x} = - \nu$;
        \item \label{prop:lhscb_lh} $g( \alpha \mathbf{x}) = \alpha^{-1} g(\mathbf{x})$ and $H( \alpha \mathbf{x} ) = \alpha^{-2} H(\mathbf{x})$ for all $\alpha > 0$;
        \item  \label{prop:lhscb_norm} $\| g(\mathbf{x}) \|_x^* = \sqrt{\nu}$.
    \end{enumerate}
\end{proposition}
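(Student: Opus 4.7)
The plan is to derive all four claims from a single source: the logarithmic homogeneity identity $f(t\vx) = f(\vx) - \nu \ln t$, which holds for every $\vx \in \K^\circ$ and every $t > 0$. Properties \ref{prop:lhscb_Hessian}--\ref{prop:lhscb_norm} all fall out of differentiating this identity in either the $\vx$-variable or the $t$-variable, together with a short computation using the definition of the dual local norm.

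First, fix $t > 0$ and take the gradient of both sides of the identity with respect to $\vx$. By the chain rule, the left-hand side yields $t\, g(t\vx)$ and the right-hand side yields $g(\vx)$, so $g(t\vx) = t^{-1} g(\vx)$. Differentiating this new identity once more in $\vx$ gives $H(t\vx) = t^{-2} H(\vx)$. Together these prove \ref{prop:lhscb_lh}.

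Next, differentiate the identity $g(t\vx) = t^{-1} g(\vx)$ with respect to $t$: the chain rule produces $H(t\vx)\vx$ on the left and $-t^{-2} g(\vx)$ on the right. Evaluating at $t = 1$ yields $H(\vx)\vx = -g(\vx)$, which is \ref{prop:lhscb_Hessian}. To obtain \ref{prop:lhscb_gradient}, I would differentiate the original identity directly with respect to $t$: the left side becomes $g(t\vx)^\T \vx$ and the right side $-\nu/t$, so substituting $t = 1$ gives $g(\vx)^\T \vx = -\nu$.

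Finally, \ref{prop:lhscb_norm} follows by combining the previous two. Rewriting \ref{prop:lhscb_Hessian} as $H(\vx)^{-1} g(\vx) = -\vx$ and substituting into the definition of the dual local norm yields
\[
\bigl(\|g(\vx)\|_\vx^*\bigr)^2 = g(\vx)^\T H(\vx)^{-1} g(\vx) = -g(\vx)^\T \vx = \nu,
\]
where the last equality uses \ref{prop:lhscb_gradient}; taking square roots gives the claim. The argument is essentially a chain-rule computation, so I do not anticipate a serious obstacle; the only place that requires genuine care is keeping track of the factors of $t$ introduced at each differentiation step, and verifying that the differentiation of $f(t\vx)$ and $g(t\vx)$ with respect to a variable other than the one at which each function is naturally evaluated is justified by the smoothness assumptions built into the definition of a self-concordant barrier.
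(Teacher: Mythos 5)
Your derivation is correct, and each of the four chain-rule computations checks out (the smoothness needed to differentiate is guaranteed by the three-times-differentiability in the definition of an SCB). The paper does not prove this proposition at all — it is stated as a citation of known results from Renegar and Nesterov — and your argument is precisely the standard derivation found in those references: everything follows from differentiating the logarithmic homogeneity identity in $\vx$ and in $t$, plus the definition of the dual local norm.
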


\section{Stopping Algorithm \ref{alg:simple_short_step} with a damped Newton step}
\label{sec:alg3_damped}

Here we collect a few technical results about a variant of Algorithm \ref{alg:simple_short_step} with damped Newton steps, used in the analysis of the Phase 1 method of Section \ref{sec:two-phase}.

\begin{lemma}
\label{lemma:damped_2eta}
    Let $(\mathbf{x}, \mathbf{y}, \mathbf{s}) \in \mathcal{N}(\eta, \tau)$ and let $(\Delta \mathbf{x}, \Delta \mathbf{y}, \Delta \mathbf{s})$ be the solution of \eqref{eq:NS}. Let $\mathbf{x}^+ := \mathbf{x} + \alpha \Delta \mathbf{x}$, $\mathbf{s}^+ := \mathbf{s} + \alpha \Delta \mathbf{s}$, and $\mathbf{y}^+ := \mathbf{y} + \alpha \Delta \mathbf{y}$, where $\alpha \in (0, 1]$. Let $\tau^+ := (1-\vartheta)\tau$, where $\vartheta= \frac{\eta/2}{\sqrt{\nu} + 1}$ and $0<\eta \leq 1/4$ as before in Algorithm \ref{alg:simple_short_step}. Then $\| \mathbf{s}^+ + \tau^+ g(\mathbf{x}^+) \|_{\mathbf{x}^+}^* \leq 2 \eta \tau^+$.
\end{lemma}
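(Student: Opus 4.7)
The strategy is to adapt the proof of Lemma \ref{lemma:zplus-in-Nthetaplus} to accommodate a damped Newton step, at the cost of a factor of two in the neighborhood bound. The main source of looseness (compared to the full-step case) is an extra term $(1-\alpha)(\vs+\tau g(\vx))$ that does not vanish when $\alpha<1$, and this is ultimately what requires us to relax $\eta$ to $2\eta$ on the right-hand side.

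First, I will split $\vs^+ + \tau^+ g(\vx^+) = [\vs^+ + \tau g(\vx^+)] - \vartheta\tau g(\vx^+)$ and apply the triangle inequality in the local dual norm at $\vx^+$. The second piece is straightforward: $\vartheta\tau\|g(\vx^+)\|^*_{\vx^+} = \vartheta\tau\sqrt{\nu}$ by \ref{prop:lhscb_norm}, and dividing by $\tau^+=(1-\vartheta)\tau$ gives $\frac{\vartheta}{1-\vartheta}\sqrt{\nu}\leq \eta/2$ exactly as in the proof of Lemma \ref{lemma:zplus-in-Nthetaplus}.

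The heart of the argument is the first piece. Using the third Newton equation $\tau H(\vx)\Delta\vx + \Delta\vs = -(\vs+\tau g(\vx))$, I will rewrite
\[
\vs^+ + \tau g(\vx^+) = \tau\bigl[g(\vx^+)-g(\vx)-H(\vx)(\vx^+-\vx)\bigr] + (1-\alpha)\bigl(\vs+\tau g(\vx)\bigr),
\]
where $\vx^+-\vx=\alpha\Delta\vx$. Since $\|\vx^+-\vx\|_\vx=\alpha\|\Delta\vx\|_\vx\leq\alpha\eta\leq\eta<1$, we have $\vx^+\in\K^\circ$ and can use \ref{prop:scb_norm2} with $\vu:=\vx$ to bound the Taylor-remainder bracket in the norm at $\vx$ by $\alpha^2\eta^2/(1-\alpha\eta)$. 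To transfer this to the norm at $\vx^+$, I will apply \ref{prop:scb_norm1}, which yields the factor $1/(1-\alpha\eta)$. Applying the same conversion to the second term and using the neighborhood hypothesis $\|\vs+\tau g(\vx)\|^*_\vx\leq\eta\tau$, the first piece is bounded by
\[
\tau\,\frac{\alpha^2\eta^2}{(1-\alpha\eta)^2}+(1-\alpha)\,\frac{\eta\tau}{1-\alpha\eta}.
\]

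Dividing by $\tau^+=(1-\vartheta)\tau$, combining with the earlier bound of $\eta/2$, and using $\alpha\eta\leq\eta\leq 1/4$, it remains to verify the elementary scalar inequality
\[
\frac{1}{1-\vartheta}\left[\frac{\alpha^2\eta^2}{(1-\alpha\eta)^2}+\frac{(1-\alpha)\eta}{1-\alpha\eta}\right]+\frac{\eta}{2}\leq 2\eta \quad\text{for all }\alpha\in(0,1],
\]
which I will check by bounding each term monotonically in $\alpha$ (the worst case is small $\alpha$, where the middle term tends to $\eta$ and the first term vanishes, giving a total of at most $3\eta/2<2\eta$).

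The main obstacle is bookkeeping: the norm in the conclusion is at $\vx^+$, while both \ref{prop:scb_norm2} and the neighborhood hypothesis are stated at $\vx$, so every term incurs a norm-change factor $1/(1-\alpha\eta)$. Once the decomposition above is in place, the rest is routine; the factor of $2$ in $2\eta$ is precisely the slack needed to absorb the residual $(1-\alpha)(\vs+\tau g(\vx))$ term that is absent when $\alpha=1$.
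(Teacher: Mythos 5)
Your proposal follows essentially the same route as the paper's proof: the same decomposition of $\vs^+ + \tau g(\vx^+)$ via the third Newton equation into a Taylor remainder plus the residual $(1-\alpha)(\vs+\tau g(\vx))$, the same use of \ref{prop:scb_norm2} and \ref{prop:scb_norm1} to land on the bound $\tau\bigl[\tfrac{(1-\alpha)\eta}{1-\alpha\eta}+\tfrac{\alpha^2\eta^2}{(1-\alpha\eta)^2}\bigr]$, and the same treatment of the $\vartheta\tau g(\vx^+)$ term. One caution on the final scalar step: if you literally bound the two $\alpha$-dependent terms \emph{separately} by their individual maxima over $\alpha\in(0,1]$ (the first is increasing, the second decreasing), you get $\tfrac{\eta^2}{(1-\eta)^2}+\eta$, and at $\eta=1/4$ the resulting total $\tfrac{1}{1-\vartheta}\bigl(\tfrac{\eta^2}{(1-\eta)^2}+\eta\bigr)+\tfrac{\eta}{2}\approx 2.04\eta$ narrowly exceeds $2\eta$; you must instead observe, as the paper does, that the \emph{sum} $\tfrac{(1-\alpha)\eta}{1-\alpha\eta}+\tfrac{\alpha^2\eta^2}{(1-\alpha\eta)^2}$ is decreasing in $\alpha$ for $\eta\leq 1/4$ and hence bounded by its limit $\eta$ as $\alpha\to 0$, which gives $\tfrac{1}{1-\vartheta}\eta+\tfrac{\eta}{2}\leq\tfrac{47}{30}\eta<2\eta$ — consistent with your parenthetical identification of small $\alpha$ as the worst case.
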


\begin{proof}
    Using \ref{prop:scb_norm1} and \ref{sd_prop:second} of Lemma \ref{lemma:sd_properties}, we have
\begin{equation} \label{eq:damped_xpnorm}
\| \mathbf{s}^+ + \tau g(\mathbf{x}^+) \|_{\mathbf{x}^+}^* \leq \frac{\| \mathbf{s}^+ + \tau g(\mathbf{x}^+) \|_{\mathbf{x}}^*}{1 - \alpha \eta},
\end{equation}
as $\| \mathbf{x}^+ - \mathbf{x} \|_{\mathbf{x}} = \alpha \|\Delta \mathbf{x}\|_{\mathbf{x}} \leq \alpha \eta < 1$ by part \ref{sd_prop:second} of Lemma \ref{lemma:sd_properties} and $\eta \leq 1/4$.

Applying \ref{prop:scb_norm2} and \ref{sd_prop:second} of Lemma \ref{lemma:sd_properties} again gives
\begin{equation} \label{eq:g_exp_bound}
\| g(\mathbf{x}^+) - g(\mathbf{x}) - \alpha H(\mathbf{x}) \Delta \mathbf{x} \|_{\mathbf{x}}^* \leq \frac{\alpha^2 \eta^2}{1 - \alpha \eta}.
\end{equation}

Using (\ref{eq:g_exp_bound}), the third equation of \eqref{eq:NS} and the neighborhood definition, we can derive the following upper bound:
\[
\begin{aligned}
\| \mathbf{s}^+ + \tau g(\mathbf{x}^+) \|_{\mathbf{x}}^* &\leq \| \mathbf{s} + \alpha \Delta \mathbf{s} + \tau (g(\mathbf{x}) + \alpha H(\mathbf{x}) \Delta \mathbf{x}) \|_{\mathbf{x}}^* \\ &\qquad\qquad + \tau \| g(\mathbf{x}^+) - g(\mathbf{x}) - \alpha H(\mathbf{x}) \Delta \mathbf{x} \|_{\mathbf{x}}^* \\
&= \| \mathbf{s} + \tau g(\mathbf{x}) + \alpha (\Delta \mathbf{s} + \tau H(\mathbf{x}) \Delta \mathbf{x}) \|_{\mathbf{x}}^* \\ &\qquad\qquad + \tau \| g(\mathbf{x}^+) - g(\mathbf{x}) - \alpha H(\mathbf{x}) \Delta \mathbf{x} \|_{\mathbf{x}}^* \\
&\stackrel{\eqref{eq:NS}}{=} (1 - \alpha) \| \mathbf{s} + \tau g(\mathbf{x}) \|_{\mathbf{x}}^* + \tau \| g(\mathbf{x}^+) - g(\mathbf{x}) - \alpha H(\mathbf{x}) \Delta \mathbf{x} \|_{\mathbf{x}}^* \\
&\stackrel{\eqref{eq:g_exp_bound}}{\leq} (1 - \alpha) \eta \tau + \tau \frac{\alpha^2 \eta^2}{1 - \alpha \eta} = \tau \left((1 - \alpha)\eta + \frac{\alpha^2 \eta^2}{1 - \alpha \eta} \right).
\end{aligned}
\]
From \eqref{eq:damped_xpnorm} and the previously derived upper bound it follows that
\begin{equation} \label{eq:2eta_xplusnorm}
\| \mathbf{s}^+ + \tau g(\mathbf{x}^+) \|_{\mathbf{x}^+}^* \stackrel{\eqref{eq:damped_xpnorm}}{\leq} \frac{\| \mathbf{s}^+ + \tau g(\mathbf{x}^+) \|_{\mathbf{x}}^*}{1 - \alpha \eta} \leq \tau \left[ \frac{(1 - \alpha) \eta}{1 - \alpha \eta} + \frac{\alpha^2 \eta^2}{(1 - \alpha \eta)^2} \right].
\end{equation}
Therefore,

\begin{gather}
\frac{1}{\tau^+} \| \mathbf{s}^+ + \tau^+ g(\mathbf{x}^+) \|_{\mathbf{x}^+}^* \leq \frac{1}{(1 - \vartheta) \tau} \left( \| \mathbf{s}^+ + \tau g(\mathbf{x}^+) \|_{\mathbf{x}^+}^* + \vartheta \tau \| g(\mathbf{x}^+) \|_{\mathbf{x}^+}^* \right) \notag \\
\stackrel{\eqref{eq:2eta_xplusnorm},\ref{prop:lhscb_norm}}{\leq} \frac{1}{1 - \vartheta} \left( \frac{(1 - \alpha) \eta}{1 - \alpha \eta} + \frac{\alpha^2 \eta^2}{(1 - \alpha \eta)^2} + \vartheta \sqrt{\nu} \right). \label{eq:damped_xpnorm2}
\end{gather}
In the analysis of Algorithm \ref{alg:simple_short_step}, we have $\eta \leq \frac{1}{4}$, and $
\vartheta = \frac{\eta/2}{\sqrt{\nu} + 1}$, thus,
\[
\frac{1}{1 - \vartheta} =  \frac{\sqrt{\nu} + 1}{\sqrt{\nu} + 1 - \eta/2} \leq \frac{2}{2 - \eta/2} \text{ and }
\frac{\vartheta \sqrt{\nu}}{1 - \vartheta} = \frac{\frac{\eta}{2} \sqrt{\nu}}{\sqrt{\nu} + 1 - \eta/2} \leq \frac{\eta}{2}.
\]
Using these inequalities and \eqref{eq:damped_xpnorm2}, we can derive the upper bound in the statement of the lemma:

\[
\begin{aligned}
\frac{1}{\tau^+} \| \mathbf{s}^+ + \tau^+ g(\mathbf{x}^+) \|_{\mathbf{x}^+}^*
 &\leq \frac{2}{2 - \eta/2} \left( \frac{(1 - \alpha) \eta}{1 - \alpha \eta} + \frac{\alpha^2 \eta^2}{(1 - \alpha \eta)^2} \right) + \frac{\eta}{2} \\
 &\leq \frac{2 \eta}{2-\eta/2} + \frac{\eta}{2} \leq 2 \eta,
\end{aligned}
\]
as $\frac{(1 - \alpha) \eta}{1 - \alpha \eta} + \frac{\alpha^2 \eta^2}{(1 - \alpha \eta)^2}$ is strictly decreasing in $\alpha$ if $\eta \leq \frac{1}{4}$ holds.
\end{proof}

\begin{lemma} \label{lemma:alg3_mu_bounds}
Suppose Algorithm \ref{alg:simple_short_step} is stopped after a damped Newton step of size $\alpha \in (0,1]$, following a series of full Newton steps. Then the final iterate satisfies
\[
\left( 1 - \frac{\eta^2}{\nu} \right) \tau \leq \mu \leq \frac{1}{1 - \vartheta} \tau.
\]
\end{lemma}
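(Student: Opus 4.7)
The strategy is to mirror the proof of Lemma \ref{lemma:DG}, keeping the step size $\alpha$ as a variable and tracking the duality gap entering the damped iteration, and then to close the argument by applying Lemma \ref{lemma:DG} itself to the preceding full Newton step.

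The first step is an exact identity for the duality gap after the damped step. Expanding $(\vx^+)^\T\vs^+ = (\vx+\alpha\Delta\vx)^\T(\vs+\alpha\Delta\vs)$ and using the orthogonality $\Delta\vx^\T\Delta\vs=0$ from part \ref{sd_prop:first} of Lemma \ref{lemma:sd_properties} gives $(\vx^+)^\T\vs^+ = \vx^\T\vs + \alpha(\vs^\T\Delta\vx + \vx^\T\Delta\vs)$. Substituting $\Delta\vs$ from the third equation of \eqref{eq:NS} and using \ref{prop:lhscb_gradient} and \ref{prop:lhscb_Hessian} exactly as in the proof of Lemma \ref{lemma:DG}, the middle term evaluates to $\tau(\nu - \|\Delta\vx\|_\vx^2) - \vx^\T\vs$, which gives
\[
\mu \;=\; (1-\alpha)\mu_{\text{old}} \;+\; \alpha\tau\!\left(1 - \frac{\|\Delta\vx\|_\vx^2}{\nu}\right),
\]
where $\mu_{\text{old}} := \vx^\T\vs/\nu$ is the duality gap at the start of the damped iteration. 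Unlike the clean cancellation of $\vx^\T\vs$ that occurs when $\alpha=1$ in Lemma \ref{lemma:DG}, the new $\mu$ depends explicitly on $\mu_{\text{old}}$, so I will need to bound $\mu_{\text{old}}$ separately.

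For this second ingredient, note that the iteration immediately preceding the damped one was, by assumption, a full Newton step whose Newton system used $\tau_{\text{prev}} = \tau/(1-\vartheta)$. Applying Lemma \ref{lemma:DG} to that preceding iteration yields
\[
\frac{(1-\eta^2/\nu)\tau}{1-\vartheta} \;\leq\; \mu_{\text{old}} \;\leq\; \frac{\tau}{1-\vartheta}.
\]

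Combining these with $\|\Delta\vx\|_\vx \leq \eta$ from part \ref{sd_prop:second} of Lemma \ref{lemma:sd_properties}, the upper bound follows from
\[
\mu \;\leq\; (1-\alpha)\frac{\tau}{1-\vartheta} + \alpha\tau \;=\; \frac{1-\alpha\vartheta}{1-\vartheta}\,\tau \;\leq\; \frac{\tau}{1-\vartheta},
\]
and the lower bound from the analogous chain
\[
\mu \;\geq\; (1-\alpha)\frac{(1-\eta^2/\nu)\tau}{1-\vartheta} + \alpha\!\left(1-\frac{\eta^2}{\nu}\right)\!\tau \;=\; \frac{1-\alpha\vartheta}{1-\vartheta}\!\left(1-\frac{\eta^2}{\nu}\right)\!\tau \;\geq\; \left(1-\frac{\eta^2}{\nu}\right)\tau,
\]
where the last inequality uses $(1-\alpha\vartheta)/(1-\vartheta)\geq 1$ for $\alpha\in(0,1]$. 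There is no real conceptual obstacle here; the main thing to handle with care is the bookkeeping of which $\tau$ value belongs to which iteration's Newton system, so that Lemma \ref{lemma:DG} is invoked with $\tau_{\text{prev}} = \tau/(1-\vartheta)$ for the preceding full step and the weighted-average structure of the identity for $\mu$ collapses cleanly to the stated bounds.
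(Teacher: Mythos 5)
Your proposal is correct and follows essentially the same route as the paper: expand $(\vx^+)^\T\vs^+$ into the weighted average $(1-\alpha)\,\vx^\T\vs + \alpha\tau(\nu-\|\Delta\vx\|_\vx^2)$ using orthogonality and the third Newton equation, then bound the incoming duality gap by applying Lemma \ref{lemma:DG} to the preceding full step with $\tau_{\mathrm{prev}}=\tau/(1-\vartheta)$. The bookkeeping of which $\tau$ belongs to which iteration is handled exactly as in the paper, and the explicit intermediate quantity $(1-\alpha\vartheta)/(1-\vartheta)$ is just a slightly more detailed writing of the same final inequalities.
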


\begin{proof}
   Let us denote the penultimate iterate by $(\mathbf{x}, \vy, \mathbf{s})$ and the last iterate by
   \[(\mathbf{x}^+, \vy^+, \mathbf{s}^+) = (\mathbf{x}, \vy, \mathbf{s}) + \alpha (\Delta\vx,\Delta\vy,\Delta\vs).\]
The final duality gap is
$
\mu = \frac{(\mathbf{x}^+)^\top \mathbf{s}^+}{\nu}.
$
Proceeding analogously to the proof of Lemma \ref{lemma:DG}, we can write
\[
(\mathbf{x}^+)^\top \mathbf{s}^+ = \mathbf{x}^\top \mathbf{s} - \alpha \tau \|\Delta \mathbf{x}\|_{\mathbf{x}}^2 - \alpha \mathbf{x}^\top \mathbf{s}
+ \alpha \tau \nu = (1 - \alpha) \mathbf{x}^\top \mathbf{s} + \alpha \tau \left( \nu - \|\Delta \mathbf{x}\|_{\mathbf{x}}^2 \right).
\]
From Lemma \ref{lemma:DG}, after the penultimate step we have
\begin{equation} \label{eq:dgbound}
\frac{\tau}{1-\vartheta} (\nu-\eta^2) \leq \mathbf{x}^\top \mathbf{s} \leq \frac{\tau}{1-\vartheta} \nu.
\end{equation}
Using (\ref{eq:dgbound}) and \ref{sd_prop:second} from Lemma \ref{lemma:sd_properties}, we can derive the desired lower and an upper bounds on $\mu$:
\[
\mu = \frac{(\mathbf{x}^+)^\top \mathbf{s}^+}{\nu} \leq \frac{(1 - \alpha) \tau }{1 - \vartheta} + \alpha \tau \leq \frac{\tau}{1 - \vartheta},
\]
and
\[
\mu = \frac{(\mathbf{x}^+)^\top \mathbf{s}^+}{\nu} \geq \frac{(1 - \alpha) \tau \left( 1 - \frac{\eta^2}{\nu} \right)}{1 - \vartheta} + \alpha \tau \left( 1 - \frac{\eta^2}{\nu} \right) \geq \tau \left( 1 - \frac{\eta^2}{\nu} \right). \qedhere
\]
\end{proof}

\end{document}